\documentclass[11pt,a4paper]{amsart}  % AMS article document class, 12pt font
\usepackage[top=1in, bottom=1in, left=1in, right=1in]{geometry}  % Page margins
\usepackage{amsmath,amssymb,amsfonts,amscd,amsxtra,amsthm}  % AMS mathematical packages
\usepackage{mathtools,mathrsfs}  % Enhanced math tools and script fonts
\usepackage{graphicx,subfig,svg}  % Graphics and sub-figures
\usepackage{latexsym}  % LaTeX symbols
\usepackage{xcolor,color}  % Color support
\usepackage{epstopdf}  % EPS to PDF conversion
\usepackage{tikz, tikz-cd}  % TikZ graphics and commutative diagrams
\usepackage{bm, bbm}  % Bold math and blackboard bold fonts
\usepackage{enumitem}  % Customized lists
\usepackage{cite}  % Bibliography management
\usepackage{caption}  % Figure and table captions
\usepackage{multirow}  % Table multirow cells
\usepackage{algorithmic}  % Algorithmic environment
\usepackage{algorithm}  % Algorithm floating environment
\usepackage{rotating}  % Rotated content (sideways tables/figures)
\usepackage{verbatim}

\usepackage[toc,page]{appendix}  % Appendix support
\usepackage[colorlinks=true, linkcolor=blue, citecolor=magenta]{hyperref}  % Hyperlinks

\numberwithin{equation}{section}  % Number equations within sections

\usepackage{cleveref}  % Cross-referencing with lowercase labels

\newtheorem{theorem}{Theorem}[section]
\newtheorem{lemma}{Lemma}[section]

\newtheorem{proposition}{Proposition}[section]
\newtheorem{definition}{Definition}[section]
\newtheorem{assumption}{Assumption}[section]
\newtheorem{remark}{Remark}[section]

\crefname{theorem}{Theorem}{Theorems}
\Crefname{theorem}{Theorem}{Theorems}

\crefname{lemma}{Lemma}{Lemmas}
\Crefname{lemma}{Lemma}{Lemmas}

\crefname{corollary}{Corollary}{Corollaries}
\Crefname{corollary}{Corollary}{Corollaries}

\crefname{proposition}{Proposition}{Propositions}
\Crefname{proposition}{Proposition}{Propositions}

\crefname{definition}{Definition}{Definitions}
\Crefname{definition}{Definition}{Definitions}

\crefname{assumption}{Assumption}{Assumptions}
\Crefname{assumption}{Assumption}{Assumptions}

\crefname{remark}{Remark}{Remarks}
\Crefname{remark}{Remark}{Remarks}

\crefname{example}{Example}{Examples}
\Crefname{example}{Example}{Examples}

\crefname{section}{Appendix}{Appendices}
\DeclareMathOperator{\divg}{div}

\newcommand{\bR}{\mathbb{R}}

\newcommand{\velocity}{\theta}

\graphicspath{{pics/}} % Folder for images (ensure 'pics' directory exists)

\title[Broadband Superabsorption of Waves by Metascreens]{
Reduced Order Model for Broadband Superabsorption of Waves by Metascreens}

\author{Habib Ammari}
\address{(H. Ammari) Department of Mathematics, ETH Z\"urich, Rämistrasse 101, 8092 Z\"urich, Switzerland}
\email{habib.ammari@math.ethz.ch}

\author{Yu Gao}
\address{(Y. Gao) Department of Mathematics, ETH Z\"urich, Rämistrasse 101, 8092 Z\"urich, Switzerland}
\email{yugaoy@ethz.ch, yugaomath@gmail.com}

\author{Lara Vrabac}
\address{(L. Varbac) Department of Mathematics, ETH Z\"urich, Rämistrasse 101, 8092 Z\"urich, Switzerland}
\email{lvrabac@student.ethz.ch}

\begin{document}

\begin{comment}
\begin{abstract}
    This work presents a new design to achieve the broadband absorption of acoustic waves in the low-frequency regime. Using a thin coating composed of a graded system of subwavelength acoustic resonators repeated periodically on a reflective surface, we show both analytically and numerically that the structure behaves as an equivalent absorbing surface over a broadband of frequencies with wavelengths much larger than the size of the unit cell. We also optimize the absorption coefficient in terms of the shapes of the resonators.
\end{abstract}
\end{comment}

\begin{abstract}
    This work presents a new design for broadband absorption of low-frequency acoustic waves using a thin coating made of subwavelength acoustic resonators arranged periodically on a reflective surface. We first study the associated  scattering problem and the corresponding subwavelength resonance problem, and then derive analytical approximations for the resonant frequencies and the reflection coefficient in terms of the periodic capacitance matrix in a half-space with a Dirichlet boundary condition. These approximations yield an effective macroscopic description of the coating via an impedance boundary condition and clarify the mechanism of superabsorption through an approximate coupling condition. Moreover, they lead to a reduced order model that enables efficient evaluation of the scattered waves over a frequency band and accelerates broadband absorption design. Building on this reduced order model, we develop a gradient based shape optimization method using shape derivatives of the resonant quantities to achieve broadband absorption. Numerical experiments demonstrate the broadband performance and the effectiveness of the proposed design procedure.
\end{abstract}

\maketitle

\vspace{0.5cm}
    \noindent 
    \textbf{Keywords:} Reduced order model, optimal design, superabsorption, metascreen, capacitance matrix, Minnaert resonance, shape derivative, reflection coefficient\\
    
    \noindent \textbf{AMS Subject classifications:} 35R30, 35C20, 35P20, 35B27\\

%\tableofcontents

%%%%%%%%%%%%%%%%%%%%%%%%%%%%%%%%%%%%%%%%%%%%%%%%%%%%%%%%%%%%%%%%%%

\section{Introduction}

Sound absorption plays a central role in architectural and room acoustics, environmental and industrial noise remediation, and underwater acoustics, where it supports radiated noise reduction and underwater acoustic stealth. It has therefore been the subject of sustained research and broad interest for decades. There is an increasing demand for broadband absorbers because many real world noise sources are spectrally rich and operate under variable conditions. However, achieving high absorption over a broad band, especially at low frequencies, remains challenging because intrinsic material dissipation is weak in this regime \cite{sheng2015,sheng2012} and because passive absorbers are constrained by fundamental thickness bandwidth limits \cite{yang2017sound}. In recent decades, the use of acoustic metamaterials has opened up new possibilities to solve this problem \cite{review2020}, offering practical ways to improve  noise reduction \cite{noisereduction} and, in particular, the accuracy of underwater acoustic research \cite{leroy}. The aim of this paper is to provide a mathematical and numerical framework for the analysis and optimal design of acoustic metascreens that exhibit a broadband low-frequency sound absorption.

\subsection{Problem statement and background}

In this subsection, we first describe the mathematical setting of the model and introduce the notation that will be used in the sequel. For a point $x \in \bR^d$ with $d = 2$ or $3$, we denote $x = (x_{\ell}, x_d)$, where $x_{\ell} \in \mathbb{R}^{d-1}$ represents the first $(d - 1)$ lattice coordinates, and $x_d \in \bR$ denotes the last coordinate. The upper half-space is given by $\mathbb{R}^d_{+} := \{ x \in \mathbb{R}^d \mid x_d > 0 \}$, and the reflective plane is defined as $\partial \mathbb{R}^{d}_{+}:= \{ x \in \mathbb{R}^d \mid x_d = 0 \}$. Let $\Lambda := L\mathbb{Z}^{d-1}$ be a $(d-1)$ dimensional square lattice with unit cell $Y = [-L/2, L/2]^{d-1}$ for a given $L>0$. Furthermore, for later use, we define the fundamental domains $Y_{\infty}$ and $Y_h$ as $Y_{\infty} := Y \times \mathbb{R}_{+}$ and $Y_h := Y \times (0, h)$, respectively. Finally, the dual lattice $\Lambda^*$ is defined by $\Lambda^* := (2\pi / L)\Lambda$.

In this study, we consider the periodic Helmholtz scattering problem \eqref{eq: model} for a coating mounted on a reflective (Dirichlet) surface. The coating is formed by periodically repeating a unit cell that contains $N$ subwavelength resonators with high-contrast material parameters compared to those of the background medium (cf. \Cref{fig:metascreen}). Let $D_1,\ldots,D_N \subset \mathbb{R}^d$ be disjoint, connected domains contained in the unit cell domain $Y_{\infty}$, and assume that each boundary $\partial D_i$ is of class $C^2$. We denote the union of resonators in one unit cell by $D := \bigcup_{i=1}^N D_i$. Given a lattice $\Lambda$, the periodically repeated $i$\textsuperscript{th} resonator and the fully periodic set of resonators are defined by $\mathcal{D}_i := \bigcup_{\xi\in\Lambda}\bigl(D_i+(\xi,0)\bigr)$ and $\mathcal{D} := \bigcup_{i=1}^N \mathcal{D}_i$.
\begin{figure}[htbp]
    \centering
    \includegraphics[width=0.9\textwidth]{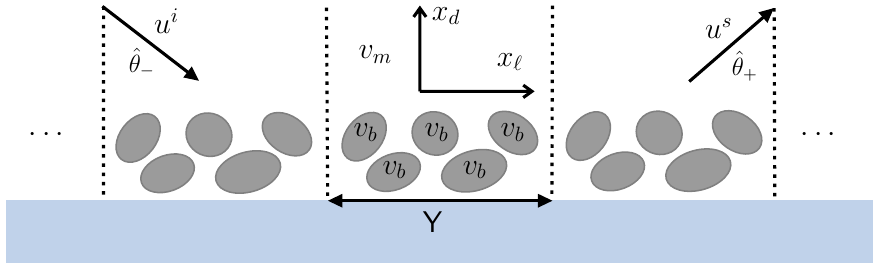}
    \caption{Illustration of a metascreen above a reflective surface.}
    \label{fig:metascreen}
\end{figure}

The material parameters (the density and the bulk modulus) inside the inclusions $D$ are denoted by $\rho_b$ and $\kappa_b$, whereas those in the surrounding medium are $\rho_m$ and $\kappa_m$. The corresponding wave speeds are given by $v_b = \sqrt{\kappa_b / \rho_b}$ and $v_m = \sqrt{\kappa_m / \rho_m}$. We assume that $\rho_m$ and $\kappa_m$ are given positive constants. The associated wave numbers and the dimensionless contrast parameter are defined as $k_m = \omega / v_m$, $k_b = \omega / v_b$, and $\delta = \rho_b / \rho_m$. We also assume that $\delta \ll 1$, with $v_b = \mathcal{O}(1)$ and $v_m = \mathcal{O}(1)$, ensuring a high contrast regime characteristic of subwavelength behavior. 

Let $\hat{\theta}_{\pm} \in \mathbb{R}^{d}$ be unit vectors satisfying  $|\hat{\theta}_{\pm}|=1$, defined by  $\hat{\theta}_{\pm}=(\hat{\theta}_{\ell}, \pm \hat{\theta}_{d})$ with $\hat{\theta}_{d} > 0$, where $\hat{\theta}_{\ell}$ denotes the components of $\hat{\theta}$ corresponding to the first $(d-1)$ lattice coordinates. The incident field is given by  $u^{i}(x)=e^{\mathrm{i}k_m \hat{\theta}_{-} \cdot x}$,  where $\hat{\theta}_{-}$ represents the incident direction. Since its last component is negative, the corresponding plane wave propagates downward, i.e., it is incident from above. The scattering problem of interest is governed by the system
\begin{equation}\label{eq: model}
    \begin{cases}
    \Delta u + k_m^2 u = 0 &  \text{ in } \mathbb{R}_+^d \setminus \mathcal{D}, \\[1mm]
    \Delta u + k_b^2 u = 0 & \text{ in } \mathcal{D},  \\[1mm]
    \left. u \right|_+ - \left. u \right|_- = 0 &  \text{ on } \partial \mathcal{D}, \\[1mm]
    \displaystyle \frac{1}{\rho_m} \left. \frac{\partial u}{\partial \nu} \right|_+ 
    - \frac{1}{\rho_b}\left. \frac{\partial u}{\partial \nu} \right|_- = 0 &
   \text{ on } \partial \mathcal{D}, \\[1mm]
     u = 0 & \text{ on } \partial \mathbb{R}^{d}_{+},\\[1mm]
    u^s(x) := u(x) - u^{i}(x) &\text{ satisfies the radiation condition as } x_d \to +\infty.
    \end{cases}
\end{equation}
Here, the subscripts $+$ and $-$ denote the limits of $u$ taken from the exterior and interior of $\mathcal{D}$, respectively. The outgoing radiation condition is formulated via the Rayleigh--Bloch expansion \cite{ammari2018mathematical,botten2013electromagnetic}. We seek a solution $u$ that is $\alpha$-quasiperiodic in $x_\ell$, with $\alpha = k_m \hat{\theta}_\ell$, i.e.,
\begin{equation*}
    u(x + (\xi, 0)) = e^{\mathrm{i}\alpha\cdot \xi}u(x), \quad \xi \in \Lambda.
\end{equation*}

The objective is to design an array of resonators arranged within a unit cell such that the reflectance  remains very small over a prescribed broadband range $[\omega_{\min},\omega_{\max}]$, which is equivalent to achieving near superabsorption across that band. This design task poses significant computational challenges. First, the governing scattering response reflectance  is frequency dependent, so broadband objectives require solving the forward problem at many discrete frequency samples. Second, since each unit cell comprises multiple resonators, multiple scattering interactions must be resolved, and the computational cost increases rapidly with the number of inclusions \cite{martin2006multiple}. Third, within an integral equation formulation, the evaluation of quasi-periodic Green’s functions can be expensive \cite{linton2010lattice,capolino2005efficient}, especially for repeated solvers across frequencies and geometries. Finally, the design task involves shape optimization over many geometric degrees of freedom, leading to a high dimensional, tightly coupled problem with substantial overall computational cost.

Recently, broadband optimal design of periodic materials has attracted increasing attention. For instance, researchers have proposed reduced basis strategies leveraging physics informed initial guesses \cite{gao2025optimal}, model order reduction techniques coupled with finite element and boundary element solvers \cite{chen2024broadband}, and multiscale approaches based on topological derivatives \cite{novotny2025topological}, all aimed at accelerating broadband computations. Despite significant advances in the experimental realization and numerical design of acoustic metamaterials, a rigorous mathematical understanding of subwavelength resonant scattering in high contrast metascreen has been developed primarily for configurations with one or two resonators \cite{ammari2021bound,ammari2017mathematical}. More recently, building on the capacitance matrix methodology developed in \cite{ammari2024functional}, we propose a reduced order model based on the capacitance matrix for the broadband design of periodic structures with multiple resonators.

\subsection{Main results and outline}

We study the associated resonance problem, that is, the configuration without an incident field $u^i=0$. Using the layer potential techniques developed in Section~\ref{sec:preliminaries}, the scattering solution admits the representation \eqref{eq: integral formulation of u}, which reduces the problem to the nonlinear resonance condition $\mathcal{F}(\omega)=0$ in \eqref{eq: simplied integral equation}. In the subwavelength regime, the scattering resonances can be characterized in terms of the periodic capacitance matrix introduced in \Cref{def:periodic_capacitance}. We then derive asymptotic expansions for the $N$ subwavelength resonant frequencies $\omega_j(\delta)$ with respect to the high contrast parameter $\delta$, and show that they satisfy $\Im\,\omega_j(\delta)<0$, as stated in \Cref{thm: asymptotic of resonance frequency}.

Our primary focus is on approximating the total field, with particular emphasis on the reflection coefficient (cf. \Cref{thm: approximation of total field}). We show that the total field admits a decomposition into three components: the incident wave, the reflected propagating wave, and an evanescent contribution that decays exponentially away from the structure. The reflected propagating wave is characterized by a reflection coefficient $r(\omega)$, which can be expressed as a superposition of multiple resonant modes. In particular, we derive an analytical formula for the reflection coefficient of this structure in \eqref{eq: asymptotic of reflection coefficient}, which is closely related to the expressions obtained in \cite{lanoy2018broadband,leroy2015superabsorption}.

We show that a thin coating patterned with subwavelength resonators can have a macroscopic impact on acoustic scattering over a broadband range of low frequencies. More precisely, the resulting total field is asymptotically equivalent to the solution of a half-space scattering problem subject to an impedance boundary condition (cf. \Cref{prop:impedance}). When $\lambda(\omega)$ is real and approaches $\lambda_j$, the reflection coefficient satisfies $r(\omega)\to 1$, so the structure behaves effectively as a sound-hard surface, that is, it approaches a Neumann boundary condition. In contrast, when $\lambda(\omega)$ is complex and there exists a frequency $\omega^{*}$ such that $\Re \lambda(\omega^{*}) \approx \lambda_j$ and $\Im \lambda(\omega^{*}) \approx \omega^{*}\lambda_{j,1}$, the reflection coefficient approaches zero, resulting in superabsorption near $\omega^{*}$. In this regime, the effective boundary condition becomes absorbing (cf. \Cref{rem:absorbing boundary condition}).

The approximate reflection coefficient not only provides a mathematical justification for the superabsorption effect, but also yields a reduced order model for broadband optimal design. Since the dominant computations in this reduced order model are frequency independent, they can be performed once and reused across the entire frequency band. The approximation remains accurate in the low frequency broadband regime. Broadband absorption requires structures that support multiple resonant peaks whose low reflection regions overlap within the frequency band of interest. With this in mind, we introduce two objective functionals, one based on resonant frequencies and the other based on the reflection coefficient. Because the resonant frequencies depend sensitively on the geometry, we derive shape derivatives for the resonant quantities and for the reflection coefficient, which enables gradient based optimal design. A detailed analysis is provided in \Cref{thm: shape derivative of eigenvalue}.

The paper is organized as follows. In the next section, we review the necessary preliminaries, including the quasi-periodic Green function, the layer potential operators, and the capacitance matrix associated with a half space resonance problem with a Dirichlet boundary condition. In Section~\ref{sec:approximation}, we analyze the subwavelength resonances and derive an asymptotic characterization of the reflection coefficient. Section~\ref{sec:broabband} is devoted to shape sensitivity analysis and the development of an optimal design algorithm for broadband absorption. Finally, extensive numerical results are presented in Section~\ref{sec:numerics} to illustrate the efficiency of the proposed design.

\section{Preliminaries}\label{sec:preliminaries}

In this section, we briefly review the quasi-periodic Green function, the layer potential operators, and the capacitance matrix that will be used in the subsequent analysis.

\subsection{Quasi-periodic Green's function}

In this subsection, we review the results on the quasi-periodic Green function established in \cite{ammari2017mathematical}. The quasi-periodic Green function $G_{\#}^{\alpha,k}(x)$ is defined as the solution to
\begin{align*}
    \Delta G_{\#}^{\alpha,k}(x) + k^2 G_{\#}^{\alpha,k}(x) 
    = \sum_{\xi \in \Lambda} \delta(x - (\xi, 0)) e^{\mathrm{i} \alpha \cdot \xi},
\end{align*}
where $\delta(x)$ denotes the Dirac delta distribution. It admits the explicit representation
\begin{align*}
    G_{\#}^{\alpha,k}(x) 
    = \sum_{\eta \in \Lambda^*} 
    \frac{e^{\mathrm{i} (\alpha + \eta) \cdot x_\ell} 
    e^{\mathrm{i} \sqrt{k^2 - |\alpha + \eta|^2} |x_d|}}
    {2 \mathrm{i} |Y| \sqrt{k^2 - |\alpha + \eta|^2}}.
\end{align*}
If $|\alpha| < k < \inf_{q \in \Lambda^* \setminus \{0\}} |\alpha + q|$, the Green function can be decomposed as
\begin{align}\label{eq: quasi-Green's fun}
    G_{\#}^{\alpha,k}(x)
    = \frac{e^{\mathrm{i} \alpha \cdot x_\ell} e^{\mathrm{i} \sqrt{k^2 - \alpha^2} |x_d|}}
    {2 \mathrm{i} \sqrt{k^2 - \alpha^2} |Y|}
    - \sum_{\eta \in \Lambda^* \setminus \{0\}}
    \frac{e^{\mathrm{i} (\alpha + \eta) \cdot x_\ell}
    e^{-\sqrt{|\alpha + \eta|^2 - k^2} |x_d|}}
    {2 |Y| \sqrt{|\alpha + \eta|^2 - k^2}}.
\end{align}
In the special case $\alpha = k = 0$, we denote the Green function by
\begin{align*}
    G_{\#}^{0,0}(x)
    = \frac{|x_d|}{2|Y|}
    - \sum_{\eta \in \Lambda^* \setminus \{0\}}
    \frac{e^{\mathrm{i} \eta \cdot x_\ell} e^{-|\eta| |x_d|}}
    {2 |Y| |\eta|},
\end{align*}
which serves as the fundamental solution to
\begin{align*}
    \Delta G_{\#}^{0,0}(x)
    = \sum_{\xi \in \Lambda} \delta(x - (\xi, 0)).
\end{align*}

We now introduce the low-frequency scaling, $\alpha = \omega a$ and  $\sqrt{k^2 - |\alpha|^2} = \omega \tau$, where $a$ is a fixed vector and $\tau$ is a scalar. Following \cite{ammari2017mathematical}, the Green function admits the asymptotic expansion with respect to $\omega$,
\begin{align}\label{eq: asymptotic of Green's fun}
    G_{\#}^{\alpha,k}(x) 
    = \sum_{n=-1}^{+\infty} \omega^n G^{a,\tau}_{\#,n}(x).
\end{align}
In particular, for $n = -1, 0,$ and $1$, we have the explicit formula
\begin{align*}
    G^{a,\tau}_{\#,-1}(x) = \frac{1}{2 \mathrm{i} \tau |Y|}, \
    G^{a,\tau}_{\#,0}(x) = G_{\#}^{0,0}(x) + \frac{a \cdot x_\ell}{2\tau|Y|}, \
    G^{a,\tau}_{\#,1}(x) = \frac{\mathrm{i}(a \cdot x_\ell + \tau|x_d|)^2}{4\tau|Y|} 
    + \mathrm{i} a \cdot g_{\#,1}(x),
\end{align*}
where $g_{\#,1}$ is a real vector-valued function independent of $a$ and $\tau$. Moreover, it satisfies the properties $g_{\#,1}(-x_\ell, x_d)=-g_{\#,1}(x_\ell, x_d)$  and $g_{\#,1}(x_\ell, -x_d)=g_{\#,1}(x_\ell, x_d)$. To derive the higher order terms, we observe that
\begin{align*}
    \left( \Delta + \frac{\omega^2}{v^2} \right) G_{\#}^{\alpha,k}(x) 
    = \sum_{\xi \in \Lambda} e^{\mathrm{i} \omega a \cdot \xi} \delta(x - (\xi, 0)) 
    = \sum_{n=0}^{+\infty} \omega^n 
    \sum_{\xi \in \Lambda} \frac{(\mathrm{i} a \cdot \xi)^n}{n!} 
       \delta(x - (\xi, 0)).
\end{align*}
Substituting the expansion \eqref{eq: asymptotic of Green's fun} into the above equation and matching coefficients of equal powers of $\omega$, we obtain the recurrence relations
\begin{align*}
    %\Delta G^{a,\tau}_{\#,0}(x) = \sum_{\xi \in \Lambda} \delta(x - (\xi, 0)), \
    \Delta G^{a,\tau}_{\#,n}(x) + \frac{1}{v^2} G^{a,\tau}_{\#,n-2}(x) 
    = \sum_{\xi \in \Lambda} \frac{(\mathrm{i} a \cdot \xi)^n}{n!} 
       \delta(x - (\xi, 0)), \ n \geq 1.
\end{align*}

Next, we introduce the quasi-periodic Green function associated with the sound-soft (Dirichlet) boundary condition, defined by
\begin{align*}
    G_s^{\alpha,k}(x, y) = G_{\#}^{\alpha,k}(x - y) - G_{\#}^{\alpha,k}(x - y^*), 
\end{align*}
where $y^* = (y_\ell, -y_d)$.  
Using the asymptotic expansion \eqref{eq: asymptotic of Green's fun}, we similarly obtain
\begin{align}\label{eq: asymptotic of soft Green's fun}
    G_{s}^{\alpha,k}(x, y) 
    = \sum_{n=-1}^{+\infty} \omega^n\, G_{s,n}^{a,\tau}(x, y),
\end{align}
with $G_{s,n}^{a,\tau}(x, y) = G_{\#,n}^{a,\tau}(x - y) - G_{\#,n}^{a,\tau}(x - y^*)$. Then we have
\begin{gather*}
    G_{s,-1}^{a,\tau}(x, y) = 0, 
    \quad G_{s,0}^{a,\tau}(x, y) = G_s^{0,0}(x, y), 
\end{gather*}
and hence $G_{s,0}^{a,\tau}(x, y)$ is independent of $a$ and $p$.  Furthermore, $G_{s,1}^{a,\tau}(x, y)$ can be written as
\begin{equation}\label{eq: Gs1}
    \begin{aligned}
        G_{s,1}^{a,\tau}(x, y) 
        = \frac{\mathrm{i}}{|Y|}\, \hat{G}_{s,1}^{a}(x, y) 
        - \frac{\mathrm{i} \tau}{|Y|} x_d y_d,
    \end{aligned}
\end{equation}
where $\hat{G}_{s,1}^{a}(x, y) =
a \cdot (x_\ell - y_\ell)\big(|x_d - y_d| - |x_d + y_d|\big)/2 
        + \, a \cdot g_{s,1}(x, y)\in \mathbb{R}$ depends only on $a$ and satisfies the anti-symmetry property $\hat{G}_{s,1}^{a}(x, y) = -\hat{G}_{s,1}^{a}(y, x)$.

\subsection{Layer potential techniques}

With the quasi-periodic Green’s function introduced in the previous subsection, we now define the associated layer potentials and boundary integral operators. The quasi-periodic single-layer potential $\mathcal{S}^{\alpha,k}_{D,s}$ associated with the sound-soft boundary condition is defined by
\begin{align*}
    \mathcal{S}^{\alpha, k}_{D, s}[\psi](x) 
    := \int_{\partial D} G^{\alpha, k}_{s}(x - y)\, \psi(y)\, \mathrm{d}\sigma(y), 
    \quad x \in \mathbb{R}^d,
\end{align*}
where $\psi \in L^2(\partial D)$ is the density function and $\mathrm{d}\sigma$ denotes the surface measure on $\partial D$.  The single-layer potential satisfies the following jump relations across the boundary
\begin{align}\label{eq: jump_formula}
    \left. \frac{\partial}{\partial \nu} 
    \mathcal{S}^{\alpha, k}_{D, s}[\psi] \right|_{\pm}(x)
    = \left( \pm \frac{1}{2}\mathcal{I}
    + (\mathcal{K}^{-\alpha, k}_{D, s})^* \right)[\psi](x), 
    \quad x \in \partial D,
\end{align}
where $\nu$ denotes the outward unit normal vector on $\partial D$. The associated boundary integral operator $(\mathcal{K}^{-\alpha,k}_{D,s})^*$ is given by
\begin{align*}
    (\mathcal{K}^{-\alpha, k}_{D, s})^*[\psi](x)
    := \int_{\partial D} 
    \frac{\partial G^{\alpha, k}_{s}(x - y)}{\partial \nu(x)}\, 
    \psi(y)\, \mathrm{d}\sigma(y),
    \quad x \in \partial D.
\end{align*}

According to the asymptotic expansion of the quasi-periodic Green’s function $G_{s}^{\alpha,k}(x,y)$ in~\eqref{eq: asymptotic of soft Green's fun}, the integral operators $\mathcal{S}^{\alpha,k}_{D,s}$ and $(\mathcal{K}^{-\alpha,k}_{D,s})^*$ admit the expansions
\begin{align}\label{eq: asymptotic of boundary operator}
    \mathcal{S}^{\alpha, k}_{D, s}
    = \sum_{n=-1}^{+\infty} \omega^n\, \mathcal{S}^{a,\tau}_{D, s, n},
    \quad
    (\mathcal{K}^{-\alpha, k}_{D, s})^*
    = \sum_{n=-1}^{+\infty} \omega^n\, (\mathcal{K}^{-a,\tau}_{D, s, n})^*,
\end{align}
where the operators $\mathcal{S}^{a,\tau}_{D,s,n}$ and $(\mathcal{K}^{-a,\tau}_{D,s,n})^*$ are associated with the kernel $G_{s,n}^{a,\tau}$. From the explicit expression of $G_{s,n}^{a,\tau}$, we have $\mathcal{S}^{a,\tau}_{D, s, -1}=0$ and $(\mathcal{K}^{-a,\tau}_{D, s, -1})^* =0$. Moreover, since $\mathcal{S}^{a,\tau}_{D,s,0}$ and $(\mathcal{K}^{-a,\tau}_{D,s,0})^*$ are independent of $a$ and $\tau$, we denote $\mathcal{S}^{0,0}_{D,s} := \mathcal{S}^{a,\tau}_{D,s,0}$ and $(\mathcal{K}^{0,0}_{D,s})^* := (\mathcal{K}^{-a,\tau}_{D,s,0})^*$, for simplicity and without ambiguity. Using the recurrence relations together with the divergence theorem, we obtain the following useful identities.
\begin{lemma}\label{lem: integral of Ksn}
For any $\psi \in L^2(\partial D)$, the following identities hold:
\begin{enumerate}
    \item[(i)] $\displaystyle 
        \int_{\partial D_i} 
        \left(-\frac{1}{2}\mathcal{I}
        + (\mathcal{K}^{0, 0}_{D, s})^*\right)[\psi]\, \mathrm{d}\sigma = 0,\quad  n=0$;
    \item[(ii)] $\displaystyle 
        \int_{\partial D_i} 
        (\mathcal{K}^{-a,\tau}_{D, s, n})^*[\psi]\, \mathrm{d}\sigma
        = -\frac{1}{v^2}
        \int_{D_i} \mathcal{S}^{a,\tau}_{D, s, n-2}[\psi]\, \mathrm{d}\sigma, 
        \quad n \geq 1.$
\end{enumerate}
\end{lemma}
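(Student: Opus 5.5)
The plan is to apply the divergence theorem inside $D_i$ to the functions $u_n := \mathcal{S}^{a,\tau}_{D,s,n}[\psi]$, and to use the recurrence relations satisfied by the kernels $G^{a,\tau}_{s,n}$. The key observation is that all lattice points $(\xi,0)$, and all reflected source points $y^*$ with $y\in\partial D$, lie outside $\overline{D_i}\subset Y_\infty$. Hence the Dirac masses on the right-hand side of the recurrence play no role for $x\in D_i$. Combined with the jump relation \eqref{eq: jump_formula}, this gives both identities.

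\emph{Step 1: the PDEs satisfied in $D_i$.} For $x\in D_i$ and $y\in\partial D$, both $x-y$ and $x-y^*$ avoid $\Lambda\times\{0\}$. The only exception is $x=y$, and this point does not occur in the open set $D_i$. The recurrence relations then give $\Delta_x G^{a,\tau}_{\#,n}(x-y) = -v^{-2}G^{a,\tau}_{\#,n-2}(x-y)$ for $n\ge 1$, and the same identity for $x-y^*$. Subtracting yields $\Delta_x G^{a,\tau}_{s,n}(x,y) = -v^{-2}G^{a,\tau}_{s,n-2}(x,y)$. For $n=0$, the equation $\Delta G^{0,0}_{\#}=\sum_\xi\delta$ gives $\Delta_x G^{0,0}_s(x,y)=0$ in $D_i$.

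Integrating against $\psi$, we obtain $\Delta \mathcal{S}^{0,0}_{D,s}[\psi]=0$ and $\Delta\mathcal{S}^{a,\tau}_{D,s,n}[\psi] = -v^{-2}\mathcal{S}^{a,\tau}_{D,s,n-2}[\psi]$ in $D_i$ for $n\ge1$. When $n=1$, the right-hand side involves $G_{s,-1}=0$, so the formula is consistent. Differentiating under the integral sign is justified because the kernel is smooth for $x$ in compact subsets of $D_i$ and $y\in\partial D$, and the lattice sums converge well (exponential decay of the evanescent modes).

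\emph{Step 2: the divergence theorem with interior traces.} For (i), apply the divergence theorem in $D_i$ and use the interior jump relation:
\[
\int_{\partial D_i}\Bigl(-\tfrac12\mathcal I+(\mathcal K^{0,0}_{D,s})^*\Bigr)[\psi]\,\mathrm d\sigma=\int_{\partial D_i}\frac{\partial \mathcal S^{0,0}_{D,s}[\psi]}{\partial\nu}\Big|_-\,\mathrm d\sigma=\int_{D_i}\Delta\mathcal S^{0,0}_{D,s}[\psi]\,\mathrm dx=0 .
\]
For (ii), with $n\ge1$ the kernel $G_{s,n}^{a,\tau}$ is no longer singular on the diagonal. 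Its normal derivative is continuous across $\partial D_i$, so there is no $\pm\frac12$ jump, and $\partial_\nu \mathcal S^{a,\tau}_{D,s,n}[\psi]|_- = (\mathcal K^{-a,\tau}_{D,s,n})^*[\psi]$. The divergence theorem then gives
\[
\int_{\partial D_i}(\mathcal K^{-a,\tau}_{D,s,n})^*[\psi]\,\mathrm d\sigma=\int_{D_i}\Delta \mathcal S^{a,\tau}_{D,s,n}[\psi]\,\mathrm dx=-\frac1{v^2}\int_{D_i}\mathcal S^{a,\tau}_{D,s,n-2}[\psi]\,\mathrm dx,
\]
which is the claimed identity (the measure on the right should be $\mathrm dx$).

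\emph{Main obstacle.} The delicate point is justifying the interior normal trace for $n\ge1$. The kernels $G_{s,n}^{a,\tau}$ are not $C^\infty$ across the diagonal. For example, $G_{\#,1}$ contains $|x_d-y_d|$-type terms, and higher terms contain $|x-y|$-type singularities coming from the expansion of the free-space part. I would handle this as follows. For $d=3$, the singular part of $G_{\#,n}$, $n\ge1$, is a multiple of $|x-y|^{n-1}$ plus smooth terms. For $n\ge1$ its gradient is bounded, or grows only logarithmically in $d=2$, so it is weakly singular. Consequently the double-layer-type jump vanishes, and $\mathcal S_{D,s,n}[\psi]\in C^1$ up to $\partial D_i$ for $\psi\in L^2$, via a density argument with continuous $\psi$. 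The $n=0$ case is just the standard jump relation \eqref{eq: jump_formula}, which is assumed.
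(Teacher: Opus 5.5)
Your proposal is correct and follows the route the paper indicates. The paper only says the identities follow from the recurrence relations together with the divergence theorem, and you supply exactly those details: $\Delta_x G^{a,\tau}_{s,n}(x,y)=-v^{-2}G^{a,\tau}_{s,n-2}(x,y)$ for $x\in D_i$, the interior normal trace (with $-\frac12\mathcal{I}$ present only at $n=0$), integration over $D_i$, and the correct observation that the measure on the right of (ii) should be $\mathrm{d}x$.

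Your ``main obstacle'' is milder than you suggest. For $n\ge 1$ the Dirac coefficient $(\mathrm{i}a\cdot\xi)^n/n!$ vanishes at $\xi=0$, so $G^{a,\tau}_{\#,n}$ has no fundamental-solution singularity at the origin. In fact the $|x_d|$ kinks in the explicit formula for $G^{a,\tau}_{\#,1}$ cancel against those of $g_{\#,1}$, and this is precisely why no $\pm\frac12$ jump appears.
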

Analogously to the boundary integral operators associated with the Laplace Green’s function in free space \cite{ammari2018minnaert}, similar results remain valid in the periodic setting in a half space resonance problem with a Dirichlet boundary condition.
\begin{lemma}\label{lem: properties of NP operator}
Consider a system of $N$ subwavelength resonators. Then, we have
\begin{enumerate}
    \item[(i)] The Laplace periodic single layer operator with sound-soft boundary condition
    $\mathcal{S}^{0, 0}_{D, s}: L^2(\partial D) \to H^1(\partial D)$ 
    is invertible;
    
    \item[(ii)] The nullspace satisfies
    \[
        \operatorname{Ker}\!\left(-\frac{1}{2}\mathcal{I} 
        + (\mathcal{K}^{0, 0}_{D, s})^*\right)
        = \operatorname{span}\{\psi_{1}, \ldots, \psi_{N}\},
    \]
    where 
    $\psi_{j} := (\mathcal{S}^{0, 0}_{D, s})^{-1}[\chi_{\partial D_j}]$, 
    and $\chi_{\partial D_j}$ denotes the characteristic function of 
    $\partial D_j$;
    
    \item[(iii)] The following direct sum decomposition holds:
    \begin{align*}
        L^2(\partial D)
        = L_0^2(\partial D)
        \oplus 
        \operatorname{Ker}\!\left(-\frac{1}{2}\mathcal{I} 
        + (\mathcal{K}^{0, 0}_{D, s})^*\right),
    \end{align*}
    where 
    \[
        L_0^2(\partial D)
        := \left\{
            \psi \in L^2(\partial D)
            \;\bigg|\;
            \int_{\partial D_i} \psi\, \mathrm{d}\sigma = 0,
            \; 1 \leq i \leq N
        \right\}.
    \]
    Furthermore, 
    $-\dfrac{1}{2}\mathcal{I} + (\mathcal{K}^{0, 0}_{D, s})^*$ 
    is invertible as an operator 
    $L_0^2(\partial D) \to L_0^2(\partial D)$.
\end{enumerate}
\end{lemma}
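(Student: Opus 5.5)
The plan is to follow the free-space argument of \cite{ammari2018minnaert}, replacing the Laplace kernel by $G_s^{0,0}(x,y)=G_\#^{0,0}(x-y)-G_\#^{0,0}(x-y^*)$. The new ingredient is a uniqueness statement for the exterior Laplace problem in the periodic half-space $Y_\infty\setminus\overline{D}$ with a Dirichlet condition on $x_d=0$. For $x_d$ large, the kernel $G_s^{0,0}(x,y)$ tends to the constant $\frac{1}{2|Y|}\big(|x_d-y_d|-|x_d+y_d|\big)=-y_d/|Y|$, because the evanescent terms decay like $e^{-|\eta|x_d}$. Hence, for any density $\psi$, the potential $u=\mathcal{S}^{0,0}_{D,s}[\psi]$ has the following properties:
\begin{itemize}
\item it is periodic and harmonic off $\partial D$;
\item it vanishes on $x_d=0$;
\item it tends to a constant $c_\psi=-\frac{1}{|Y|}\int_{\partial D} y_d\psi\,\mathrm{d}\sigma$ as $x_d\to\infty$, with exponentially decaying gradient.
\end{itemize}

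\textbf{Step (i).} $\mathcal{S}^{0,0}_{D,s}$ differs from the free-space Laplace single layer by an operator with smooth kernel, so it is Fredholm of index zero from $L^2(\partial D)$ to $H^1(\partial D)$. It therefore suffices to prove injectivity. Suppose $\mathcal{S}^{0,0}_{D,s}[\psi]=0$ on $\partial D$.
\begin{itemize}
\item Inside $D$, $u$ is harmonic with zero trace, so $u\equiv0$ in $D$.
\item Outside, apply Green's identity on $Y_h\setminus\overline{D}$. The lateral boundary contributions cancel by periodicity, and the bottom contributes nothing since $u=0$ there. Letting $h\to\infty$, the top term $\int_{Y\times\{h\}}u\,\partial_d u$ vanishes because $\partial_d u$ decays exponentially while $u$ stays bounded. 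This gives $\int|\nabla u|^2=0$.
\item So $u$ is constant outside $D$. That constant equals $0$ by the Dirichlet condition at $x_d=0$; this is the key point where the half-space condition kills the constant mode, which in free space $d=2$ is the usual obstruction.
\end{itemize}
The jump relation \eqref{eq: jump_formula} then gives $\psi=\partial_\nu u|_+-\partial_\nu u|_-=0$.

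\textbf{Step (ii).} Set $\psi_j=(\mathcal{S}^{0,0}_{D,s})^{-1}[\chi_{\partial D_j}]$. Then $u_j=\mathcal{S}^{0,0}_{D,s}[\psi_j]$ equals the constant $\delta_{ij}$ in each $D_i$, so $\partial_\nu u_j|_-=0$. By \eqref{eq: jump_formula} this means $(-\frac12\mathcal{I}+(\mathcal{K}^{0,0}_{D,s})^*)[\psi_j]=0$. The $\psi_j$ are linearly independent because their images under the injective operator $\mathcal{S}^{0,0}_{D,s}$ are.

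Conversely, let $\psi$ be in the kernel, and set $u=\mathcal{S}^{0,0}_{D,s}[\psi]$. Then $\partial_\nu u|_-=0$, so $u$ is constant, say $c_i$, on each $D_i$. The function $\mathcal{S}^{0,0}_{D,s}[\psi-\sum_i c_i\psi_i]$ vanishes on $\partial D$, and step (i) yields $\psi=\sum_i c_i\psi_i$.

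\textbf{Step (iii).} Lemma~\ref{lem: integral of Ksn}(i) shows that the range of $A:=-\frac12\mathcal{I}+(\mathcal{K}^{0,0}_{D,s})^*$ lies in $L^2_0(\partial D)$. Since $(\mathcal{K}^{0,0}_{D,s})^*$ is compact ($\partial D\in C^2$, smooth correction kernel), $A$ is Fredholm of index zero on $L^2(\partial D)$. Its kernel is $N$-dimensional by (ii), so its range has codimension $N$. The range is contained in $L^2_0$, which also has codimension $N$, hence $\operatorname{Ran}A=L^2_0$.

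For the direct sum, it suffices to show $\ker A\cap L^2_0=\{0\}$, since the dimensions then match. Take $\psi=\sum_j c_j\psi_j$ with zero mean on each $\partial D_i$, and put $u=\sum_j c_j u_j$. Green's identity on the exterior, using (i)'s computation together with $u=c_i$ on $\partial D_i$, gives
\[
\int_{\text{ext}}|\nabla u|^2=-\sum_i c_i\int_{\partial D_i}\partial_\nu u|_+ = -\sum_i c_i\int_{\partial D_i}\psi=0 .
\]
The middle equality uses $\partial_\nu u|_-=0$. So $u$ is constant outside $D$, and as before that constant is $0$. Thus all $c_i=0$, so $\psi=0$.

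Finally, $A$ maps $L^2_0$ into $L^2_0$ injectively, since $\ker A\cap L^2_0=0$. It is also surjective: for $f\in L^2_0$, write $f=A\phi$ and decompose $\phi=\phi_0+\kappa$ with $\phi_0\in L^2_0$ and $\kappa\in\ker A$; then $f=A\phi_0$. Hence $A$ is invertible on $L^2_0(\partial D)$.

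The main obstacle is justifying the behaviour at $x_d\to\infty$ and the boundary terms in the energy identities. This requires showing, from the Fourier representation of $G_\#^{0,0}$, that $u\to c_\psi$ with $\nabla u=O(e^{-2\pi x_d/L})$, and checking that the linear growth term $|x_d|/(2|Y|)$ cancels in the difference with the image point.
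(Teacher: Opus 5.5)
Your proposal is correct. For part (i) it is the paper's argument: Fredholm of index zero plus injectivity via the energy identity on the truncated cell $Y_h$, where the image term cancels the linear growth $|x_d|/(2|Y|)$, so that $u$ tends to $-\frac{1}{|Y|}\int_{\partial D} y_d\psi\,\mathrm{d}\sigma$ with exponentially decaying gradient and the top boundary term vanishes as $h\to\infty$. For (ii) and (iii) the paper only refers to the free-space arguments; your jump-relation description of the kernel, the Fredholm codimension count, and the energy argument showing $\operatorname{Ker}\bigl(-\frac12\mathcal{I}+(\mathcal{K}^{0,0}_{D,s})^*\bigr)\cap L^2_0(\partial D)=\{0\}$ are a sound way to fill them in. In that last argument, unlike in (i) where continuity across $\partial D$ already suffices, the Dirichlet condition at $x_d=0$ is what forces the exterior constant to vanish.
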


\begin{proof}
We provide the proof of part~(i); parts~(ii) and~(iii) follow from the corresponding arguments in the free space setting (cf. \cite{ammari2018minnaert}).  We first observe that $\mathcal{S}^{0,0}_{D,s}$ is a Fredholm operator of index zero. Therefore, to prove invertibility, it suffices to establish injectivity. Let $u = \mathcal{S}^{0,0}_{D,s}[\psi]$ and choose $h$ so that $x_d < h$ for all $x \in D$. Applying integration by parts over the inclusion $D$ and over the truncated cell domain $Y_h \setminus \overline{D}$, we obtain
\begin{align*}
    \int_{D} |\nabla u|^2\, \mathrm{d}\sigma 
        = \int_{\partial D} 
           \overline{u} 
           \frac{\partial u}{\partial \nu}\Big|_{-}
           \mathrm{d}\sigma, \
    \int_{Y_h \setminus \overline{D}} |\nabla u|^2\, \mathrm{d}\sigma 
        = -\int_{\partial D} 
           \overline{u} 
           \frac{\partial u}{\partial \nu}\Big|_{+}
           \mathrm{d}\sigma
           + \int_{Y \times \{h\}} 
             \overline{u} 
             \frac{\partial u}{\partial \nu}\, 
             \mathrm{d}\sigma.
\end{align*}
Here, we have used the fact that $u$ is quasi-periodic on $\partial Y \times (0, h)$. 
Combining these two expressions yields
\begin{align*}
    \int_{Y_h} |\nabla u|^2\, \mathrm{d}\sigma 
    = -\int_{\partial D} \overline{u}\, \psi\, \mathrm{d}\sigma
      + \int_{Y \times \{h\}} 
        \overline{u} 
        \frac{\partial u}{\partial \nu}\, \mathrm{d}\sigma.
\end{align*}
For $x_d > h$, we have
\begin{align*}
    u(x) = \frac{-1}{|Y|} 
      \int_{\partial D} y_d \psi\, \mathrm{d}\sigma
      - \sum_{\eta \in \Lambda^* \setminus \{0\}} \left(
      \int_{\partial D} 
      \frac{e^{-\mathrm{i} \eta \cdot y_\ell}\left(e^{|\eta| y_d} - e^{-|\eta| y_d}\right)}{2|Y||\eta|}
      \psi\, \mathrm{d}\sigma\right) e^{\mathrm{i} \eta \cdot x_\ell} e^{-|\eta| x_d}.
\end{align*}
Taking the limit as $h \to +\infty$, we obtain
\begin{align*}
    \int_{Y_{\infty}} |\nabla u|^2\, \mathrm{d}\sigma
    = -\int_{\partial D} \overline{u}\, \psi\, \mathrm{d}\sigma.
\end{align*}
Now, assume that $u = 0$ on $\partial D$. 
Then $u = 0$ in both $D$ and $Y_{\infty} \setminus \overline{D}$. 
It follows that 
$2\psi 
= \frac{\partial u}{\partial \nu}\!\Big|_{+} 
 - \frac{\partial u}{\partial \nu}\!\Big|_{-}
= 0$,
which completes the proof.
\end{proof}

\subsection{Periodic capacitance matrix}

Based on \Cref{lem: properties of NP operator}, we define the corresponding capacitance matrix and basis functions in the periodic setting with sound-soft boundary conditions.

\begin{definition}\label{def:periodic_capacitance}
For a system of $N$ resonators, the periodic capacitance matrix 
$C = (C_{ij}) \in \mathbb{R}^{N \times N}$ is defined by
\begin{align}\label{eq: def of capacity matrix by BIE}
    C_{ij} 
    := -\int_{\partial D_i} \psi_{j}\, \mathrm{d}\sigma,
    \quad i, j = 1, \ldots, N,
\end{align}
where $\psi_{j}$ is the basis function defined in \cref{lem: properties of NP operator}.
\end{definition}
\begin{remark}
     We also have the equivalent definition of the capacitance matrix \eqref{eq: def of capacity matrix by BIE} in variational form where $C_{ij}$ can also be expressed as
\begin{align}\label{eq: def of capacity matrix by PDE}
    C_{ij}(D) := \int_{Y_{\infty} \setminus \overline{D}} \nabla v_i \cdot \nabla v_j \, dx=\int_{\partial D_i} \frac{\partial v_j}{\partial \nu}\Big{|}_+d\sigma=\int_{\partial D_j} \frac{\partial v_i}{\partial \nu}\Big{|}_+d\sigma,
    \quad i,j = 1,\ldots, N,
\end{align}
where $v_i$ $(1 \le i \le N)$ are the solutions to the exterior periodic Laplace problem 
\begin{equation}\label{eq: PDE solution of capacity form}
\begin{cases}
\Delta v = 0 & \text{in } Y_{\infty} \setminus \overline{D}, \\[1mm]
v = v_{\partial D}  & \text{on } \partial D, \\[1mm]
v = 0  & \text{on } \partial \mathbb{R}^d_{+}, \\[1mm]
v(x + (\xi,0)) = v(x)  &  \forall \xi \in \Lambda, \\[1mm]
v(x)- v_{\infty}=O(e^{-|x_d|})  & \text{ as } x_d \to +\infty,
\end{cases}
\end{equation}
with the boundary condition $v_i = \chi_{\partial D_i}$ on $\partial D$. Note that the solution of $v_i$ is given by $v_i=\mathcal{S}^{0, 0}_{D, s}[\psi_i]$, where $\psi_i$ is defined in \cref{lem: properties of NP operator}. According to \eqref{eq: def of capacity matrix by PDE},  one can show that $C$ is symmetric and positive definite following similar arguments to those in \cite{ammari2024functional}. 

%Moreover, note that $C$ has $0$ as an eigenvalue with the associated eigenvector given by $(1,\dots, 1)^\top$.
\end{remark}

By the positive semidefiniteness of the capacitance matrix, the following result holds.
\begin{lemma} \label{lem:spC}
There exist $N$ eigenvalues $\{\lambda_j\}_{1 \leq j \leq N}$ and associated eigenvectors $\{u_j\}_{1 \leq j \leq N}$ that satisfy the generalized eigenvalue problem
\begin{align}\label{eq: eigenvalue problem of C}
    C u_j = \lambda_j V u_j, \qquad 1 \leq j \leq N,
\end{align}
where $V = \operatorname{diag}(|D_1|, \dots, |D_N|)$ denotes the diagonal volume matrix.  
The eigenvectors $\{u_j\}$ form an orthonormal basis of $\mathbb{C}^N$ with respect to the $V$ inner product, that is, $u_i^{\top} V u_j = \delta_{ij}$, where the superscript $\top$ denotes the transpose.
\end{lemma}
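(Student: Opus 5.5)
The plan is to reduce the generalized eigenvalue problem \eqref{eq: eigenvalue problem of C} to a standard symmetric eigenvalue problem and apply the spectral theorem for real symmetric matrices.

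\textbf{Step 1: symmetry of $C$.} By the remark following \Cref{def:periodic_capacitance}, the variational form \eqref{eq: def of capacity matrix by PDE} gives
\[
C_{ij}=\int_{Y_\infty\setminus\overline D}\nabla v_i\cdot\nabla v_j\,dx .
\]
Since the $v_i$ are real-valued, $C$ is real and symmetric. The same form gives $x^\top C x=\int_{Y_\infty\setminus\overline D}|\nabla(\sum_i x_i v_i)|^2\,dx\ge 0$ for real $x$, so $C$ is positive semidefinite. Only symmetry is needed for the existence of the eigenbasis; semidefiniteness (indeed definiteness, as the remark states) additionally gives $\lambda_j\ge0$ (resp. $>0$).

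\textbf{Step 2: symmetric reduction.} The matrix $V=\operatorname{diag}(|D_1|,\dots,|D_N|)$ has strictly positive diagonal, because each $D_i$ is a nonempty open connected domain. Hence $V^{1/2}$ and $V^{-1/2}$ are well defined, real and diagonal. Set
\[
\tilde C:=V^{-1/2}CV^{-1/2},
\]
which is again real symmetric (and positive semidefinite, since $x^\top\tilde Cx=(V^{-1/2}x)^\top C(V^{-1/2}x)$). By the spectral theorem there are real eigenvalues $\lambda_1,\dots,\lambda_N$ and real eigenvectors $w_1,\dots,w_N$ with $\tilde C w_j=\lambda_j w_j$ and $w_i^\top w_j=\delta_{ij}$.

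\textbf{Step 3: back-substitution.} Define $u_j:=V^{-1/2}w_j$. Then
\[
Cu_j=V^{1/2}\tilde C w_j=\lambda_j V^{1/2}w_j=\lambda_j V u_j ,
\]
and
\[
u_i^\top V u_j=w_i^\top w_j=\delta_{ij}.
\]
The $V$-orthonormality implies linear independence: pairing $\sum_j c_j u_j=0$ with $u_i^\top V$ gives $c_i=0$. Since there are $N$ of them, the $u_j$ form a basis of $\mathbb{C}^N$. Note that the transpose bilinear form coincides with the Hermitian $V$-inner product here, because the $u_j$ are real.

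There is no real obstacle in this argument. The only point needing care is justifying that $C$ is symmetric and real, which comes from the variational characterization \eqref{eq: def of capacity matrix by PDE} and the fact that $\psi_j$, hence $v_j$, is real-valued because $G^{0,0}_s$ is real.
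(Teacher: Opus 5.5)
Your proof is correct and matches the paper's approach: the paper gives no separate proof and only notes that the lemma follows from the symmetry and positive (semi)definiteness of $C$, which is what your reduction to the real symmetric matrix $V^{-1/2}CV^{-1/2}$ uses. Your proof supplies the details behind that remark, including why $C$ and the $u_j$ are real, so that the transpose form $u_i^{\top}Vu_j$ is the right inner product.
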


To avoid technical complications arising from possible eigenvalue multiplicities, 
we make the following simplifying assumption.

\begin{assumption}\label{asmp: eigenvalue}
All eigenvalues of~\eqref{eq: eigenvalue problem of C} are simple, 
meaning that each eigenvalue has algebraic multiplicity equal to one.
\end{assumption}

Next, we introduce a projection operator associated with the normalized basis, which will play a key role in the subsequent analysis.
\begin{definition}\label{def: projection}
Let $\{\hat{\psi}_{j}\}_{1 \le j \le N}$ be the basis of the subspace
$\operatorname{Ker}\left(-\frac{1}{2}\mathcal{I} + (\mathcal{K}^{0, 0}_{D, s})^*\right)$ defined by
\begin{align*}
    \hat{\psi}_{i} 
    := -\sum_{j=1}^{N} (C^{-1})_{ij}\, \psi_{j},
    \qquad 1 \le i \le N,
\end{align*}
where $C$ denotes the capacitance matrix in \eqref{eq: def of capacity matrix by BIE}.  We define the vectors $\hat{m}, m\in\mathbb{R}^N$ by
\begin{equation}\label{def:mj}
\hat{m}_i := \int_{\partial D} x_d\,\hat{\psi}_i(x)\,\mathrm{d}\sigma(x),\quad
m_i := -\int_{\partial D} x_d\,\psi_i(x)\,\mathrm{d}\sigma(x),\quad 1\le i\le N.
\end{equation} 
Finally, we define the projection operator
\begin{align*}
\mathcal{P}[\psi]
&:= \sum_{i=1}^{N}
\left( \int_{\partial D_i} \psi\, \mathrm{d}\sigma \right)
\hat{\psi}_{i}.
\end{align*}
\end{definition}

With these definitions, the basis $\{\hat{\psi}_j\}_{j=1}^N$ is normalized in the sense that $\int_{\partial D_i}\hat{\psi}_j\,\mathrm{d}\sigma=\delta_{ij}$ for $1\le i,j\le N$, the projection satisfies $\mathcal{P}[\hat{\psi}_i]=\hat{\psi}_i$ for $i=1,\dots,N$, and the moment vectors satisfy $m=C\hat{m}$.

\section{Approximation of the total field}\label{sec:approximation}

In this section, we investigate the periodic scattering problem \eqref{eq: model} using an integral formulation. Our objective is to analyze the resonance frequencies and the reflection coefficient in the subwavelength regime.

\subsection{Integral formulation}

We first note that, in the absence of resonators, the reflected field is 
given by $u^{r}(x) = -e^{\mathrm{i}k_m \hat{\theta}_{+} \cdot x}$. 
For convenience, we introduce $$\tilde{u}^{i}(x):=e^{k_m \hat{\theta}_{-} \cdot x}-e^{k_m \hat{\theta}_{+} \cdot x}=-2\mathrm{i}\sin(k_m\hat{\theta}_{d} x_{d})e^{\mathrm{i}k_m x_{\ell} \cdot \hat{\theta}_{\ell}}=
-2\mathrm{i}\sin(\omega \tau_m x_{d})e^{\mathrm{i}\omega a \cdot x_{\ell}},$$ which satisfies the sound-soft boundary condition on  $\partial \mathbb{R}^{d}_{+}$. Here, we have set $\alpha = k_m \hat{\theta}_{\ell} = \omega a$, $a = \hat{\theta}_{\ell}/v$, and $\tau_m = \hat{\theta}_{d}/v_m$. The total field $u$ solving \eqref{eq: model} admits the representation
\begin{equation}\label{eq: integral formulation of u}
    u(x) = \begin{cases}
    \begin{aligned}
        &\mathcal{S}^{\alpha, k_b}_{D, s}[\phi](x),& &x\in \mathcal{D},\\ 
        &\mathcal{S}^{\alpha, k_m}_{D, s}[\varphi](x)+\tilde{u}^{i}(x),& &x\in \mathbb{R}^{d}\setminus \overline{\mathcal{D}},
    \end{aligned}
    \end{cases}
\end{equation}
where $(\phi,\varphi)\in L^{2}(\partial D)\times L^{2}(\partial D)$ are the unknown boundary densities. Imposing the transmission conditions across $\partial D$ and applying the jump relations for the single-layer potential lead to the coupled system
\begin{equation}\label{eq: integral scattering problem}
	\begin{cases}
    \begin{aligned}
        &\mathcal{S}^{\alpha, k_b}_{D, s}[\phi] - \mathcal{S}^{\alpha, k_m}_{D, s}[\varphi] = \tilde{u}^{i}, \\
		&\left(-\dfrac{1}{2} \mathcal{I} + (\mathcal{K}^{\alpha, k_b}_{D, s})^*\right)[\phi]
		- \delta\left(\dfrac{1}{2} \mathcal{I} + (\mathcal{K}^{\alpha, k_m}_{D, s})^*\right)[\varphi]
		= \delta \dfrac{\partial \tilde{u}^{i}}{\partial \nu}.
    \end{aligned}
	\end{cases}
\end{equation}
Note that since $\mathcal{S}^{\alpha, k_m}_{D, s}$ is invertible \cite{ammari2024functional}, we  may express $\varphi=(\mathcal{S}^{\alpha, k_m}_{D, s})^{-1}(\mathcal{S}^{\alpha, k_b}_{D, s})[\phi]-(\mathcal{S}^{\alpha, k_m}_{D, s})^{-1}[\tilde{u}^{i}]$. Substituting this expression into \eqref{eq: integral scattering problem}, we obtain the boundary integral equation for $\phi$
\begin{align}\label{eq: simplied integral equation}
    \mathcal{A}(\omega,\delta )[\phi] = \delta \mathcal{F}(\omega).
\end{align}
Here, the operator $\mathcal{A}(\omega, \delta)$ and the right hand term $\mathcal{F}(\omega)$ are defined by
\begin{align*}
    \mathcal{A}(\omega,\delta )[\phi]=&\left(-\dfrac{1}{2} \mathcal{I} + (\mathcal{K}^{-\alpha, k_b}_{D, s})^*-\delta \left(\dfrac{1}{2} \mathcal{I} + (\mathcal{K}^{-\alpha, k_m}_{D, s})^*\right)(\mathcal{S}^{\alpha, k_m}_{D, s})^{-1}(\mathcal{S}^{\alpha, k_b}_{D, s}) \right)[\phi], \\
   \mathcal{F}(\omega) =& \dfrac{\partial \tilde{u}^{i}}{\partial \nu}- \left(\dfrac{1}{2} \mathcal{I} + (\mathcal{K}^{-\alpha, k_m}_{D, s})^*\right)(\mathcal{S}^{\alpha, k_m}_{D, s})^{-1}[\tilde{u}^{i}].
\end{align*}

Next, we solve the boundary integral equation \eqref{eq: simplied integral equation} using asymptotic analysis. From the expression of $\tilde{u}^i(x)$, we obtain
\begin{equation*}
    \begin{aligned}
        \tilde{u}^i(x)
        = -2\mathrm{i} \tau_m\, x_d\, \omega + O(\omega^2), \quad
        \frac{\partial \tilde{u}^i}{\partial \nu}(x)
        = -2\mathrm{i} \tau_m\, \nu_d\, \omega + O(\omega^2),
    \end{aligned}
\end{equation*}
where $\nu_d$ denotes the $d$\textsuperscript{th} component of the unit normal $\nu$. It follows that the right hand term $\mathcal{F}(\omega)$ admits the asymptotic expansion
\begin{align}\label{eq: asymptotic of F(omega)}
   \mathcal{F}(\omega)
   = -2\mathrm{i} \tau_m\, \nu_d\, \omega
    +2\mathrm{i} \tau_m\, \omega\left(\frac{1}{2}\mathcal{I}+ (\mathcal{K}^{0, 0}_{D, s})^*\right)(\mathcal{S}^{0, 0}_{D, s})^{-1}[x_d]+ O(\omega^2).
\end{align}
Using the asymptotic expansion of the boundary operators in \eqref{eq: asymptotic of boundary operator}, the operator $\mathcal{A}(\omega, \delta)$ admits the expansion
\begin{align}\label{eq: expansion of A}
    \mathcal{A}(\omega,\delta )
    =& \mathcal{A}_0+\omega \mathcal{A}_{1,0}+\omega^2 \mathcal{A}_{2,0}+\omega^3 \mathcal{A}_{3,0}+\delta \mathcal{A}_{0,1}+\omega \delta \mathcal{A}_{1,1}+O(\omega^4)+O(\delta \omega^2),
\end{align}
where each coefficient operator is given explicitly by
\begin{gather*}
    \mathcal{A}_0=-\dfrac{1}{2} \mathcal{I} + (\mathcal{K}^{0, 0}_{D, s})^*, \ \mathcal{A}_{1,0}=  (\mathcal{K}^{-a,\tau_b}_{D, s,1})^*, \ \mathcal{A}_{2,0}=  (\mathcal{K}^{-a,\tau_b}_{D, s,2})^*,\ \mathcal{A}_{3,0}=  (\mathcal{K}^{-a,\tau_b}_{D, s,3})^*, \\
    \mathcal{A}_{0,1} = -\dfrac{1}{2} \mathcal{I} - (\mathcal{K}^{0, 0}_{D, s})^*, \ \mathcal{A}_{1,1} = \left(\dfrac{1}{2} \mathcal{I} + (\mathcal{K}^{0, 0}_{D, s})^*\right)(\mathcal{S}^{0, 0}_{D, s})^{-1}\left(\mathcal{S}^{a,\tau_m}_{D, s, 1}-\mathcal{S}^{a,\tau_b}_{D, s, 1}\right)-(\mathcal{K}^{-a,\tau_m}_{D, s,1})^*,
\end{gather*}
with $\tau_b:= \sqrt{k_b^2-|\alpha|^2}/\omega$. According to \cref{lem: integral of Ksn}, the following integral identities hold.
\begin{lemma}\label{lem: integral of An}
For any $\psi\in L^2(\partial D)$, for $1\leq i\leq N$, we have
    \begin{gather*}
        \int_{\partial D_i} \mathcal{A}_0[\psi] d\sigma=0,\quad \int_{\partial D_i} \mathcal{A}_{1,0}[\psi] d\sigma=0, \quad \int_{\partial D_i} \mathcal{A}_{0,1}[\psi] d\sigma=-\int_{\partial D_i} \psi,  \\
         \int_{\partial D_i} \mathcal{A}_{2,0}[\psi] d\sigma=-\frac{1}{v_b^2}\int_{D_i} \mathcal{S}^{0, 0}_{D, s}[\psi]dx,\quad \int_{\partial D_i}\mathcal{A}_{3,0}[\psi] d\sigma=-\frac{1}{v_b^2}\int_{D_i} \mathcal{S}^{a,\tau_b}_{D, s, 1}[\psi]dx, \\
         \int_{\partial D_i} \mathcal{A}_{1,1}[\psi] d\sigma=\dfrac{\mathrm{i}(\tau_m-\tau_b)}{|Y|} m_i\int_{\partial D} y_d\psi d\sigma.
    \end{gather*}
\end{lemma}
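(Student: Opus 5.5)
The plan is to take the six identities one at a time, reading each coefficient operator off from the list following \eqref{eq: expansion of A} and integrating over $\partial D_i$ with \Cref{lem: integral of Ksn}. The only further ingredient is the characterization of the kernel of $\mathcal{A}_0$ from \Cref{lem: properties of NP operator}.

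\emph{The identities for $\mathcal{A}_0$, $\mathcal{A}_{0,1}$ and $\mathcal{A}_{2,0}$.} The identity for $\mathcal{A}_0$ is exactly \Cref{lem: integral of Ksn}(i). For $\mathcal{A}_{0,1}=-\mathcal{A}_0-\mathcal{I}$, part (i) gives $\int_{\partial D_i}\mathcal{A}_{0,1}[\psi]\,\mathrm{d}\sigma=-\int_{\partial D_i}\psi\,\mathrm{d}\sigma$. For $\mathcal{A}_{2,0}=(\mathcal{K}^{-a,\tau_b}_{D,s,2})^*$, apply \Cref{lem: integral of Ksn}(ii) with $n=2$ and $v=v_b$. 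This yields the volume integral of $\mathcal{S}^{a,\tau_b}_{D,s,0}[\psi]=\mathcal{S}^{0,0}_{D,s}[\psi]$ over $D_i$, with prefactor $-1/v_b^2$.

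\emph{The identities for $\mathcal{A}_{1,0}$ and $\mathcal{A}_{3,0}$.} For $\mathcal{A}_{3,0}$, apply (ii) with $n=3$ to obtain the volume integral of $\mathcal{S}^{a,\tau_b}_{D,s,1}[\psi]$, again with prefactor $-1/v_b^2$. For $\mathcal{A}_{1,0}$, apply (ii) with $n=1$. This produces $\mathcal{S}^{a,\tau_b}_{D,s,-1}$, which vanishes because $G_{s,-1}=0$, so the integral is zero. As a sanity check I would also verify this directly. The kernel $G_{s,1}$ in \eqref{eq: Gs1} is harmonic in $x$, because the recurrence reads $\Delta G_{\#,1}=\sum_{\xi}(\mathrm{i}a\cdot\xi)\delta(x-(\xi,0))$ and the singularity at $\xi=0$ carries weight zero. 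Hence the divergence theorem on $D_i$ gives zero flux.

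\emph{The identity for $\mathcal{A}_{1,1}$ (main obstacle).} Write $\mathcal{A}_{1,1}=\mathcal{A}_0(\mathcal{S}^{0,0}_{D,s})^{-1}\Delta\mathcal{S}+(\mathcal{S}^{0,0}_{D,s})^{-1}\Delta\mathcal{S}-(\mathcal{K}^{-a,\tau_m}_{D,s,1})^*$, where $\Delta\mathcal{S}:=\mathcal{S}^{a,\tau_m}_{D,s,1}-\mathcal{S}^{a,\tau_b}_{D,s,1}$; this uses $\tfrac12\mathcal{I}+\mathcal{K}^*=\mathcal{A}_0+\mathcal{I}$.
\begin{itemize}
\item The first term integrates to zero over $\partial D_i$ by part (i).
\item The last term integrates to zero by the $n=1$ case just treated.
\item In $\Delta\mathcal{S}$ the $\hat{G}^a_{s,1}$ parts cancel, since they do not depend on $\tau$. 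By \eqref{eq: Gs1} this leaves $\Delta\mathcal{S}[\psi](x)=-\frac{\mathrm{i}(\tau_m-\tau_b)}{|Y|}x_d\int_{\partial D}y_d\psi\,\mathrm{d}\sigma$.
\end{itemize}
It then remains to compute $\int_{\partial D_i}(\mathcal{S}^{0,0}_{D,s})^{-1}[x_d]\,\mathrm{d}\sigma$. Decompose $x_d|_{\partial D}=\sum_j c_j\chi_{\partial D_j}+w$ and use the self-adjointness of $\mathcal{S}^{0,0}_{D,s}$ (its kernel is real and symmetric) to get
\begin{equation*}
\int_{\partial D_i}(\mathcal{S}^{0,0}_{D,s})^{-1}[x_d]\,\mathrm{d}\sigma=\langle (\mathcal{S}^{0,0}_{D,s})^{-1}[x_d],\chi_{\partial D_i}\rangle=\langle x_d,\psi_i\rangle=-m_i .
\end{equation*}
Combining the signs gives $\frac{\mathrm{i}(\tau_m-\tau_b)}{|Y|}m_i\int_{\partial D}y_d\psi\,\mathrm{d}\sigma$. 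The delicate point is keeping the sign conventions of \eqref{eq: Gs1} and \eqref{def:mj} consistent, and justifying the symmetry of $\mathcal{S}^{0,0}_{D,s}$ in the real $L^2$ pairing.
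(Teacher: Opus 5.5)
Your proposal is correct and is essentially the paper's proof. The paper also obtains the first five identities directly from \Cref{lem: integral of Ksn}, and for $\mathcal{A}_{1,1}$ it uses exactly your ingredients: parts (i) and (ii) with $n=1$ of \Cref{lem: integral of Ksn}; the cancellation $G^{a,\tau_m}_{s,1}-G^{a,\tau_b}_{s,1}=-\frac{\mathrm{i}(\tau_m-\tau_b)}{|Y|}x_dy_d$; and the symmetry of $\mathcal{S}^{0,0}_{D,s}$, which gives $\int_{\partial D_i}(\mathcal{S}^{0,0}_{D,s})^{-1}[x_d]\,\mathrm{d}\sigma=\int_{\partial D}x_d\psi_i\,\mathrm{d}\sigma=-m_i$.

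One small correction to your optional sanity check: the $n=1$ recurrence is $\Delta G^{a,\tau}_{\#,1}+\frac{1}{v^2}G^{a,\tau}_{\#,-1}=\sum_{\xi\in\Lambda}(\mathrm{i}a\cdot\xi)\,\delta(x-(\xi,0))$, with $G^{a,\tau}_{\#,-1}=\frac{1}{2\mathrm{i}\tau|Y|}\neq 0$, so $G^{a,\tau}_{\#,1}$ itself is not harmonic. The kernel $G^{a,\tau}_{s,1}(\cdot,y)$ is harmonic in $D_i$ only because this constant cancels in the image difference, equivalently because $G^{a,\tau}_{s,-1}=0$.
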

\begin{proof}
    We only prove the last identity, since the others follow directly from \cref{lem: integral of Ksn}. From the expression of $G^{a,\tau}_{s, 1}$ in \eqref{eq: Gs1}, we have
    \begin{align*}
            G^{a,\tau_m}_{s, 1} - G^{a,\tau_b}_{s, 1}
    = -\frac{\mathrm{i}(\tau_m-\tau_b)}{|Y|}\, x_d y_d,
    \end{align*}
which implies
\begin{align*}
        \mathcal{S}^{a,\tau_m}_{D, s, 1}[\psi]
    - \mathcal{S}^{a,\tau_b}_{D, s, 1}[\psi]
    = -\frac{\mathrm{i}(\tau_m-\tau_b)}{|Y|}\, x_d
      \int_{\partial D} y_d \psi\, \mathrm{d}\sigma.
\end{align*}
Applying \cref{lem: integral of Ksn}, we obtain
\begin{align*}
        \int_{\partial D_i} \mathcal{A}_{1,1}[\psi] \, \mathrm{d}\sigma
        = -\frac{\mathrm{i}(\tau_m-\tau_b)}{|Y|}
            \int_{\partial D_i}
              (\mathcal{S}^{0, 0}_{D, s})^{-1}[x_d]
              \, \mathrm{d}\sigma
            \int_{\partial D} y_d \psi\, \mathrm{d}\sigma.
\end{align*}
Using \cref{def: projection}, we derive the useful identity
\begin{align}\label{eq: integral of Sinverse_xd}
\int_{\partial D_i} 
    \left(\mathcal{S}^{0, 0}_{D, s}\right)^{-1}[x_d]\, \mathrm{d}\sigma
= \int_{\partial D} 
     x_d\, 
     \left(\mathcal{S}^{0, 0}_{D, s}\right)^{-1}
     [\chi_{\partial D_i}]\, \mathrm{d}\sigma 
= \int_{\partial D} 
     x_d\, \psi_{i}\, \mathrm{d}\sigma
 =- m_i. 
\end{align}
The desired result now follows from \eqref{eq: integral of Sinverse_xd}.
\end{proof}

In order to solve \eqref{eq: simplied integral equation}, we note from the expansion of $\mathcal{A}(\omega, \delta)$
in \eqref{eq: expansion of A} that the leading-order operator $\mathcal{A}_0$ is not invertible. To overcome this difficulty, we introduce the modified operator $\widetilde{\mathcal{A}}_{0}:= \mathcal{A}_0 + \mathcal{P}$. Its inverse is explicitly given by
\begin{align}\label{eq: inverse of A0}
    \widetilde{\mathcal{A}}_{0}^{-1}[\psi]
    = \left(-\dfrac{1}{2}\mathcal{I}
        + (\mathcal{K}^{0, 0}_{D, s})^*\right)^{-1}
        [\psi - \mathcal{P}[\psi]]
      + \mathcal{P}[\psi],
      \quad
      \psi \in L^2(\partial D).
\end{align}
Using $\widetilde{\mathcal{A}}_{0}$, equation~\eqref{eq: simplied integral equation} can be rewritten as
\begin{align}\label{eq: modified simplified equation}
    \big(\widetilde{\mathcal{A}}_{0}
        + \mathcal{B}(\omega, \delta)
        - \mathcal{P}\big)[\phi]
    = \delta\, \mathcal{F}(\omega),
\end{align}
where $\mathcal{B}(\omega, \delta):= \mathcal{A}(\omega, \delta) - \mathcal{A}_0$. To determine the boundary density $\phi$, we decompose it into its projected and complementary parts:
\begin{align*}
        \phi = \phi_0 + \phi_1,
    \qquad
    \phi_0 := \mathcal{P}[\phi],
    \quad
    \phi_1 := \phi - \mathcal{P}[\phi].
\end{align*}
Substituting this decomposition into \eqref{eq: modified simplified equation} and applying $\big(\widetilde{\mathcal{A}}_{0}+ \mathcal{B}(\omega, \delta)\big)^{-1}$ yields
\begin{align}
    \phi_0 + \phi_1
      - \big(\widetilde{\mathcal{A}}_{0}
              + \mathcal{B}(\omega, \delta)\big)^{-1}[\phi_0]
    = \delta\, 
      \big(\widetilde{\mathcal{A}}_{0}
          + \mathcal{B}(\omega, \delta)\big)^{-1}
        \mathcal{F}(\omega).
\end{align}

Assume that the projected component admits the expansion $\phi_0 = \sum_{j=1}^{N} z_j\, \hat{\psi}_j$, where the coefficients $z_j$ are to be determined. Integrating the above equation over the boundary of each resonator $\partial D_i$ gives
\begin{align*}
    \sum_{j=1}^{N}
      \Big(
        \delta_{ij}
        - \int_{\partial D_i}
            \mathcal{H}(\omega, \delta)[\hat{\psi}_j]
            \, \mathrm{d}\sigma
      \Big) z_j
    = \delta
      \int_{\partial D_i}
        \mathcal{H}(\omega, \delta)\,
        \mathcal{F}(\omega)
        \, \mathrm{d}\sigma,
\end{align*}
where $\mathcal{H}(\omega, \delta):= \big(\widetilde{\mathcal{A}}_{0}+ \mathcal{B}(\omega, \delta)\big)^{-1}$. Consequently, the coefficient vector $z = (z_1, \dots, z_N)^{\mathsf{T}}$ satisfies the finite dimensional system
\begin{align}\label{eq: finite dimensional equation}
    A(\omega, \delta)\, z
    = \delta\, F(\omega, \delta),
\end{align}
where the matrix $A(\omega, \delta)\in \mathbb{C}^{N\times N}$ and the vector $F(\omega, \delta) \in \mathbb{C}^N$ are defined by
\begin{align}\label{eq: def of A and F}
    A_{ij}(\omega, \delta)
      := \delta_{ij}
        - \int_{\partial D_i}
            \mathcal{H}(\omega, \delta)[\hat{\psi}_j]
            \, \mathrm{d}\sigma,
        \quad
    F_j(\omega, \delta)
      := \int_{\partial D_j}
            \mathcal{H}(\omega, \delta)
            \mathcal{F}(\omega)
            \, \mathrm{d}\sigma.
\end{align}
If $A(\omega, \delta)$ is invertible, then the solution to \eqref{eq: integral scattering problem} is given by
\begin{equation}\label{eq: solution of densities}
    \begin{aligned}
        \phi_0 &= \sum_{j=1}^{N} z_j \hat{\psi}_j,
    \
    z = \delta\, A(\omega, \delta)^{-1} F(\omega, \delta), \\[4pt]
    \phi &= \mathcal{H}(\omega, \delta)[\phi_0]+\delta\, 
            \mathcal{H}(\omega, \delta)\mathcal{F}(\omega), \\[4pt]
    \varphi
       &= (\mathcal{S}^{\alpha, k_m}_{D, s})^{-1}
           (\mathcal{S}^{\alpha, k_b}_{D, s})[\phi]
          - (\mathcal{S}^{\alpha, k_m}_{D, s})^{-1}[\tilde{u}^{i}].
    \end{aligned}
\end{equation}

\subsection{Subwavelength resonances}

In the absence of an incident field (i.e., $u^i = 0$), the resonances of the $N$ resonators in the scattering problem
\eqref{eq: model} are defined as complex frequencies $\omega$ with $\Im \omega < 0$ for which there exists a nontrivial solution to the integral equation $\mathcal{A}(\omega,\delta )[\phi] = 0$. Equivalently, the resonances correspond to complex numbers $\omega$ for which the finite dimensional system $A(\omega,\delta ) z = 0$ admits a nontrivial solution. In what follows, we analyze this nonlinear eigenvalue problem for $A(\omega,\delta)$ in the subwavelength regime.

The  matrix $A(\omega,\delta)$ in \eqref{eq: def of A and F} depends on $\mathcal{H}(\omega,\delta)$. It is therefore natural to derive its asymptotic expansion. Using the expansion of $\mathcal{A}(\omega,\delta)$
in \eqref{eq: expansion of A}, we obtain
\begin{equation}\label{eq: asymptotic of H}
    \begin{aligned}
         \mathcal{H}(\omega,\delta ) &= \widetilde{\mathcal{A}}_{0}^{-1}+\sum_{j=1}^3 (-1)^j\widetilde{\mathcal{A}}_{0}^{-1}\left( \mathcal{B}(\omega,\delta )\widetilde{\mathcal{A}}_{0}^{-1}\right)^j+O((\omega^2+\delta)^2),\\
    & = \mathcal{H}_0+\omega \mathcal{H}_{1,0}+\omega^2 \mathcal{H}_{2,0}+\omega^3 \mathcal{H}_{3,0}+\delta \mathcal{H}_{0,1}+\omega \delta \mathcal{H}_{1,1}+O((\omega^2+\delta)^2),
    \end{aligned}
\end{equation}
where the first few terms $\mathcal{H}_{n,m}$ in the expansion are defined as follows:
\begin{gather*}
    \mathcal{H}_0 = \widetilde{\mathcal{A}}_{0}^{-1},\ \mathcal{H}_{1,0} = -\widetilde{\mathcal{A}}_{0}^{-1} \mathcal{A}_{1,0}\widetilde{\mathcal{A}}_{0}^{-1}, \
    \mathcal{H}_{2,0} = \widetilde{\mathcal{A}}_{0}^{-1}\left( -\mathcal{A}_{2,0}+ \mathcal{A}_{1,0}\widetilde{\mathcal{A}}_{0}^{-1}\mathcal{A}_{1,0}\right)\widetilde{\mathcal{A}}_{0}^{-1}, \\
    \mathcal{H}_{3,0} = \widetilde{\mathcal{A}}_{0}^{-1} \left(-\mathcal{A}_{3,0}+ \mathcal{A}_{1,0}\widetilde{\mathcal{A}}_{0}^{-1} \mathcal{A}_{2,0}+ \mathcal{A}_{2,0}\widetilde{\mathcal{A}}_{0}^{-1}\mathcal{A}_{1,0}-\mathcal{A}_{1,0}\widetilde{\mathcal{A}}_{0}^{-1}\mathcal{A}_{1,0}\widetilde{\mathcal{A}}_{0}^{-1}\mathcal{A}_{1,0}\right)\widetilde{\mathcal{A}}_{0}^{-1},\\
    \mathcal{H}_{0,1} = -\widetilde{\mathcal{A}}_{0}^{-1}\mathcal{A}_{0,1}\widetilde{\mathcal{A}}_{0}^{-1}, \ \mathcal{H}_{1,1} = \widetilde{\mathcal{A}}_{0}^{-1}\left(-\mathcal{A}_{1,1}+\mathcal{A}_{1,0}\widetilde{\mathcal{A}}_{0}^{-1}\mathcal{A}_{0,1}+\mathcal{A}_{0,1}\widetilde{\mathcal{A}}_{0}^{-1}\mathcal{A}_{1,0}\right)\widetilde{\mathcal{A}}_{0}^{-1}.
\end{gather*}
We now present several integral identities that will be used in the analysis of $A_{ij}(\omega, \delta)$.
\begin{lemma}\label{lem: integral of Hn}
For any $1 \le i,j \le N$, the following relations hold:
\begin{gather*}
    \int_{\partial D_i} \mathcal{H}_{0}[\hat{\psi}_{j}]\, \mathrm{d}\sigma = \delta_{ij}, 
    \quad
    \int_{\partial D_i} \mathcal{H}_{1,0}[\hat{\psi}_{j}]\, \mathrm{d}\sigma = 0,
    \quad
    \int_{\partial D_i} \mathcal{H}_{0,1}[\hat{\psi}_{j}]\, \mathrm{d}\sigma = \delta_{ij}, \\
    \int_{\partial D_i} \mathcal{H}_{2,0}[\hat{\psi}_{j}]\, \mathrm{d}\sigma 
    = -\frac{|D_i|}{v_b^{2}}\,(C^{-1})_{ij}, 
    \quad
    \int_{\partial D_i} \mathcal{H}_{1,1}[\hat{\psi}_{j}]\, \mathrm{d}\sigma
    = -\frac{\mathrm{i}(\tau_m-\tau_b)}{|Y|}
      m_i \hat{m}_j, \\
    \int_{\partial D_i} \mathcal{H}_{3,0}[\hat{\psi}_{j}]\, \mathrm{d}\sigma
    = \mathrm{i}\,\frac{|D_i|}{v_b^{2}|Y|}
      \Big( Q_{ij} - \tau_b\, \hat{m}_i \hat{m}_j \Big).
\end{gather*}
Here, $C = (C_{ij})$ denotes the capacitance matrix introduced in \cref{def:periodic_capacitance}, $|D_i|$ is the volume of the $i$\textsuperscript{th} inclusion, and $|Y|$ is the volume of the periodic cell. The coefficients $Q_{ij}$ are defined by
\begin{equation} \label{def:qij}
    Q_{ij} 
    = \int_{\partial D_i}\int_{\partial D_j}
        \hat{G}^{a}_{s,1}(x,y)\,
        \hat{\psi}_{i}(x)\,\hat{\psi}_{j}(y)\, 
        \mathrm{d}\sigma_x\, \mathrm{d}\sigma_y
    \in \mathbb{R},
\end{equation}
and satisfy the antisymmetry relation $ Q_{ij} + Q_{ji} = 0$.
\end{lemma}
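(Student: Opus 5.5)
The plan is to reduce every integral to the two basic facts about $\widetilde{\mathcal{A}}_{0}^{-1}$, and then push the expansion terms through \cref{lem: integral of An}. The two facts are:
\begin{itemize}
\item[(a)] $\widetilde{\mathcal{A}}_{0}^{-1}[\hat\psi_j]=\hat\psi_j$. This holds because $\hat\psi_j\in\operatorname{Ker}\mathcal{A}_0$ and $\mathcal{P}[\hat\psi_j]=\hat\psi_j$.
\item[(b)] $\int_{\partial D_i}\widetilde{\mathcal{A}}_{0}^{-1}[\psi]\,\mathrm{d}\sigma=\int_{\partial D_i}\psi\,\mathrm{d}\sigma$ for every $\psi$. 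This follows from \eqref{eq: inverse of A0}: the first term lies in $L^2_0(\partial D)$ by \cref{lem: properties of NP operator}(iii), and $\int_{\partial D_i}\mathcal{P}[\psi]=\int_{\partial D_i}\psi$ by the normalization of $\hat\psi_j$.
\end{itemize}

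With (a) and (b), each $\mathcal{H}_{n,m}$ sandwiched as $\int_{\partial D_i}\widetilde{\mathcal{A}}_{0}^{-1}X\widetilde{\mathcal{A}}_{0}^{-1}[\hat\psi_j]$ reduces to $\int_{\partial D_i}X[\hat\psi_j]$. Concretely:
\begin{itemize}
\item $\mathcal{H}_0$ gives $\delta_{ij}$.
\item $\mathcal{H}_{1,0}$ gives $-\int_{\partial D_i}\mathcal{A}_{1,0}[\hat\psi_j]=0$.
\item $\mathcal{H}_{0,1}$ gives $-\int_{\partial D_i}\mathcal{A}_{0,1}[\hat\psi_j]=\int_{\partial D_i}\hat\psi_j=\delta_{ij}$.
\end{itemize}

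For the product terms, the outermost operator in each summand is one of $\mathcal{A}_{n,m}$. If it is $\mathcal{A}_{1,0}$, its boundary integral vanishes, so every term beginning with $\mathcal{A}_{1,0}$ drops out.

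\textbf{$\mathcal{H}_{2,0}$.} Only $-\int_{\partial D_i}\mathcal{A}_{2,0}[\hat\psi_j]=\frac1{v_b^2}\int_{D_i}\mathcal{S}^{0,0}_{D,s}[\hat\psi_j]$ survives. Since $\mathcal{S}^{0,0}_{D,s}[\psi_k]=\chi_{\partial D_k}$, the function $\mathcal{S}^{0,0}_{D,s}[\hat\psi_j]$ is harmonic in each $D_l$ with constant boundary value $-(C^{-1})_{jl}$. Hence it equals that constant inside $D_i$, which gives $-|D_i|(C^{-1})_{ji}/v_b^2$. Symmetry of $C$ then yields the claim.

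\textbf{$\mathcal{H}_{1,1}$.} The term $\mathcal{A}_{0,1}\widetilde{\mathcal{A}}_{0}^{-1}\mathcal{A}_{1,0}$ gives $-\int_{\partial D_i}\mathcal{A}_{1,0}[\hat\psi_j]=0$, and the term beginning with $\mathcal{A}_{1,0}$ vanishes. What remains is $-\int_{\partial D_i}\mathcal{A}_{1,1}[\hat\psi_j]=-\frac{\mathrm{i}(\tau_m-\tau_b)}{|Y|}m_i\hat m_j$, by \cref{lem: integral of An} and \eqref{def:mj}.

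\textbf{$\mathcal{H}_{3,0}$.} The terms beginning with $\mathcal{A}_{1,0}$ vanish, leaving
\[
-\int_{\partial D_i}\mathcal{A}_{3,0}[\hat\psi_j]+\int_{\partial D_i}\mathcal{A}_{2,0}\widetilde{\mathcal{A}}_{0}^{-1}\mathcal{A}_{1,0}[\hat\psi_j].
\]
The second piece is $-\frac1{v_b^2}\int_{D_i}\mathcal{S}^{0,0}_{D,s}[w]$ with $w=\widetilde{\mathcal{A}}_{0}^{-1}\mathcal{A}_{1,0}[\hat\psi_j]$. Because $\mathcal{A}_{1,0}[\hat\psi_j]\in L^2_0$ and $\mathcal{P}$ annihilates $L^2_0$, the density $w$ lies in $L^2_0$. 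Then $\mathcal{S}^{0,0}_{D,s}[w]$ is harmonic in $D_i$. By Green's identity, $\int_{D_i}\mathcal{S}^{0,0}_{D,s}[w]$ can be rewritten as $\int_{\partial D}\mathcal{S}^{0,0}_{D,s}[w]\,\cdot$ (a density), which pairs against $\hat\psi$-type densities. Using the self-adjointness of $\mathcal{S}^{0,0}_{D,s}$ (symmetric real kernel) and $\mathcal{S}^{0,0}_{D,s}[\hat\psi_i]=$ const on each $\partial D_l$, this pairing reduces to integrals of $w$ over each $\partial D_l$, which vanish. I expect the second piece to vanish on these grounds. This is the step I would check most carefully; if it fails, its value has to be computed and absorbed into $Q_{ij}$.

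For the first piece, $\frac1{v_b^2}\int_{D_i}\mathcal{S}^{a,\tau_b}_{D,s,1}[\hat\psi_j]$, I use \eqref{eq: Gs1} to write the kernel as $\frac{\mathrm{i}}{|Y|}\hat G^a_{s,1}-\frac{\mathrm{i}\tau_b}{|Y|}x_dy_d$. The $x_dy_d$ part gives $-\frac{\mathrm{i}\tau_b}{|Y|}\hat m_j\int_{D_i}x_d$. The $\hat G$ part is a function of $x$ that is not constant on $D_i$, so matching the stated form $|D_i|(Q_{ij}-\tau_b\hat m_i\hat m_j)$ requires converting the volume integral into a boundary pairing with $\hat\psi_i$. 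For this I would use the identity $\int_{D_i}f=-\frac{1}{|D_i|}$-weighted pairings obtained from Green's formula against $\mathcal{S}^{0,0}_{D,s}[\hat\psi_i]$ together with the recurrence $\Delta G_{s,1}=0$ near $D$. This conversion is the main obstacle; I would first test it on the $x_dy_d$ term, where it must produce $|D_i|\hat m_i$.

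Finally, antisymmetry $Q_{ij}+Q_{ji}=0$ follows directly from $\hat G^a_{s,1}(x,y)=-\hat G^a_{s,1}(y,x)$ by swapping the integration variables.
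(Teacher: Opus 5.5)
\textbf{There is a genuine gap, in the $\mathcal{H}_{3,0}$ step.} Your facts (a) and (b) are right. With them, your treatment of $\mathcal{H}_0$, $\mathcal{H}_{1,0}$, $\mathcal{H}_{0,1}$, $\mathcal{H}_{2,0}$, $\mathcal{H}_{1,1}$ and of the antisymmetry of $Q$ is correct. For $\mathcal{H}_{1,1}$, applying (b) to $\widetilde{\mathcal{A}}_{0}^{-1}\mathcal{A}_{1,0}[\hat\psi_j]$ is even quicker than the paper's explicit computation.

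The false step is your claim that $\int_{D_i}\mathcal{S}^{0,0}_{D,s}[w]\,\mathrm{d}x=0$ for $w=\widetilde{\mathcal{A}}_{0}^{-1}\mathcal{A}_{1,0}[\hat\psi_j]\in L^2_0(\partial D)$. By Fubini, $\int_{D_i}\mathcal{S}^{0,0}_{D,s}[w]\,\mathrm{d}x=\int_{\partial D}w\,N_i\,\mathrm{d}\sigma$ with $N_i(y)=\int_{D_i}G^{0,0}_s(x,y)\,\mathrm{d}x$. This volume potential is in general not constant on the components $\partial D_l$, so zero mean of $w$ on each $\partial D_l$ gives nothing. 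Green's identity produces pairings with $\Phi$ and $\partial_\nu\Phi$, where $\Delta\Phi=1$, not with the $\hat\psi_l$.

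The test you propose exposes this.
\begin{itemize}
\item The $-\frac{\mathrm{i}\tau_b}{|Y|}x_dy_d$ part of $G^{a,\tau_b}_{s,1}$ contributes $-\frac{\mathrm{i}\tau_b}{|Y|}\hat m_j\nu_d$ to $\mathcal{A}_{1,0}[\hat\psi_j]$.
\item One has $\widetilde{\mathcal{A}}_{0}^{-1}[\nu_d]=\tilde\psi-\mathcal{P}[\tilde\psi]$ with $\tilde\psi=(\mathcal{S}^{0,0}_{D,s})^{-1}[x_d]$, and its single layer equals $x_d-\hat m_i$ in $D_i$.
\item Hence the $\tau_b$-parts of your two pieces are $-\frac{\mathrm{i}\tau_b\hat m_j}{v_b^2|Y|}\int_{D_i}x_d\,\mathrm{d}x$ and $\frac{\mathrm{i}\tau_b\hat m_j}{v_b^2|Y|}\bigl(\int_{D_i}x_d\,\mathrm{d}x-|D_i|\hat m_i\bigr)$.
\end{itemize}
Only their sum is the required $-\frac{\mathrm{i}\tau_b}{v_b^2|Y|}|D_i|\hat m_i\hat m_j$. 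In general $\int_{D_i}x_d\,\mathrm{d}x\neq|D_i|\hat m_i$: for $N=1$, $\hat m$ is the average of $x_d$ over $\partial D$ weighted by the equilibrium density, which is heavier on the side facing the Dirichlet wall. So the first piece cannot be brought into the form ``$|D_i|$ times a pairing with $\hat\psi_i$'' by itself. Nor can anything be ``absorbed into $Q_{ij}$'', which is a fixed boundary pairing.

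\textbf{The missing idea is to treat the two pieces jointly, as the paper does.} Put $h_j:=(\mathcal{S}^{0,0}_{D,s})^{-1}\mathcal{S}^{a,\tau_b}_{D,s,1}[\hat\psi_j]$.
\begin{itemize}
\item $\Delta_xG^{a,\tau_b}_{s,1}(x,y)=0$ for $x,y$ near $\overline{D}$: the $n=1$ recurrence has no source at $\xi=0$, and its constant term cancels between the two image terms.
\item So $\mathcal{S}^{a,\tau_b}_{D,s,1}[\hat\psi_j]$ is harmonic in $D$ with normal derivative $(\mathcal{K}^{-a,\tau_b}_{D,s,1})^*[\hat\psi_j]$. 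It therefore coincides with $\mathcal{S}^{0,0}_{D,s}[h_j]$ in $D$.
\item Comparing interior Neumann traces gives $\mathcal{A}_{1,0}[\hat\psi_j]=\mathcal{A}_0[h_j]$, hence $w=h_j-\mathcal{P}[h_j]$.
\end{itemize}
Consequently, in $D$,
\[
\mathcal{S}^{a,\tau_b}_{D,s,1}[\hat\psi_j]-\mathcal{S}^{0,0}_{D,s}[w]=\mathcal{S}^{0,0}_{D,s}\mathcal{P}[h_j],
\]
which is constant on each $D_l$. This, and not a conversion of either piece separately, is where the factor $|D_i|$ comes from.

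The value of this constant on $D_i$ is $-\sum_l\bigl(\int_{\partial D_l}h_j\,\mathrm{d}\sigma\bigr)(C^{-1})_{li}$. By the symmetry of $\mathcal{S}^{0,0}_{D,s}$,
\[
\int_{\partial D_l}h_j\,\mathrm{d}\sigma=\langle\mathcal{S}^{0,0}_{D,s}[h_j],\psi_l\rangle=\langle\mathcal{S}^{a,\tau_b}_{D,s,1}[\hat\psi_j],\psi_l\rangle.
\]
Using $\psi_l=-\sum_tC_{lt}\hat\psi_t$ and the symmetry of $C$, the sum of your two pieces equals $\frac{|D_i|}{v_b^2}\langle\mathcal{S}^{a,\tau_b}_{D,s,1}[\hat\psi_j],\hat\psi_i\rangle$. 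Inserting \eqref{eq: Gs1} then gives $\mathrm{i}\frac{|D_i|}{v_b^2|Y|}(Q_{ij}-\tau_b\hat m_i\hat m_j)$.
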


\begin{proof}
By the explicit expression of $\widetilde{\mathcal{A}}_{0}^{-1}$ in \eqref{eq: inverse of A0}, we have, for any $\psi \in L^2(\partial D)$, 
\begin{align}\label{eq: integral of A0 inverse}
    \int_{\partial D_i}
        \widetilde{\mathcal{A}}_{0}^{-1}[\psi]
        \,\mathrm{d}\sigma
    = \int_{\partial D_i}
        \psi
        \,\mathrm{d}\sigma.
\end{align}
In particular, since $\widetilde{\mathcal{A}}_{0}^{-1}[\hat{\psi}_j]
= \hat{\psi}_j$, the integrals of $\mathcal{H}_{0}[\hat{\psi}_{j}]$,
$\mathcal{H}_{1,0}[\hat{\psi}_{j}]$, $\mathcal{H}_{0,1}[\hat{\psi}_{j}]$,
and $\mathcal{H}_{2,0}[\hat{\psi}_{j}]$ over $\partial D_i$ follow directly from
\cref{lem: integral of An}. It remains to compute the integrals of $\mathcal{H}_{1,1}[\hat{\psi}_{j}]$ and $\mathcal{H}_{3,0}[\hat{\psi}_{j}]$ over $\partial D_i$. We begin with the term $\mathcal{H}_{1,1}$ over $\partial D_i$. By \cref{lem: integral of An},
\begin{align*}
    &\int_{\partial D_i}
        \mathcal{H}_{1,1}[\hat{\psi}_j]
        \,\mathrm{d}\sigma
    =  \int_{\partial D_i}
         \Big(
           -\mathcal{A}_{1,1}
           + \mathcal{A}_{0,1}\widetilde{\mathcal{A}}_{0}^{-1}
             \mathcal{A}_{1,0}
         \Big)[\hat{\psi}_j]
         \,\mathrm{d}\sigma \\
    = & -\frac{\mathrm{i}(\tau_m-\tau_b)}{|Y|}
          m_i \hat{m}_j-
      \int_{\partial D_i}
\widetilde{\mathcal{A}}_{0}^{-1}
             \mathcal{A}_{1,0}[\hat{\psi}_j]
         \,\mathrm{d}\sigma .
\end{align*}
We evaluate the second integral by computing the action of $\widetilde{\mathcal{A}}_{0}^{-1}\mathcal{A}_{1,0}$ on $\hat{\psi}_j$. We let $ h_j :=\big(\mathcal{S}^{0,0}_{D,s}\big)^{-1}\mathcal{S}^{a,\tau_b}_{D,s,1}[\hat{\psi}_j]$, so that $\mathcal{S}^{0,0}_{D,s}[h_j]= \mathcal{S}^{a,\tau_b}_{D,s,1}[\hat{\psi}_j]$.
By the jump relation, we obtain
\begin{align*}
     \left(-\frac{1}{2}\mathcal{I}
          + (\mathcal{K}^{0,0}_{D,s})^*\right)[h_j]
      = (\mathcal{K}^{-a,\tau_b}_{D,s,1})^*[\hat{\psi}_j].
\end{align*}
It then follows that
\begin{align*}
    \Big(-\frac{1}{2}\mathcal{I}
         + (\mathcal{K}^{0,0}_{D,s})^*
         + \mathcal{P}\Big)^{-1}
    (\mathcal{K}^{-a,\tau_b}_{D,s,1})^*[\hat{\psi}_j]
    = h_j - \mathcal{P}[h_j],
\end{align*}
which is equivalent to $\widetilde{\mathcal{A}}_{0}^{-1}\mathcal{A}_{1,0}[\hat{\psi}_j]= h_j - \mathcal{P}[h_j]$. Since
$h_j - \mathcal{P}[h_j] \in L^2_0(\partial D)$,
its integral over each connected component
$\partial D_i$ vanishes.
Therefore,
\begin{align*}
     \int_{\partial D_i}
        \widetilde{\mathcal{A}}_{0}^{-1}
        \mathcal{A}_{1,0}[\hat{\psi}_j]
        \,\mathrm{d}\sigma= 0.
\end{align*}
This completes the computation of the integral of $\mathcal{H}_{1,1}[\hat{\psi}_j]$.

We now compute the integral of $\mathcal{H}_{3,0}[\hat{\psi}_j]$ over $\partial D_i$.
Again by \cref{lem: integral of An},
\begin{align*}
    &\int_{\partial D_i}
        \mathcal{H}_{3,0}[\hat{\psi}_j]
        \,\mathrm{d}\sigma
    = \int_{\partial D_i}
         (
           -\mathcal{A}_{3,0}
           + \mathcal{A}_{2,0}\widetilde{\mathcal{A}}_{0}^{-1}
             \mathcal{A}_{1,0}
         )[\hat{\psi}_j]
         \,\mathrm{d}\sigma 
    = \frac{1}{v_b^2}
       \int_{D_i}
         (
           \mathcal{S}^{a,\tau_b}_{D,s,1}
         - \mathcal{S}^{0,0}_{D,s}
           \widetilde{\mathcal{A}}_{0}^{-1}\mathcal{A}_{1,0}
         )[\hat{\psi}_j]
         \,\mathrm{d}x.
\end{align*}
Using $\mathcal{S}^{a,\tau_b}_{D,s,1}[\hat{\psi}_j]=\mathcal{S}^{0,0}_{D,s}[h_j]$ and $\mathcal{S}^{0,0}_{D,s}
           \widetilde{\mathcal{A}}_{0}^{-1}\mathcal{A}_{1,0}
[\hat{\psi}_j]=\mathcal{S}^{0,0}_{D,s}[h_j]-\mathcal{S}^{0,0}_{D,s}
            \mathcal{P}[h_j]$, we obtain
\begin{align*}
    \int_{\partial D_i}
        \mathcal{H}_{3,0}[\hat{\psi}_j]
        \,\mathrm{d}\sigma=\frac{1}{v_b^2}
        \int_{D_i}
            \mathcal{S}^{0,0}_{D,s}
            \mathcal{P}[h_j]
            \,\mathrm{d}x.
\end{align*}           
By the definition of the projection operator $\mathcal{P}$,
\begin{align*}
    \mathcal{S}^{0,0}_{D,s}\mathcal{P}[h_j]
    = \sum_{l=1}^{N}
        \Big( \int_{\partial D_l} h_j \,\mathrm{d}\sigma \Big)
        \mathcal{S}^{0,0}_{D,s}[\hat{\psi}_l] 
     =-\sum_{l=1}^{N}\sum_{t=1}^{N}
        \Big( \int_{\partial D_l} h_j \,\mathrm{d}\sigma \Big)
        (C^{-1})_{lt}\int_{\partial D_i} \chi_{\partial D_t}\,\mathrm{d}x,
\end{align*}
so that
\begin{align*}
    \int_{\partial D_i}
        \mathcal{H}_{3,0}[\hat{\psi}_j]
        \,\mathrm{d}\sigma
    = -\frac{|D_i|}{v_b^2}
      \sum_{l=1}^{N}
        \Big( \int_{\partial D_l} h_j \,\mathrm{d}\sigma \Big)
        (C^{-1})_{li}.
\end{align*}
To evaluate the integral of $h_j$ over $\partial D_i$, we note that
\begin{align*}
    \int_{\partial D_l} h_j \,\mathrm{d}\sigma
    &= \big\langle
          (\mathcal{S}^{0,0}_{D,s})^{-1}
          \mathcal{S}^{a,\tau_b}_{D,s,1}[\hat{\psi}_j],
          \chi_{\partial D_l}
       \big\rangle
     = -\sum_{t=1}^{N} C_{lt}
        \big\langle 
          \mathcal{S}^{a,\tau_b}_{D,s,1}[\hat{\psi}_j],
          \hat{\psi}_t
        \big\rangle.
\end{align*}
Since
$\sum_{l=1}^{N}(C^{-1})_{li} C_{lt} = \delta_{it}$, we conclude that
\begin{align*}
    \int_{\partial D_i}
        \mathcal{H}_{3,0}[\hat{\psi}_j]
        \,\mathrm{d}\sigma
    = \frac{|D_i|}{v_b^2}
       \left\langle
         \mathcal{S}^{a,\tau_b}_{D,s,1}[\hat{\psi}_j],
         \hat{\psi}_i
       \right\rangle 
    = \frac{|D_i|}{v_b^2}
       \int_{\partial D}\!\int_{\partial D}
         G_{s,1}^{a,\tau_b}(x,y)\,
         \hat{\psi}_j(y)\,
         \hat{\psi}_i(x)\,
         \mathrm{d}\sigma(y)\,
         \mathrm{d}\sigma(x).
\end{align*}
Using the decomposition of $G_{s,1}^{a,\tau_b}$ in~\eqref{eq: Gs1}, we obtain
\begin{align*}
        \int_{\partial D_i}
        \mathcal{H}_{3,0}[\hat{\psi}_j]
        \,\mathrm{d}\sigma
    = \mathrm{i}\,\frac{|D_i|}{v_b^2|Y|}
      \Big( Q_{ij} - \tau_b\, \hat{m}_i \hat{m}_j \Big).
\end{align*}
Finally, since  $\hat{G}^{a}_{s,1}(x,y) = -\hat{G}^{a}_{s,1}(y,x)$, a change of variables yields $Q_{ij} = -Q_{ji}$.
\end{proof}

According to the integral identity in \cref{lem: integral of Hn}, the matrix $A(\omega,\delta)_{ij}$ admits the following asymptotic expansion with respect to $\omega$ and $\delta$:
\begin{equation*}
    A(\omega,\delta)_{ij} =\, \omega^2 \frac{|D_i|}{v_b^2}(C^{-1})_{ij} 
    + \mathrm{i} \omega^3 \frac{|D_i|}{v_b^2|Y|} \left(\tau_b \hat{m}_i \hat{m}_j - Q_{ij}\right) 
    - \delta \delta_{ij}+ 
     \mathrm{i} \omega \delta \frac{(\tau_m-\tau_b)}{|Y|} m_i \hat{m}_j 
    + O((\omega^2 + \delta)^2).
\end{equation*}
In matrix form, this expansion can be rewritten as
\begin{align*}
    A(\omega,\delta) = \frac{\omega^2}{v_b^2} V C^{-1} 
    + \mathrm{i} \frac{\omega^3}{v_b^2|Y|} V \left(\tau_b \hat{m}\hat{m}^{\top} - Q\right) 
     - \delta I 
    + \mathrm{i} \omega \delta \frac{(\tau_m-\tau_b)}{|Y|} m\hat{m}^{\top} 
    + O((\omega^2 + \delta)^2),
\end{align*}
where $Q$ is the matrix with the entries $Q_{ij}$ defined in \eqref{def:qij}, and
$m$ and $\hat{m}$are  given by \eqref{def:mj}. To analyze the resonances of $A(\omega,\delta)$,
we introduce the following scaled matrix:
\begin{align}\label{eq: new A}
    \tilde{A}(\omega,\delta)
    :=  A(\omega,\delta) C/\delta
    = \lambda(\omega) V - C
      + \mathrm{i}\omega(\lambda B + E)
      + \mathcal{O}(\omega^2+\delta),
\end{align}
where the scaling parameter $\lambda$ and the matrices $B$ and $E$ are defined by
\begin{align}\label{eq: expression of s B E}
     \lambda = \frac{\omega^2}{\delta v_b^2},
    \qquad
    B = \frac{\tau_b V\hat{m}m^\top  - VQ C}{|Y|} ,
    \qquad
    E = \frac{(\tau_m-\tau_b) m m^\top }{|Y|} .
\end{align}
We now establish the following result concerning the low-frequency resonances of \eqref{eq: model}.

\begin{theorem}\label{thm: asymptotic of resonance frequency}
Under \cref{asmp: eigenvalue}, in the subwavelength regime, there exist $N$ resonant frequencies with the asymptotic expansions
\begin{align}
    \omega_j(\delta)
    = v_b\sqrt{\delta\lambda_j}
      - \mathrm{i}\,
        \frac{\tau_m\,v_b^2\, (m^{\top}u_j)^2}
             {2|Y|}\,\delta
      + \mathcal{O}(\delta^{3/2}),
    \quad 1 \le j \le N.
\end{align}
The associated eigenvectors $u_j(\delta)$ of $\tilde{A}(\omega_j(\delta), \delta)$ satisfy
\begin{equation*}
     u_{j}(\delta)
      = u_j-\mathrm{i}v_b\sqrt{\delta\lambda_j}\sum_{i \neq j}\frac{\tau_m(m^{\top} u_j)(m^{\top}u_i)- \lambda_j \lambda_i\,u_j^{\top} V Q V u_i}{(\lambda_j - \lambda_i)\,|Y|}\,u_i+\mathcal{O}(\delta).
\end{equation*}
Here, $(\lambda_j, u_j)$ denotes the eigenpair of the capacitance matrix defined in Lemma~\ref{lem:spC}.
\end{theorem}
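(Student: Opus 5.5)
Resonances are exactly the values of $\omega$ for which $\tilde{A}(\omega,\delta)$ in \eqref{eq: new A} is singular, and I would find them by perturbation. Set $\omega = \sqrt{\delta}\,\mu$, so that $\lambda(\omega)=\mu^2/v_b^2$ and $\omega^2+\delta=O(\delta)$. Then \eqref{eq: new A} reads
\begin{align*}
\tilde{A}(\omega,\delta) = \lambda(\omega)V - C + \mathrm{i}\sqrt{\delta}\,\mu\,(\lambda B+E) + O(\delta).
\end{align*}
At leading order the singularity condition is $\det(\lambda V - C)=0$. By \cref{lem:spC} and \cref{asmp: eigenvalue} this gives $N$ simple roots $\lambda=\lambda_j$, that is, $\mu_j^{(0)}=v_b\sqrt{\lambda_j}$. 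The existence of a nearby true root $\omega_j(\delta)$ of $\det A(\omega,\delta)=0$ follows from analyticity in $\omega$ and Rouché's theorem (Gohberg--Sigal theory) applied on a small disc around $v_b\sqrt{\delta\lambda_j}$ in the rescaled variable $\mu$. The perturbation is $O(\sqrt{\delta})$, and the leading determinant has a simple zero in $\mu$ because the $\lambda_j$ are simple and positive.

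For the correction I would make the ansatz $\lambda(\omega_j)=\lambda_j+\sqrt{\delta}\,\lambda_j^{(1)}+O(\delta)$ and $u_j(\delta)=u_j+\sqrt{\delta}\,w+O(\delta)$ with $w\in\operatorname{span}\{u_i\}_{i\neq j}$. Substituting into $\tilde{A}u_j(\delta)=0$ and collecting terms of order $\sqrt{\delta}$ gives
\begin{align*}
(\lambda_j V - C)w + \lambda_j^{(1)}Vu_j + \mathrm{i}v_b\sqrt{\lambda_j}\,(\lambda_j B+E)u_j = 0 .
\end{align*}
Left-multiplying by $u_j^\top$ and using $u_j^\top(\lambda_jV-C)=0$ (by symmetry of $C$ and $V$) and $u_j^\top V u_j=1$ yields
\begin{align*}
\lambda_j^{(1)} = -\mathrm{i}v_b\sqrt{\lambda_j}\,u_j^\top(\lambda_jB+E)u_j .
\end{align*}

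I then evaluate $u_j^\top(\lambda_jB+E)u_j$ from \eqref{eq: expression of s B E}.
\begin{itemize}
\item The $Q$ term is $u_j^\top VQCu_j=\lambda_j\,u_j^\top VQVu_j$, which vanishes because $Q$ is antisymmetric (\cref{lem: integral of Hn}).
\item For the $\tau_b$ term, use $m=C\hat m$ to get $u_j^\top V\hat m=\lambda_j^{-1}u_j^\top C\hat m=\lambda_j^{-1}m^\top u_j$. Hence $\lambda_j\tau_b\,u_j^\top V\hat m\,m^\top u_j/|Y|=\tau_b(m^\top u_j)^2/|Y|$.
\item The $E$ term gives $(\tau_m-\tau_b)(m^\top u_j)^2/|Y|$.
\end{itemize}
The $\tau_b$ contributions cancel, leaving $\lambda_j^{(1)}=-\mathrm{i}v_b\sqrt{\lambda_j}\,\tau_m(m^\top u_j)^2/|Y|$. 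Inverting $\omega=v_b\sqrt{\delta\lambda}$, so that $\omega_j=v_b\sqrt{\delta}\bigl(\sqrt{\lambda_j}+\sqrt{\delta}\,\lambda_j^{(1)}/(2\sqrt{\lambda_j})\bigr)$, gives exactly the stated $-\mathrm{i}\tau_m v_b^2(m^\top u_j)^2\delta/(2|Y|)$. Its imaginary part is negative because $m$ is real, so $(m^\top u_j)^2 \ge 0$.

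For the eigenvector, project the order-$\sqrt{\delta}$ equation onto $u_i$ with $i\neq j$. Since $u_i^\top(\lambda_jV-C)w=(\lambda_j-\lambda_i)u_i^\top Vw$, we get
\begin{align*}
u_i^\top V w = -\mathrm{i}v_b\sqrt{\lambda_j}\,\frac{u_i^\top(\lambda_jB+E)u_j}{\lambda_j-\lambda_i}.
\end{align*}
It remains to compute $u_i^\top(\lambda_jB+E)u_j$, and this is the step I expect to need the most care.
\begin{itemize}
\item The $\tau_b$ term gives $\lambda_j\tau_b\,u_i^\top V\hat m\,m^\top u_j=\tau_b(\lambda_j/\lambda_i)(m^\top u_i)(m^\top u_j)$. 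This does not cancel against the $-\tau_b$ part of $E$ unless one accounts for it, so matching the stated formula requires checking it carefully. The stated formula suggests the $\tau_b$ contributions ultimately drop out, possibly through symmetrization or a careful re-derivation of $B$.
\item The $Q$ term gives $-\lambda_j\,u_i^\top VQCu_j/|Y|=-\lambda_j^2\,u_i^\top VQVu_j/|Y|$. After using antisymmetry and the sign conventions, this should reduce to the stated $\lambda_j\lambda_i$ form.
\end{itemize}
I would verify this bookkeeping explicitly. The remainder estimates, $O(\delta^{3/2})$ for $\omega_j$ and $O(\delta)$ for $u_j(\delta)$, follow from the $O(\omega^2+\delta)$ remainder in \eqref{eq: new A} together with the uniform invertibility of $\lambda_jV-C$ on the $V$-orthogonal complement of $u_j$, which is guaranteed by the simple-eigenvalue assumption.
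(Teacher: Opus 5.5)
Your derivation of the resonant frequencies is correct and matches the paper's argument. Both substitute an expansion into $\tilde{A}(\omega_j(\delta),\delta)u_j(\delta)=0$ and project the order-$\sqrt{\delta}$ equation onto $u_j$. Both then use $m=C\hat m$ and the antisymmetry of $Q$ to cancel the $\tau_b$ and $Q$ contributions. The paper expands $\omega$ rather than $\lambda$ and invokes the implicit function theorem instead of Rouch\'e/Gohberg--Sigal, but these differences are cosmetic.

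The eigenvector part has a genuine gap, and it cannot be closed the way you suggest. Your projection $u_i^\top Vw=-\mathrm{i}v_b\sqrt{\lambda_j}\,u_i^\top(\lambda_jB+E)u_j/(\lambda_j-\lambda_i)$ is the correct one for a vector satisfying $\tilde{A}u_j(\delta)=0$. Your partial evaluations are also correct, and completing them gives
\[
|Y|\,u_i^\top(\lambda_jB+E)u_j=\Bigl(\tau_m+\tau_b\,\frac{\lambda_j-\lambda_i}{\lambda_i}\Bigr)(m^\top u_i)(m^\top u_j)+\lambda_j^2\,u_j^\top VQVu_i .
\]
Neither of the reductions you hope for happens:
\begin{itemize}
\item The $\tau_b$ terms leave a residual $\tau_b(\lambda_j-\lambda_i)\lambda_i^{-1}(m^\top u_i)(m^\top u_j)$.
\item Antisymmetry of $VQV$ turns $-\lambda_j^2u_i^\top VQVu_j$ into $+\lambda_j^2u_j^\top VQVu_i$, not into $-\lambda_j\lambda_i\,u_j^\top VQVu_i$.
\end{itemize}
No symmetrization or re-derivation of $B$ fixes this, because $B=(\tau_bV\hat m m^\top-VQC)/|Y|$ is genuinely non-symmetric.

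The paper obtains the stated expression by writing the coefficient as $u_j^\top(\lambda_jB+E)u_i$, with the indices transposed relative to yours. With that ordering, the same manipulations give exactly $\tau_m(m^\top u_j)(m^\top u_i)-\lambda_j\lambda_i\,u_j^\top VQVu_i$. However, the transposed coefficient is what one obtains for the null vector of $\tilde{A}(\omega_j(\delta),\delta)^\top$. That is a left eigenvector $\ell_j(\delta)$ with $\ell_j(\delta)^\top\tilde{A}(\omega_j(\delta),\delta)=0$, not the right null vector.

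So your step ``this should reduce to the stated form'' is false, and here is what your method actually establishes:
\begin{itemize}
\item The stated formula holds verbatim for the left null vector. Rerun your projection with $B^\top$ in place of $B$; the frequency is unchanged since $u_j^\top B^\top u_j=u_j^\top Bu_j$.
\item The right null vector $u_j(\delta)$ has the coefficients displayed above.
\end{itemize}
Rather than forcing the two expressions to agree, state which eigenvector the formula refers to.
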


\begin{proof}
From the expression of $\tilde{A}(\omega,\delta)$ in \eqref{eq: new A}, we observe that $(\lambda_j,u_j)$ satisfies the leading-order. Applying the implicit function theorem as in \cite{feppon2022modal}, we obtain analytic functions $\omega_j(\delta)$ and $u_j(\delta)$, defined for $\delta$ sufficiently small, which solve the nonlinear eigenvalue problem
\begin{align}\label{eq: modified nonlinear eigenvalue}
    \tilde{A}(\omega_j({\delta}), \delta)[u_j({\delta})] = 0,\quad \delta \to 0.
\end{align}
We seek asymptotic expansions of the form
\begin{align*}
    \omega_j(\delta)= \omega_j \sqrt{\delta}-\mathrm{i}\omega_{j,1}\delta+\mathcal{O}(\delta^{3/2}),\quad
    u_j(\delta)= u_j - \mathrm{i}u_{j,1}\sqrt{\delta} + \mathcal{O}(\delta),
\end{align*}
where $\omega_j = v_b\sqrt{\lambda_j}$. Observe that $\lambda_j(\delta)=\omega^2_j(\delta)/(v_b^2\delta)= \lambda_j+ 2\mathrm{i}\omega_j\omega_{j,1}/v_b^2 \sqrt{\delta}+\mathcal{O}(\delta)$. Substituting these expansions into \eqref{eq: modified nonlinear eigenvalue} and collecting terms of equal order, we obtain
\begin{align*}
     (\lambda_j V - C)u_j
    + \mathrm{i}\!\left[\omega_j(\lambda_j B + E)u_j-
        (\lambda_j V - C)u_{j,1}
        - (2\omega_j\omega_{j,1}/v_b^2)V u_j
      \right]\sqrt{\delta}
    + \mathcal{O}(\delta)
    = 0.
\end{align*}
The leading-order equation is satisfied by construction. Projecting the first-order equation onto $u_j$ and expanding $u_{j,1}$ in the eigenbasis $\{u_i\}_{i=1}^N$, we obtain the first-order corrections
\begin{align*}
    \omega_{j,1}
      = \frac{v_b^2u_j^{\top}(\lambda_j B + E)u_j}{2}, \quad 
    u_{j,1}
      = \omega_j\sum_{i \neq j}
          \frac{
            u_j^{\top}(\lambda_j B + E)u_i
          }{\lambda_j - \lambda_i}
          \,u_i.
\end{align*}
Substituting the expressions of $B$ and $E$ from \eqref{eq: expression of s B E} into the above formulas gives
\begin{align*}
      \omega_{j,1}
      = \frac{v_b^2}{2}\frac{\lambda_ju_j^{\top} \left[\tau_b V \hat{m} m^\top  - VQ C\right] u_j + u_j^{\top} \left[(\tau_m-\tau_b) m m^\top \right] u_j }{|Y|}= \frac{\tau_m v_b^2\, (m^{\top} u_j)^2}{2|Y|},
\end{align*}
where we used the antisymmetry of $Q$, which implies $u_j^{\top}Q u_j = 0$. Similarly, the correction term $u_{j,1}$ can be written explicitly as
\begin{equation*}
     u_{j,1}
      = \omega_j\sum_{i \neq j}
          \frac{\tau_m (m^{\top} u_j)(m^{\top} u_i)
            - \lambda_j\lambda_i \,
              u_j^{\top} V Q V u_i
          }{
            (\lambda_j - \lambda_i)\,|Y|
          }\,u_i.
\end{equation*}
This completes the proof.
\end{proof}

\begin{remark}
In the case of a single resonator ($N=1$), the resonant frequency admits the asymptotic expansion
\begin{align*}
    \omega = v_b \sqrt{\frac{\mathrm{Cap}(D)}{|D|}} \delta^{1/2} 
    - \mathrm{i}  \frac{\tau_m v_b^2 m^2}{2|Y| |D|} \delta 
    + O(\delta^{3/2}),
\end{align*}
where $\mathrm{Cap}(D)$ denotes the periodic capacity of the resonator $D$, as defined in \cref{def:periodic_capacitance} (with $N=1$), and $|D|$ is the volume of $D$.
\end{remark}

\subsection{Approximation of the scattering problem}

We are now in a position to derive an approximation of the scattered field. As a first step, we solve the finite dimensional system \eqref{eq: finite dimensional equation} to determine the unknown coefficient vector $z$.

\begin{proposition}\label{prop: asymptotic of solution z}
Let $\lambda_j(\omega)$ and $u_j(\omega)$ denote, respectively, the eigenvalue and eigenvector associated with the nonlinear eigenvalue problem
\begin{align*}
    \tilde{A}(\omega, \lambda_j(\omega))[u_j(\omega)] = 0.
\end{align*} 
Assume that $0 < \omega < K \sqrt{\delta}$ for some constant $K > 0$, and that \cref{asmp: eigenvalue} holds. Then, the solution to~\eqref{eq: finite dimensional equation} admits the asymptotic representation
\begin{align}\label{eq: asymptotic of z}
    z  = 2\mathrm{i} \tau_m \omega \sum_{j=1}^N 
          \frac{m^{\top}u_{j}}
               {\lambda_j(\omega)-\lambda(\omega)}\,C u_j\,\big(1+\mathcal{O}(\omega)\big).
\end{align}
Moreover, the eigenvalue $\lambda_j(\omega)$ admits the following expansion:
\begin{align}\label{eq: lambdaj1 omega}
    \lambda_j(\omega) = \lambda_j-\omega \lambda_{j,1}+\mathcal{O}(\omega^2),\quad \lambda_{j,1}=\frac{\tau_m (m^{\top} u_j)^2}{|Y|},
\end{align}
where the vector $m$ is defined in~\eqref{def:mj}, and the matrix $V$ is introduced in \cref{lem:spC}.
\end{proposition}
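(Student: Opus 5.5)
We follow the same reduction used for \Cref{thm: asymptotic of resonance frequency}: rescale \eqref{eq: finite dimensional equation} by $C/\delta$, diagonalise with the $V$-orthonormal eigenbasis of \Cref{lem:spC}, and read off $z$ mode by mode.

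\textbf{Step 1 (reformulation).} Set $z = C y$. Then \eqref{eq: finite dimensional equation} becomes $\tilde{A}(\omega,\delta)\, y = F(\omega,\delta)$ with $\tilde A$ as in \eqref{eq: new A}. For fixed real $\omega$, $\tilde{A}(\omega,\delta) = \lambda(\omega)V - M(\omega)$, where
\[
M(\omega) := C - \mathrm{i}\omega(\lambda B + E) + \mathcal{O}(\omega^2+\delta).
\]
Under the regime $0<\omega<K\sqrt\delta$ we have $\lambda=\omega^2/(\delta v_b^2)=\mathcal{O}(1)$. Hence $\delta = \mathcal{O}(\omega^2)$ is not usable as a lower bound, but every remainder $\mathcal{O}(\omega^2+\delta)$ is at worst $\mathcal{O}(\omega^2)$ relative to $\omega$ terms only once we note $\omega\gtrsim$ nothing. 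We therefore keep the remainders in the form $\mathcal{O}(\omega^2+\delta)$ and absorb them into the factor $(1+\mathcal{O}(\omega))$, using $\delta\lesssim$ the leading $\mathcal{O}(1)$ terms.

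Define $\lambda_j(\omega)$ and $u_j(\omega)$ as the generalized eigenpairs $M(\omega)u_j(\omega)=\lambda_j(\omega)Vu_j(\omega)$, so that $\tilde A(\omega,\lambda_j(\omega))u_j(\omega)=0$. By \cref{asmp: eigenvalue} the eigenvalues are simple, so analytic perturbation theory applies. The first-order correction is the Rayleigh quotient
\[
\lambda_j(\omega)=\lambda_j-\mathrm{i}\omega\, u_j^\top(\lambda_j B+E)u_j+\mathcal{O}(\omega^2).
\]
Using \eqref{eq: expression of s B E}, the relation $m=C\hat m$, the identity $Cu_j=\lambda_jVu_j$, and the antisymmetry of $Q$ (exactly as in the proof of \Cref{thm: asymptotic of resonance frequency}), this quotient equals $\tau_m(m^\top u_j)^2/|Y|$. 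This gives \eqref{eq: lambdaj1 omega}, up to tracking the factor $\mathrm{i}$ in the convention for $\lambda_{j,1}$: the displayed formula absorbs it, and I would double-check this sign/phase against the later use of $\lambda_{j,1}$.

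\textbf{Step 2 (spectral solve).} Using biorthogonality of the perturbed eigenvectors, which reduces to $u_i^\top V u_j=\delta_{ij}$ plus $\mathcal{O}(\omega)$ by \Cref{lem:spC}, we get
\[
\tilde A^{-1}=\sum_j \frac{u_j(\omega)u_j(\omega)^\top}{\lambda(\omega)-\lambda_j(\omega)}\,(1+\mathcal{O}(\omega)).
\]
This is valid when $\lambda$ is away from the $\lambda_j(\omega)$; near a resonance the $j$-th term dominates, and the correction is still relative.

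\textbf{Step 3 (source term).} We need $F=F(\omega,\delta)$ to leading order. Since $\int_{\partial D_j}\mathcal{H}_0[\psi]=\int_{\partial D_j}\psi$ by \eqref{eq: integral of A0 inverse}, and $\mathcal{H}=\mathcal{H}_0+\mathcal{O}(\omega+\delta)$, it suffices to integrate \eqref{eq: asymptotic of F(omega)} over $\partial D_j$. The first term, $\int_{\partial D_j}\nu_d$, vanishes by the divergence theorem. For the second term, \Cref{lem: integral of Ksn}(i) replaces $\tfrac12\mathcal I+\mathcal K^*$ by $\mathcal I$, and \eqref{eq: integral of Sinverse_xd} then yields $F_j=-2\mathrm{i}\tau_m\omega\, m_j(1+\mathcal{O}(\omega))$. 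The $\mathcal{O}(\omega^2)$ remainder in $\mathcal F$ and the $\mathcal{O}(\omega)$ remainder in $\mathcal H$ are both relatively $\mathcal{O}(\omega)$.

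\textbf{Step 4 (assembly).} Combining the previous steps,
\[
y=\tilde A^{-1}F=2\mathrm{i}\tau_m\omega\sum_j\frac{m^\top u_j}{\lambda_j(\omega)-\lambda(\omega)}\,u_j\,(1+\mathcal{O}(\omega)),
\]
and multiplying by $C$ gives \eqref{eq: asymptotic of z}.

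\textbf{Main obstacle.} The hard part is justifying that all replacements, namely $u_j(\omega)\to u_j$ in the numerators and $\mathcal H\to\mathcal H_0$ in $F$, produce only relative $(1+\mathcal{O}(\omega))$ errors uniformly, including near resonance where $\lambda_j(\omega)-\lambda$ is small. My plan is to keep $\lambda_j(\omega)$ exact in the denominators and expand only the numerators, which are analytic in $\omega$. I would then handle the $\mathcal{O}(\delta)$ terms through $\delta\le \omega^2/(\lambda_{\min}v_b^2)$-type comparisons on the relevant band. This is where the hypothesis $\omega<K\sqrt\delta$ is used, and it is the step whose bookkeeping I expect to be most delicate.
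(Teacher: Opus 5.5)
This is essentially the paper's proof: it also passes to $\tilde A(\omega,\delta)(C^{-1}z)=F$, obtains $\lambda_{j,1}=u_j^{\top}(\lambda_jB+E)u_j=\tau_m(m^{\top}u_j)^2/|Y|$ by projecting the order-$\omega$ equation of the nonlinear problem onto $u_j$, and computes $F=-2\mathrm{i}\tau_m\omega\,m+\mathcal{O}(\omega^2)$ exactly as in your Step 3; the paper's proof uses $\lambda_j(\omega)=\lambda_j-\mathrm{i}\omega\lambda_{j,1}+\mathcal{O}(\omega^2)$, so you are right that the displayed statement drops an $\mathrm{i}$, and where you write the resolvent the paper expands $C^{-1}z=\sum_jc_ju_j(\omega)$ and uses $\tilde A(\omega,\lambda)-\tilde A(\omega,\lambda_j(\omega))=(\lambda-\lambda_j(\omega))\bigl(V+\mathrm{i}\omega B+\cdots\bigr)$ --- the identity that makes the correction relative near resonance, and the reason to define $\lambda_j(\omega)$ as roots of the nonlinear problem rather than as eigenvalues of your $M$, which contains the physical $\lambda$ through $\lambda B$ and therefore does not give $\tilde A(\omega,\lambda_j(\omega))u_j(\omega)=0$. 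On your bookkeeping worry, $\omega<K\sqrt\delta$ gives only $\omega^2\lesssim\delta$ (so $\lambda=\mathcal{O}(1)$) and cannot yield your comparison $\delta\le\omega^2/(\lambda_{\min}v_b^2)$, which is a lower bound on $\omega/\sqrt\delta$; the paper tacitly works in the regime $\omega\sim\sqrt\delta$ as well, and without it the honest relative error is $\mathcal{O}(\omega+\delta)$ rather than $\mathcal{O}(\omega)$.
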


\begin{proof}
The finite-dimensional system~\eqref{eq: finite dimensional equation} can be rewritten equivalently as
\begin{align}\label{eq: modified finite dimensional system}
    \tilde{A}(\omega,\delta)\,(C^{-1}z)=F(\omega,\delta),
\end{align}
where $\tilde{A}(\omega,\delta)$ is defined in~\eqref{eq: new A}. Following the approach in~\cite{feppon2022modal}, we analyze this reformulated system. From \cref{thm: asymptotic of resonance frequency}, the resonance frequencies $\omega_j(\delta)$ were obtained asymptotically as $\delta \to 0$. Conversely, one may consider the inverse relation $\delta_j(\omega)$ as $\omega \to 0$. Equivalently, this amounts to studying the behavior of $\lambda_j(\omega)$ as $\omega \to 0$, through the relation $\lambda = \omega^2/(\delta v_b^2)$. For convenience, we introduce the similar notation $\tilde{A}(\omega,\lambda(\omega))= A(\omega,\delta)C/\delta$.

Following similar arguments to those in \cref{thm: asymptotic of resonance frequency}, we assume the asymptotic expansions
\begin{align}
   \lambda_j(\omega)= \lambda_j -\mathrm{i}\omega \lambda_{j,1}+ \mathcal{O}(\omega^2), \quad
    u_j(\omega) = u_j - \mathrm{i}\omega u_{j,1} +\mathcal{O}(\omega^2) .
\end{align}
Substituting these expansions into the nonlinear eigenvalue problem $\tilde{A}(\omega,\lambda_j(\omega))[u_j(\omega)] = 0$ and collecting terms of order $\omega$, we obtain
\begin{align*}
    (\lambda_j B + E)u_j-\lambda_{j,1}V u_j-(\lambda_j V - C)u_{j,1}=0.
\end{align*}
Projecting onto $u_j$ yields
\begin{align*}
    \lambda_{j,1}
    = u_j^{\top}(\lambda_j B + E)u_j=\frac{\tau_m\, (m^{\top}  u_j)^2}{|Y|}.
\end{align*}
By continuity, the family $\{u_j(\omega)\}_{j=1}^N$ forms a basis of $\mathbb{C}^N$ for $\omega$ sufficiently small. Hence, we may expand
\begin{align*}
    C^{-1}z
    = \sum_{j=1}^N c_j(\omega,\delta)\,u_j(\omega),
\end{align*}
where $c_j(\omega,\delta)$ are scalar coefficients. Using the property $\tilde{A}(\omega,\lambda_j(\omega))\,u_j(\omega)=0$, we obtain
\begin{align*}
    \tilde{A}(\omega,\lambda(\omega))(C^{-1}z)
    = \sum_{j=1}^N c_j(\omega,\delta)
      \big(
        \tilde{A}(\omega,\lambda(\omega))
        - \tilde{A}(\omega,\lambda_j(\omega))
      \big)
      u_j(\omega).
\end{align*}
Observe that
\begin{align*}
    \tilde{A}(\omega,\lambda(\omega))
    - \tilde{A}(\omega,\lambda_j(\omega))
    =
    \big(\lambda(\omega) - \lambda_j(\omega)\big)
    \big(
        V + \mathrm{i}\omega B
        + \mathcal{O}(\omega^2)
    \big).
\end{align*}
Therefore,
\begin{align*}
    \tilde{A}(\omega,\lambda(\omega))(C^{-1}z)
    =
    \sum_{j=1}^N
    c_j(\omega,\delta)
    \big(\lambda(\omega) - \lambda_j(\omega)\big)
    V u_j
    \big(1+\mathcal{O}(\omega)\big).
\end{align*}
Multiplying both sides of~\eqref{eq: modified finite dimensional system}
by $u_i^{\top}$ yields
\begin{align*}
    c_i(\omega,\delta)
    \big(\lambda(\omega)-\lambda_i(\omega)\big)
    \big(1+\mathcal{O}(\omega)\big)
    =
    u_i^{\top}F(\omega,\delta).
\end{align*}
Accordingly,
\begin{align*}
    c_i(\omega,\delta)
      =
      \frac{u_i^{\top}F(\omega,\delta)}
           {\lambda(\omega)-\lambda_i(\omega)}
      \big(1+\mathcal{O}(\omega)\big).
\end{align*}
Substituting this into the expansion for $C^{-1}z$ gives
\begin{align*}
    z
    = \sum_{j=1}^N
       c_j(\omega,\delta)\,
       C u_j(\omega) 
    = \sum_{j=1}^N
       \frac{
             u_j^{\top}F(\omega,\delta)}
            {\lambda(\omega)-\lambda_j(\omega)}
       \,C u_j
       \big(1+\mathcal{O}(\omega)\big).
\end{align*}

Next, we determine the asymptotic form of $F(\omega,\delta)$. From the expansions of $\mathcal{H}(\omega,\delta)$ and $\mathcal{F}(\omega)$ in
\eqref{eq: asymptotic of H} and \eqref{eq: asymptotic of F(omega)}, respectively, we obtain
\begin{align*}
    F_i(\omega,\delta)
      =& \int_{\partial D_i}\mathcal{H}_{0}\Big( -2\mathrm{i} \tau_m \nu_d \omega+ 2\mathrm{i} \tau_m\omega\Big(\frac{1}{2} \mathcal{I}+ (\mathcal{K}^{0,0}_{D,s})^*\Big)(\mathcal{S}^{0,0}_{D,s})^{-1}[x_d]\Big)\, d\sigma+ \mathcal{O}(\omega^2) \\
      =& 2\mathrm{i} \tau_m \omega\int_{\partial D_i}\Big(\frac{1}{2} \mathcal{I} + (\mathcal{K}^{0,0}_{D,s})^*\Big)(\mathcal{S}^{0,0}_{D,s})^{-1}[x_d] \, d\sigma+ \mathcal{O}(\omega^2).
\end{align*}
By \cref{lem: integral of Ksn} and \eqref{eq: integral of Sinverse_xd}, this reduces to
\begin{align}\label{eq: asymptotic of vector F}
    F(\omega,\delta)=-2\mathrm{i} \tau_m \omega\, m+ \mathcal{O}(\omega^2).
\end{align}
Substituting \eqref{eq: asymptotic of vector F} into the expression for $z$ immediately yields the asymptotic representation
\eqref{eq: asymptotic of z}, which completes the proof.
\end{proof}

\begin{remark}
Keldysh’s theorem (see, e.g., \cite{mennicken2003non,guttel2017nonlinear}) provides an alternative approach to describing the behavior of the resolvent $(\tilde{A}(\omega,s))^{-1}$ in a neighborhood of the eigenvalues
$s_j(\omega)$. In particular, the resolvent admits a decomposition into a singular part associated with each eigenmode and a holomorphic regular part.
\end{remark}

Building on \cref{prop: asymptotic of solution z}, we proceed to derive asymptotic analysis for the total field and the reflection coefficient.

\begin{theorem}\label{thm: approximation of total field}
The total field $u$ in $Y \times (h, +\infty)$ admits the decomposition
\begin{align*}
    u(x) = u^i(x) + u^p(x) + u^e(x),
\end{align*}
where $h := \min\{x_d \mid x \in \partial D\}$, and $u^p$ and $u^e$ denote the propagating and evanescent components, respectively.  
The propagating part $u^p$ consists of a single propagating mode, which can be represented as
\begin{align}
    u^p(x)
      = r(\omega)\,
        e^{\mathrm{i}k_m \hat{\theta}_{+} \cdot x},\quad x\in Y \times (h, +\infty),
\end{align}
where $r(\omega)$ is the reflection coefficient, given asymptotically by
\begin{align}\label{eq: asymptotic of reflection coefficient}
    r(\omega)
    = -1
      - \sum_{j=1}^{N}
        \frac{2\mathrm{i}\omega \lambda_{j,1}}
             {\lambda_j - \mathrm{i}\omega \lambda_{j,1}-\lambda(\omega)}
        \big(1 + \mathcal{O}(\omega)\big)
      + \mathcal{O}(\omega),
\end{align}
with $\lambda(\omega)$ and $\lambda_{j,1}$ being defined in \eqref{eq: expression of s B E} and \eqref{eq: lambdaj1 omega}, respectively. Furthermore, there exists a constant $ c := \inf_{\eta \in \Lambda^* \setminus \{0\}}
\sqrt{|\alpha + \eta|^2 - k^2}$ such that
\begin{align}
    |u^e(x)|
    \lesssim e^{-c(x_d - h)} \, \|\varphi\|_{L^2(\partial D)},\quad x\in Y \times (h, +\infty).
\end{align}
\end{theorem}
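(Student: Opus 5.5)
Since the region $Y\times(h,+\infty)$ lies above all resonators (in the paper's intended sense $h>\max_{x\in\partial D}x_d$), we start from the exterior representation \eqref{eq: integral formulation of u},
\[
u=\mathcal{S}^{\alpha,k_m}_{D,s}[\varphi]+\tilde u^i=\mathcal{S}^{\alpha,k_m}_{D,s}[\varphi]+u^i-e^{\mathrm{i}k_m\hat\theta_+\cdot x}.
\]
We then insert the Rayleigh--Bloch form \eqref{eq: quasi-Green's fun} of $G^{\alpha,k_m}_\#$ into $G^{\alpha,k_m}_s(x,y)=G^{\alpha,k_m}_\#(x-y)-G^{\alpha,k_m}_\#(x-y^*)$. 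For $x_d>y_d\ge0$ we have $|x_d-y_d|=x_d-y_d$ and $|x_d+y_d|=x_d+y_d$, so the kernel splits mode by mode. The $\eta=0$ term is
\[
\frac{e^{\mathrm{i}\alpha\cdot(x_\ell-y_\ell)}e^{\mathrm{i}k_m\hat\theta_d x_d}\,(-2\mathrm{i}\sin(k_m\hat\theta_d y_d))}{2\mathrm{i}k_m\hat\theta_d|Y|},
\]
which is proportional to $e^{\mathrm{i}k_m\hat\theta_+\cdot x}$. The $\eta\neq0$ terms carry factors $e^{-\sqrt{|\alpha+\eta|^2-k_m^2}\,x_d}$. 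We define $u^e$ as the sum over $\eta\ne0$, and set
\[
u^p=r(\omega)e^{\mathrm{i}k_m\hat\theta_+\cdot x},\qquad r(\omega)=-1+\frac{-1}{k_m\hat\theta_d|Y|}\int_{\partial D}e^{-\mathrm{i}\alpha\cdot y_\ell}\sin(k_m\hat\theta_d y_d)\varphi(y)\,d\sigma(y).
\]

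For the evanescent bound we write $x_d=(x_d-h)+h$ and $|e^{\pm(\cdot)y_d}|$ and factor out $e^{-c(x_d-h)}$. The remaining series is $\sum_{\eta\neq0}|\eta|^{-1}e^{-(\cdots)(h-y_d)}$ times Cauchy--Schwarz in $y$, which gives a bound by $\|\varphi\|_{L^2}$. This series converges whenever $h$ exceeds $\max y_d$ strictly. If it does not, we shift $h$ by a small margin and absorb the factor into the constant.

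For the reflection coefficient we expand $\sin(k_m\hat\theta_d y_d)=\omega\tau_m y_d+O(\omega^3)$ and $k_m\hat\theta_d=\omega\tau_m$, which reduces the formula to
\[
r(\omega)=-1-\frac{1}{|Y|}\int_{\partial D}y_d\,\varphi\,d\sigma\,(1+O(\omega)).
\]
Next we use \eqref{eq: solution of densities} with $\varphi=(\mathcal{S}^{\alpha,k_m}_{D,s})^{-1}\mathcal{S}^{\alpha,k_b}_{D,s}[\phi]-(\mathcal{S}^{\alpha,k_m}_{D,s})^{-1}[\tilde u^i]$. At leading order $\varphi\approx\phi\approx\phi_0=\sum_j z_j\hat\psi_j$, and $(\mathcal{S}^{0,0}_{D,s})^{-1}[\tilde u^i]=O(\omega)$ contributes only $O(\omega)$ to $r$. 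The remaining corrections $\phi-\phi_0=O(\omega+\delta)\|z\|+O(\delta\omega)$ are likewise lower order, since $\int_{\partial D}y_d\,\cdot\,$ is bounded. Hence
\[
\int_{\partial D}y_d\varphi\,d\sigma=\sum_j z_j\hat m_j+\text{lower order}=\hat m^\top z.
\]
Substituting \eqref{eq: asymptotic of z} and using $\hat m^\top C u_j=(C\hat m)^\top u_j=m^\top u_j$ from $m=C\hat m$ and the symmetry of $C$, we obtain
\[
-\frac{1}{|Y|}\hat m^\top z=-\sum_j\frac{2\mathrm{i}\tau_m\omega(m^\top u_j)^2}{|Y|(\lambda_j(\omega)-\lambda(\omega))}=-\sum_j\frac{2\mathrm{i}\omega\lambda_{j,1}}{\lambda_j(\omega)-\lambda(\omega)}.
\]
Replacing $\lambda_j(\omega)$ by $\lambda_j-\mathrm{i}\omega\lambda_{j,1}$ via \eqref{eq: lambdaj1 omega} (the sign convention in the proof) gives \eqref{eq: asymptotic of reflection coefficient}, with the $O(\omega^2)$ error in $\lambda_j(\omega)$ absorbed into the factor $(1+O(\omega))$.

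The main obstacle is the error bookkeeping near resonance. Here $z$ can be as large as $O(1)$ when $\lambda(\omega)$ is close to $\lambda_j$, since the denominator is $O(\omega)$. So the corrections $\phi-\phi_0$ and $\varphi-\phi$ must be shown to be relatively $O(\omega)$ with respect to $z$, and not merely absolutely small. I would handle this by writing every correction as a bounded operator applied to $z$, namely $\mathcal{H}[\phi_0]-\phi_0$ and the $\mathcal{S}$-difference of order $\omega$, which yields the multiplicative factor $(1+O(\omega))$. Sources independent of $z$ are then collected into the additive $O(\omega)$ term. This uses $\delta=O(\omega^2)$, which follows from the standing regime $0<\omega<K\sqrt\delta$ together with $\lambda$ bounded.
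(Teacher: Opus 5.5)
Your proposal is correct and follows the paper's proof essentially step for step. The shared steps are the Rayleigh--Bloch splitting of $\mathcal{S}^{\alpha,k_m}_{D,s}[\varphi]$ into the $\eta=0$ mode and the evanescent $\eta\neq0$ tail, the formula \eqref{eq: formula of reflection coefficient} for $r(\omega)$, the reduction $\int_{\partial D}y_d\varphi\,d\sigma\approx\hat m^{\top}z$, substitution of \eqref{eq: asymptotic of z} using $\hat m^{\top}Cu_j=m^{\top}u_j$, and the Cauchy--Schwarz bound for $u^e$. Your extra care (taking $h$ as the maximal height of $\partial D$ with a strict margin, the $\lambda_j-\mathrm{i}\omega\lambda_{j,1}$ sign convention, and the relative error bookkeeping) refines points the paper glosses over. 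One slip: $\delta=\mathcal{O}(\omega^2)$ needs $\lambda$ bounded away from zero, not the upper bound $\omega<K\sqrt{\delta}$.
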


\begin{proof}
By the definition of the quasi-periodic Green function \eqref{eq: quasi-Green's fun}, the scattered field $u^s$ in $Y \times (h,+\infty)$ admits the decomposition
\begin{align*}
    u^{s}(x)
    = \mathcal{S}^{\alpha, k_m}_{D, s}[\varphi](x)
    = u^{s,p}(x) + u^{s,e}(x),
\end{align*}
where $u^{s,p}$ and $u^{s,e}$ denote the propagating and evanescent components, respectively. The propagating component consists of a single mode, which can be written as
\begin{align*}
    u^{s,p}(x)
      &= \int_{\partial D} 
          \left(\frac{e^{\mathrm{i}\alpha \cdot (x_\ell - y_\ell)} e^{\mathrm{i}\sqrt{k_m^2-\alpha^2} |x_d - y_d|}}{2\mathrm{i}\sqrt{k_m^2-\alpha^2} |Y|}
            -\frac{e^{\mathrm{i}\alpha \cdot (x_\ell - y_\ell)} e^{\mathrm{i}\sqrt{k_m^2-\alpha^2} |x_d + y_d|}}{2\mathrm{i}\sqrt{k_m^2-\alpha^2} |Y|}\right) \varphi(y)\, \mathrm{d}\sigma(y) \\
      &= -\left(\int_{\partial D} \frac{\sin(\omega \tau_m y_d)}{\omega \tau_m |Y|}e^{-\mathrm{i}\alpha \cdot y_\ell}\varphi(y)\, \mathrm{d}\sigma(y)\right)e^{\mathrm{i}k_m \hat{\theta}_{+} \cdot x}.
\end{align*}
The total propagating wave is the sum of the reflected and scattered contributions
\begin{align*}
    u^p(x)
      = u^{r}(x)
        + u^{s,p}(x)
      = r(\omega)\,
        e^{\mathrm{i}k_m  \hat{\theta}_{+} \cdot x},
\end{align*}
where the reflection coefficient $r(\omega)$ is expressed as
\begin{align}\label{eq: formula of reflection coefficient}
    r(\omega)
      = -1- \int_{\partial D}\frac{\sin(\omega \tau_m y_d)}{\omega \tau_m |Y|}e^{-\mathrm{i}\alpha \cdot y_\ell}\varphi(y)\, \mathrm{d}\sigma(y).
\end{align}
From~\eqref{eq: solution of densities} and the asymptotic forms of  
$\mathcal{H}(\omega, \delta)$ and $\mathcal{F}(\omega)$ in~\eqref{eq: asymptotic of H}  
and~\eqref{eq: asymptotic of F(omega)}, the density $\phi$ satisfies
\begin{align*}
    \phi
      = \mathcal{O}(\omega \delta)
        + \sum_{j=1}^N z_j\, \mathcal{H}(\omega, \delta)[\hat{\psi}_j]
      = \sum_{j=1}^N z_j\, \hat{\psi}_j(1 + \mathcal{O}(\omega))
        + \mathcal{O}(\omega \delta).
\end{align*}
Hence, by~\eqref{eq: solution of densities},
\begin{align*}
    \varphi
      = (\mathcal{S}^{\alpha, k_m}_{D, s})^{-1}(\mathcal{S}^{\alpha, k_b}_{D, s})[\phi]
         - (\mathcal{S}^{\alpha, k_m}_{D, s})^{-1}[\tilde{u}^{i}]
      = \phi(1 + \mathcal{O}(\omega))
         + 2\mathrm{i}\tau_m \omega
           (\mathcal{S}^{0, 0}_{D, s})^{-1}[x_d]
           (1 + \mathcal{O}(\omega)).
\end{align*}
Substituting the expression of $\phi$ yields
\begin{align}\label{eq: asymptotic of varphi}
    \varphi
      = \sum_{j=1}^N z_j \hat{\psi}_j(1 + \mathcal{O}(\omega))
        + \mathcal{O}(\omega).
\end{align}
Substituting~\eqref{eq: asymptotic of varphi} into~\eqref{eq: formula of reflection coefficient} yields
\begin{align*}
    r(\omega)
      &= -1
         - \frac{1}{|Y|}
           \int_{\partial D} y_d\, \varphi(y)\, \mathrm{d}\sigma(y)
           (1 + \mathcal{O}(\omega))
         + \mathcal{O}(\omega)
       = -1
         - \frac{\hat{m}^{\top}z}{|Y|} (1 + \mathcal{O}(\omega))
         + \mathcal{O}(\omega).
\end{align*}
Using the asymptotic expansion of $z$ from~\cref{prop: asymptotic of solution z}, we obtain
\begin{align*}
    r(\omega)
      &= -1
         - \frac{2\mathrm{i}\omega \tau_m}{|Y|}
           \sum_{j=1}^N
           \frac{(m^{\top}u_j)^2}
                {\lambda_j(\omega)-\lambda(\omega)}
           (1 + \mathcal{O}(\omega))
         + \mathcal{O}(\omega).
\end{align*}
Using the relation for $\lambda_j(\omega)$ in~\eqref{eq: lambdaj1 omega}, this reduces to
\begin{align*}
    r(\omega)
      = -1
        - \sum_{j=1}^N
          \frac{2\mathrm{i}\omega \lambda_{j,1}}
               {\lambda_j - \mathrm{i}\omega \lambda_{j,1}-\lambda(\omega)}
          (1 + \mathcal{O}(\omega))
        + \mathcal{O}(\omega).
\end{align*}

Finally, consider the evanescent part $u^e(x) = u^{s,e}(x)$. From the definition of $G^{\alpha, k}_{s}(x,y)$, we have
\begin{align*}
    u^{e}(x)
      = - \sum_{\eta \in \Lambda^* \setminus \{0\}}
          \int_{\partial D}
            \frac{
            e^{\mathrm{i} (\alpha + \eta) \cdot (x_\ell-y_{\ell})}
            (e^{-\sqrt{|\alpha + \eta|^2 - k_m^2}\, (x_d-y_d)}-e^{-\sqrt{|\alpha + \eta|^2 - k_m^2}\, (x_d+y_d)})}
            {2|Y| \sqrt{|\alpha + \eta|^2 - k_m^2}}
            \varphi(y)\, \mathrm{d}\sigma(y).
\end{align*}
Thus,
\begin{align*}
    |u^e(x)|
      &\le e^{-c(x_d - h)}
        \int_{\partial D}
        \sum_{\eta \in \Lambda^* \setminus \{0\}}
          \frac{
            e^{-\sqrt{|\alpha + \eta|^2 - k_m^2}(h - y_d)}
            + e^{-\sqrt{|\alpha + \eta|^2 - k_m^2}(h + y_d)}}
               {2|Y| \sqrt{|\alpha + \eta|^2 - k_m^2}}
          |\varphi(y)|\, \mathrm{d}\sigma(y).
\end{align*}
By the Cauchy–Schwarz inequality, we finally deduce that
\begin{align*}
    |u^e(x)|
      \le C\, e^{-c(x_d - h)}\, \|\varphi\|_{L^2(\partial D)},
\end{align*}
where $C$ is a constant depending on the geometry of $D$ and the parameters $\alpha$, $k$, and $h$.
\end{proof}

As a direct consequence of \cref{thm: approximation of total field}, we derive the following approximate scattering problem with impedance boundary condition.
\begin{proposition} \label{prop:impedance}
Let $u_{\mathrm{app}}(x) = u^{i}(x) + u^{p}(x)$ denote the approximate total field, since the evanescent part $u^{e}(x)$ decays exponentially as $x_d \to +\infty$.  
Then $u_{\mathrm{app}}$ satisfies the following Helmholtz equation in the upper half space with impedance boundary condition:
\begin{equation}
\begin{cases}
\begin{aligned}
    &\Delta u_{\mathrm{app}} + k_m^2 u_{\mathrm{app}} = 0 \quad
    \text{in } \mathbb{R}^d_+, \\[0.25em]
    &u_{\mathrm{app}} + \gamma(\omega)\, 
      \dfrac{\partial u_{\mathrm{app}}}{\partial x_d} = 0 \quad
     \text{on } \partial \mathbb{R}^d_+, \\[0.25em]
    &u_{\mathrm{app}} - u^{i} \text{ satisfies the outgoing radiation condition as } x_d \to +\infty.
\end{aligned}
\end{cases}
\end{equation}
Here, the impedance parameter $\gamma(\omega)$ is given by
\begin{align}
    \gamma(\omega)
      = \frac{1}{\mathrm{i}\omega \tau_m }\frac{1+r(\omega)}{1-r(\omega)},
\end{align}
where $r(\omega)$ denotes the reflection coefficient defined in~\cref{thm: approximation of total field} and is approximately given by \eqref{eq: formula of reflection coefficient}.

\begin{remark}\label{rem:absorbing boundary condition}

If the material parameters $\kappa_b$ and $\rho_b$ are real, then at the resonant frequencies $\omega_j(\delta)$ given in Theorem~\ref{thm: asymptotic of resonance frequency}, or equivalently when $\lambda(\omega)$ approaches $\lambda_j$ in \cref{thm: approximation of total field}, the reflection coefficient satisfies $r(\omega_j) \approx 1$. Consequently, the periodic system of resonators on a reflective surface (with Dirichlet boundary condition) behaves effectively as a surface satisfying a Neumann boundary condition. Indeed, when $r(\omega)\to 1$, we have $\gamma(\omega)\to \infty$, and in this limit the approximate field $u_{\mathrm{app}}$ satisfies the sound-hard (Neumann) boundary condition.

If the material parameters $\kappa_b$ and $\rho_b$ are complex, the situation changes. In particular, when $\Re \lambda(\omega^{*}) \approx \lambda_j$ and $\Im \lambda(\omega^{*}) \approx \omega^{*} \lambda_{j,1}$ for some frequency $\omega^{*}$, the reflection coefficient approaches zero according to \eqref{eq: asymptotic of reflection coefficient} in \cref{thm: approximation of total field}. This corresponds to a regime of superabsorption \cite{regime}. For example, let $v^2_b=\sigma-\mathrm{i}\zeta$, where $\sigma>0$ and $\zeta \ge 0$. Then we decompose
\begin{align*}
    \lambda_j- \mathrm{i}\omega \lambda_{j,1} - \lambda(\omega)
    = \lambda_j-\frac{\sigma}{\sigma^2+\zeta^2}\frac{\omega^2}{\delta}
    -\mathrm{i}\left(\omega\lambda_{j,1}+\frac{\zeta}{\sigma^2+\zeta^2}\frac{\omega^2}{\delta}\right).
\end{align*}
By appropriately choosing the parameters $\sigma$ and $\zeta$, one can enforce $\Re \lambda(\omega^{*}) \approx \lambda_j$ and $\Im \lambda(\omega^{*}) \approx \omega^{*} \lambda_{j,1}$, thereby achieving the desired condition. This provides a rigorous explanation of the superabsorption phenomenon observed in \cite{lanoy2018broadband,leroy2015superabsorption}. Moreover, when $r(\omega)\to 0$, we have $\gamma(\omega)=1/(\mathrm{i}\omega \tau_m)$, and the approximate field $u_{\mathrm{app}}$ satisfies the absorbing boundary condition
\begin{align*}
    \frac{\partial u_{\mathrm{app}}}{\partial x_d}+ \mathrm{i}\omega \tau_m u_{\mathrm{app}}= 0.
\end{align*}

\begin{comment}
If the reflection coefficient $r(\omega)$ approaches $1$, then $\gamma(\omega)\to \infty$. In this limit, $u_{\mathrm{app}}$ satisfies the sound-hard (Neumann) boundary condition. If $r(\omega)$ approaches $0$, then
$\gamma(\omega)=1/(\mathrm{i}\omega \tau_m )$. In this case, $u_{\mathrm{app}}$ satisfies the absorbing boundary condition
\begin{align*}
    \frac{\partial u_{\mathrm{app}}}{\partial x_d}+ \mathrm{i}\omega \tau_m u_{\mathrm{app}}= 0.
\end{align*}
\end{comment}

\end{remark}

\begin{comment}
    \begin{equation}
\label{app:r}
 r(\omega)
      = -1 - \sum_{j=1}^N
          \frac{2\mathrm{i}\omega \lambda_{j,1}}
               {\lambda - \lambda_j - \mathrm{i}\omega \lambda_{j,1}}.
\end{equation}
\end{comment}

\end{proposition}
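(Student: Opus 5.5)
The plan is to verify the boundary value problem directly from the explicit form of $u_{\mathrm{app}}$ supplied by \Cref{thm: approximation of total field}. There we have $u_{\mathrm{app}}(x)=e^{\mathrm{i}k_m\hat{\theta}_-\cdot x}+r(\omega)e^{\mathrm{i}k_m\hat{\theta}_+\cdot x}$. Both exponentials are plane waves with wave vectors of modulus $k_m$, since $|\hat{\theta}_\pm|=1$. Hence the Helmholtz equation $\Delta u_{\mathrm{app}}+k_m^2u_{\mathrm{app}}=0$ holds in all of $\mathbb{R}^d_+$, and indeed in all of $\mathbb{R}^d$, with no further work. The scattered part $u_{\mathrm{app}}-u^i=r(\omega)e^{\mathrm{i}k_m\hat{\theta}_+\cdot x}$ is a single upward propagating Rayleigh--Bloch mode. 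It carries the quasi-momentum $\alpha=k_m\hat{\theta}_\ell$ and the vertical wavenumber $k_m\hat{\theta}_d=\omega\tau_m>0$. This is exactly the form the outgoing radiation condition allows, so the radiation condition holds.

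The only substantive step is the boundary condition on $x_d=0$. First I compute the trace $u_{\mathrm{app}}|_{x_d=0}=(1+r)e^{\mathrm{i}\alpha\cdot x_\ell}$. Differentiating each exponential in $x_d$ gives the factors $-\mathrm{i}k_m\hat{\theta}_d$ and $+\mathrm{i}k_m\hat{\theta}_d$. This yields
\begin{align*}
\frac{\partial u_{\mathrm{app}}}{\partial x_d}\Big|_{x_d=0}=\mathrm{i}\omega\tau_m\,(r-1)\,e^{\mathrm{i}\alpha\cdot x_\ell},
\end{align*}
using $k_m\hat{\theta}_d=\omega\tau_m$ as introduced in Section~\ref{sec:approximation}.

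The condition $u_{\mathrm{app}}+\gamma\,\partial_{x_d}u_{\mathrm{app}}=0$ therefore reduces to the scalar identity $(1+r)+\gamma\,\mathrm{i}\omega\tau_m(r-1)=0$. Solving for $\gamma$ gives $\gamma=(1+r)/\bigl(\mathrm{i}\omega\tau_m(1-r)\bigr)$, which is the stated formula. The condition holds pointwise on $\partial\mathbb{R}^d_+$ because the common factor $e^{\mathrm{i}\alpha\cdot x_\ell}$ cancels.

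I do not expect a real obstacle; the only care needed is a degenerate case. The formula requires $r(\omega)\neq1$, and $\omega>0$ holds by assumption. In the limiting case $r\to1$ the natural interpretation is $\gamma=\infty$, meaning a Neumann condition. This matches \Cref{rem:absorbing boundary condition}, and I would state it separately. Finally, I would note that the asymptotic expression \eqref{eq: asymptotic of reflection coefficient}, or the exact formula \eqref{eq: formula of reflection coefficient}, can be substituted for $r$ to obtain the approximate impedance. The identity itself is exact for the two-wave field $u_{\mathrm{app}}$, so no error terms enter the verification.
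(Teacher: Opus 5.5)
Your proposal is correct and is essentially the paper's own argument. The paper states the proposition as a direct consequence of Theorem~\ref{thm: approximation of total field}, and its implicit content is exactly your check: $e^{\mathrm{i}k_m\hat{\theta}_-\cdot x}+r(\omega)e^{\mathrm{i}k_m\hat{\theta}_+\cdot x}$ solves the Helmholtz equation with an outgoing reflected mode, and the trace and normal derivative at $x_d=0$ force $(1+r)+\gamma\,\mathrm{i}\omega\tau_m(r-1)=0$, which gives the stated $\gamma(\omega)$.

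Your proposal does not actually verify the two limiting claims in the remark; they follow by substituting into \eqref{eq: asymptotic of reflection coefficient}. For $\lambda(\omega)=\lambda_j$ the $j$th fraction in the sum equals $-2$, so $r=1+\mathcal{O}(\omega)$, which is the Neumann limit. For $\lambda(\omega^*)=\lambda_j+\mathrm{i}\omega^*\lambda_{j,1}$ that fraction equals $-1$, so $r=\mathcal{O}(\omega)$ and $\gamma\approx 1/(\mathrm{i}\omega\tau_m)$.
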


\section{Reduced order model for broadband absorption} \label{sec:broabband}

In this section, we introduce two reduced order models for broadband absorption on a prescribed frequency interval $[\omega_{\min},\omega_{\max}]$. Both are based on the analytical approximation of the reflection coefficient $r(\omega)$ derived in \Cref{thm: approximation of total field}. In particular, \eqref{eq: asymptotic of reflection coefficient} shows that the evaluation of $r(\omega)$ depends primarily on the modal quantities $\lambda_j$ and $\lambda_{j,1}$. Since these quantities are frequency independent, they can be computed once for a given geometry and then reused to efficiently evaluate $r(\omega)$ over the target band.

We first consider a global in frequency criterion that directly penalizes reflection over $[\omega_{\min},\omega_{\max}]$ through the band averaged reflectance
\begin{equation}\label{def:objective_simple}
J^{\mathrm{ref}}
=\frac{1}{\omega_{\max}-\omega_{\min}}
\int_{\omega_{\min}}^{\omega_{\max}} | r(\omega)|^2 \, d\omega.
\end{equation}
This functional represents the mean reflected energy over the target band. Minimizing $J^{\mathrm{ref}}$ promotes broadband absorption in an integrated sense, rather than enforcing low reflectance only at isolated frequencies.

A complementary viewpoint is provided by the resonance mechanism. Broadband absorption can be enhanced when several resonance induced low reflection features are distributed across the band and overlap. This suggests that the real parts of the resonant frequencies should be well spread within $[\omega_{\min},\omega_{\max}]$, and that the reflectance should be small near these resonances. Motivated by the superabsorption mechanism discussed in \Cref{rem:absorbing boundary condition}, we introduce a second objective that enforces the matching condition at a prescribed set of target frequencies $\{\omega_j^{*}\}_{j=1}^M \subset [\omega_{\min},\omega_{\max}]$:
\begin{equation}\label{def:objective_res}
J^{\mathrm{res}}
=\frac{1}{M}\sum_{j=1}^M\left[
\left(\frac{\lambda_j}{\Re\lambda(\omega^{*}_j)}-1\right)^2
+
\left(\frac{\omega_j^{*}\lambda_{j,1}}{\Im\lambda(\omega^{*}_j)}-1\right)^2
\right].
\end{equation}
For broadband performance, a natural choice is to distribute the targets throughout $[\omega_{\min},\omega_{\max}]$, for instance, uniformly as
\begin{align}
\omega_j^{*}
=\omega_{\min}+\frac{j}{M+1}\,(\omega_{\max}-\omega_{\min}),
\quad j=1,\dots,M.
\end{align}

Overall, $J^{\mathrm{ref}}$ and $J^{\mathrm{res}}$ capture broadband performance from complementary perspectives. The functional $J^{\mathrm{ref}}$ is a global measure that reduces reflection on average across the band, whereas $J^{\mathrm{res}}$ is a resonance driven, local in frequency criterion that promotes near zero reflectance at selected frequencies and enhances overlapping superabsorption features.

Finally, we note that both $J^{\mathrm{ref}}$ and $J^{\mathrm{res}}$ depend on the resonant quantities $\lambda_j$ and $\lambda_{j,1}$. Therefore, to perform gradient-based shape optimization for either objective, we require the corresponding shape derivatives of $\lambda_j$ and $\lambda_{j,1}$. We then turn to the derivation of these shape derivatives.

\subsection{Shape sensitivity analysis}

We begin by recalling the definition of transformations generated by the velocity method (see \cite{delfour2011shapes,sokolowski1992introduction,haslinger2003introduction}
for further details).  

\begin{definition}
Let $D \subset \mathbb{R}^d$ be a $C^2$ domain, and let $u : \partial D \to \mathbb{R}$ be a $C^1$ function. The tangential gradient of $u$ is defined by
\begin{align}\label{eq: def of tangential gradient}
    \nabla_T u = \nabla \tilde{u} - \frac{\partial \tilde{u}}{\partial \nu}\nu,
\end{align}
where $\tilde{u}$ is any $C^1$ extension of $u$ to a neighborhood of $\partial D$. The definition is independent of the particular choice of the extension~$\tilde{u}$. Let $F : \partial D \rightarrow \mathbb{R}^d$ be a vector field.   Its tangential component is given by
\begin{align*}
    F_T = F - (F \cdot \nu)\nu = F - F_{\nu}\nu,
\end{align*}
and its tangential divergence is defined as
\begin{align*}
    \operatorname{div}_T F = \operatorname{div} \tilde{F} - (\nabla \tilde{F}\,\nu)\cdot \nu,
\end{align*}
where $\tilde{F}$ is any $C^1$ extension of $F$ to a neighborhood of $\partial D$.
\end{definition}

Next, let $T_t: \mathbb{R}^d \rightarrow \mathbb{R}^d$, $t \in [0, t_0)$, be a family of transformations describing the deformation of a reference domain $D \subset \mathbb{R}^d$. The deformed domain is given by
\begin{align*}
    D_t := T_t(D) = \{T_t(x) : x \in D\},
\end{align*}
and its boundary satisfies
\begin{align*}
    \partial D_t := T_t(\partial D) = \{T_t(x) : x \in \partial D\}.
\end{align*} 
Throughout this section, we consider small perturbations of the identity mapping of the form
\begin{align*}
    x_t := T_t(x) = x + t\,\velocity(x),
\end{align*}
where $\velocity$ is a smooth vector field. Consequently, $x_t'=\velocity(x_t)$ with initial condition $x_0=x$, where $x_t'$ denotes the derivative with respect to $t$. The volume Jacobian is defined by $\gamma_t=\det(\nabla T_t)$, so that $\gamma_0=1$, where $\nabla T_t$ is the Jacobian matrix with entries $(\nabla T_t)_{ij}=\partial_j T_{t,i}$. The associated surface Jacobian is $s_t:=\gamma_t\,\lvert(\nabla T_t)^{-\top}\nu\rvert$, hence $s_0=1$, where $\nu$ denotes the outward unit normal vector on $\partial D$.

The following results and definitions in shape sensitivity analysis hold (cf. \cite{delfour2011shapes,sokolowski1992introduction,haslinger2003introduction}).
\begin{lemma}\label{lem: derivative of s}
Let $\velocity$ be a smooth vector field, then the derivative of Jacobian determinant and surface Jacobian determinant are given by
\begin{align*}
    \gamma^{\prime}= \lim_{t \rightarrow 0} \frac{\gamma_t - \gamma_0}{t} = \operatorname{div}\velocity,\quad
    s^\prime = \lim_{t \rightarrow 0} \frac{s_t - s_0}{t} 
    = \operatorname{div}\velocity - (\nabla \velocity\, \nu)\cdot \nu
    = \operatorname{div}_T \velocity.
\end{align*}
\end{lemma}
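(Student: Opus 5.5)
We compute both derivatives directly from the definitions $\gamma_t=\det(\nabla T_t)$ and $s_t=\gamma_t\,\lvert(\nabla T_t)^{-\top}\nu\rvert$, with $\nabla T_t = I + t\nabla\velocity$. For the volume Jacobian we expand the determinant to first order in $t$:
\begin{align*}
\det(I+t\nabla\velocity)=1+t\,\operatorname{tr}(\nabla\velocity)+\mathcal{O}(t^2).
\end{align*}
This expansion holds either by Jacobi's formula $\frac{d}{dt}\det M_t=\det M_t\,\operatorname{tr}(M_t^{-1}M_t')$ evaluated at $t=0$, or by expanding the Leibniz sum, in which only the diagonal product contributes at order $t$. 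Since $\operatorname{tr}(\nabla\velocity)=\operatorname{div}\velocity$, we obtain $\gamma'=\operatorname{div}\velocity$.

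For the surface Jacobian we apply the product rule:
\begin{align*}
s'=\gamma'\,\lvert\nu\rvert+\gamma_0\,\frac{d}{dt}\Big|_{t=0}\lvert(\nabla T_t)^{-\top}\nu\rvert.
\end{align*}
Because $\nabla T_t$ is smooth in $t$ and equals $I$ at $t=0$, the Neumann series gives
\begin{align*}
(\nabla T_t)^{-\top}=I-t(\nabla\velocity)^{\top}+\mathcal{O}(t^2).
\end{align*}
Hence $w_t:=(\nabla T_t)^{-\top}\nu=\nu-t(\nabla\velocity)^{\top}\nu+\mathcal{O}(t^2)$. Since $\lvert\nu\rvert=1$ and $w_0=\nu$, we have
\begin{align*}
\frac{d}{dt}\Big|_{t=0}\lvert w_t\rvert=\frac{w_0\cdot w_0'}{\lvert w_0\rvert}=-\nu\cdot(\nabla\velocity)^{\top}\nu=-(\nabla\velocity\,\nu)\cdot\nu,
\end{align*}
where the last step uses the identity $a\cdot M^{\top}b=Ma\cdot b$. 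Combining this with $\gamma_0=1$ and $\gamma'=\operatorname{div}\velocity$ yields
\begin{align*}
s'=\operatorname{div}\velocity-(\nabla\velocity\,\nu)\cdot\nu.
\end{align*}
By the definition of the tangential divergence, with $\tilde F=\velocity$ itself serving as the $C^1$ extension, this is exactly $\operatorname{div}_T\velocity$.

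The only point needing care is to keep the matrix conventions consistent: $(\nabla T_t)_{ij}=\partial_j T_{t,i}$, so $(\nabla\velocity\,\nu)\cdot\nu=\sum_{i,j}\nu_i\,\partial_j\velocity_i\,\nu_j$. Transposing this quadratic form does not change it, so the result does not depend on whether the transpose falls on the inverse or on $\nabla\velocity$. The differentiability of $t\mapsto\lvert w_t\rvert$ at $t=0$ follows because $w_0=\nu\neq0$.
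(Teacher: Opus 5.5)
Your computation is correct: the first-order expansion of $\det(I+t\nabla\velocity)$ gives $\operatorname{div}\velocity$, and differentiating $\lvert(\nabla T_t)^{-\top}\nu\rvert$ via the Neumann series gives $\operatorname{div}\velocity-(\nabla\velocity\,\nu)\cdot\nu=\operatorname{div}_T\velocity$. The paper does not prove this lemma itself and only cites standard shape-calculus references, where this same direct calculation is the argument, so your proposal matches the intended proof.
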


\begin{definition}
Let $D_t \mapsto u_t \in H^1(D_t)$ be a function defined on the deforming domains $D_t$.  
We say that $u_t$ admits a \emph{material (Lagrangian) derivative} $\dot{u}_t \in H^1(D)$ at $D$ if the transported function is well defined on $D$.  
The material derivative of $u$ in the direction $\velocity$ is given by
\begin{align}
    \dot{u} = \dot{u}(D; \velocity) := \lim_{t \rightarrow 0} \frac{u_t(x_t) - u(x)}{t}.
\end{align}
The corresponding \emph{shape derivative} (Eulerian derivative) of $u$ in the direction $\velocity$ is
\begin{align}
    u' = \dot{u} - \nabla u \cdot \velocity.
\end{align}
If $u_t := u(\partial D_t)$ is defined on $\partial D_t$, the material and shape derivatives, denoted by $\dot{u}$ and $u'$, satisfy
\begin{align}
    u' = \dot{u} - \nabla_T u \cdot \velocity.
\end{align}
\end{definition}

\begin{definition}
For any vector field $\velocity \in C([0, t_0]; \mathbb{R}^d)$, the Eulerian derivative of a shape functional $J(D)$ at $D$ in the direction $\velocity$ is defined by
\begin{align*}
    J^{\prime}(D; \velocity) = \lim_{t \rightarrow 0} \frac{J(D_t) - J(D)}{t}.
\end{align*}
\end{definition}

Before deriving the shape derivatives of the eigenvalues and eigenvectors, we first state several auxiliary results. The proof of \Cref{lem: shape derivative for general framework} is given in \Cref{appdx: proof of shape derivative for general form}.

\begin{lemma}\label{lem: shape derivative for general framework}
    We assume that, when restricted to the exterior domain $Y_{\infty}\setminus\overline{D}$, the functions $u$ and $v$ solve the exterior periodic Laplace problem \eqref{eq: PDE solution of capacity form} and satisfy the boundary conditions $u=f$ and $v=g$ on $\partial D$. Likewise, when restricted to the interior domain $D$, the functions $u$ and $v$ solve the interior Laplace problem and satisfy the same boundary conditions $u=f$ and $v=g$ on $\partial D$. Let $J_1$ and $J_2$ be two shape functionals defined in the domains $Y_{\infty} \setminus \overline{D}$ and $D$, respectively, and given by
    \begin{align}\label{eq: def of J1 and J2}
        J_1(D) := \int_{Y_{\infty} \setminus \overline{D}} 
            \nabla u \cdot \nabla v\, dx, \qquad
        J_2(D) := \int_{D}
            \nabla u \cdot \nabla v\, dx.
    \end{align}
     The shape derivative of $J_1$ and $J_2$ are given by
    \begin{align}
        J'_1(D;\theta) =\; & \int_{\partial D}
        \left(
            \left(\frac{\partial u}{\partial \nu}\frac{\partial v}{\partial \nu}\right)\Big|_{+}
            -\left(\frac{\partial u}{\partial \nu}\frac{\partial g}{\partial \nu}+\frac{\partial v}{\partial \nu}\frac{\partial f}{\partial \nu}\right)\Big|_{+} -\left(\nabla_T u \cdot \nabla_T v\right)
        \right)(\theta \cdot \nu)\, d\sigma, \label{eq: shape derivative for J1} \\
        J'_2(D;\theta) =\; & \int_{\partial D}
        \left(
            \left(\nabla_T u \cdot \nabla_T v\right)+\left(\frac{\partial u}{\partial \nu}\frac{\partial g}{\partial \nu}+\frac{\partial v}{\partial \nu}\frac{\partial f}{\partial \nu}\right)\Big|_{-}
            - \left(\frac{\partial u}{\partial \nu}\frac{\partial v}{\partial \nu}\right)\Big|_{-}
        \right)(\theta \cdot \nu)\, d\sigma. \label{eq: shape derivative for J2}
    \end{align}
    Furthermore, if $u$ and $v$ are defined in $Y_{\infty} \setminus \overline{D}$ and $D$, we have
    \begin{align}\label{eq: shape derivative for J12}
        J'_1(D;\theta)+J'_2(D;\theta) = \int_{\partial D}
        \left(\left[\frac{\partial u}{\partial \nu}\frac{\partial v}{\partial \nu}\right]
            -\left[\frac{\partial u}{\partial \nu}\frac{\partial g}{\partial \nu}+\frac{\partial v}{\partial \nu}\frac{\partial f}{\partial \nu}\right]
        \right)(\theta \cdot \nu)\, d\sigma,
    \end{align}
    where, for any quantity $w$ admitting interior and exterior traces on $\partial D$, we denote its jump by $[w]:=w|_{+}-w|_{-}$.
\end{lemma}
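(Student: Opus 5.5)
The proof goes through the material derivative and Hadamard's formula for the derivative of a domain integral, followed by a transport/integration-by-parts argument for the boundary data. We treat $J_1$ first; $J_2$ is identical with the normal reversed; then we add.

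\emph{Step 1: domain integral with moving boundary.} Write $\Omega_t := Y_\infty\setminus\overline{D_t}$. Since $\theta$ can be taken compactly supported near $\partial D$, away from $\partial\mathbb{R}^d_+$, from the lateral cell boundary and from infinity, the Dirichlet, quasi-periodic and decay conditions are unaffected by the deformation. Reynolds' transport theorem gives
\[
J_1'(D;\theta)=\int_{\Omega}\big(\nabla u'\cdot\nabla v+\nabla u\cdot\nabla v'\big)\,dx-\int_{\partial D}(\nabla u\cdot\nabla v)\big|_+\,(\theta\cdot\nu)\,d\sigma .
\]
The minus sign appears because $\nu$ points into $\Omega$, the exterior domain. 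Here $u',v'$ are the Eulerian shape derivatives. They are harmonic in $\Omega$, quasi-periodic, vanish on $\partial\mathbb{R}^d_+$, and decay exponentially to a constant at infinity.

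\emph{Step 2: boundary conditions of $u'$, $v'$.} The boundary data $f,g$ are given functions that do not move with the domain. Differentiating $u_t=f$ on $\partial D_t$ therefore gives $\dot u=\nabla f\cdot\theta$. Hence
\[
u'=\dot u-\nabla u\cdot\theta=\Big(\frac{\partial f}{\partial\nu}-\frac{\partial u}{\partial\nu}\Big|_+\Big)(\theta\cdot\nu)\quad\text{on } \partial D,
\]
because the tangential gradients of $u$ and $f$ agree on $\partial D$. Similarly, $v'=\big(\partial_\nu g-\partial_\nu v|_+\big)(\theta\cdot\nu)$.

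\emph{Step 3: Green's formula.} Since $v$ is harmonic, integration by parts gives
\[
\int_\Omega\nabla u'\cdot\nabla v\,dx=-\int_{\partial D}u'\,\partial_\nu v|_+\,d\sigma .
\]
The lateral cell faces cancel by quasi-periodicity. The face $x_d=0$ contributes nothing because $u'=0$ there. The contribution at infinity vanishes because $\partial_{x_d}v$ decays exponentially. This last point needs care: the limit $v_\infty$ of $v$ is nonzero, but only the normal derivative enters, so the argument goes through. Handling these boundary contributions is the main technical obstacle. One must also justify the regularity of $u'$ in $H^1_{\mathrm{loc}}$ via the material derivative, using the $C^2$ boundary. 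The other volume term is treated the same way.

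\emph{Step 4: combine.} Substituting the boundary conditions from Step 2 gives
\[
-\big(\partial_\nu f\,\partial_\nu v+\partial_\nu g\,\partial_\nu u\big)+2\,\partial_\nu u\,\partial_\nu v .
\]
Split the gradient as $\nabla u\cdot\nabla v=\partial_\nu u\,\partial_\nu v+\nabla_Tu\cdot\nabla_Tv$. The result is
\[
\partial_\nu u\,\partial_\nu v-\big(\partial_\nu u\,\partial_\nu g+\partial_\nu v\,\partial_\nu f\big)-\nabla_Tu\cdot\nabla_Tv ,
\]
all multiplied by $\theta\cdot\nu$. This is \eqref{eq: shape derivative for J1}.

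For $J_2$ the outward normal of $D$ is $\nu$ itself, so every sign flips. This yields \eqref{eq: shape derivative for J2}, with interior traces throughout.

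Adding the two formulas, the tangential terms cancel, since the tangential gradients of $u$ and $v$ are determined by $f$ and $g$ and so coincide from both sides. The remaining terms are exactly the jumps in \eqref{eq: shape derivative for J12}.
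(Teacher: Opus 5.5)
Your proof is correct, but it takes the Eulerian route where the paper takes the Lagrangian one.

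\textbf{The paper's route.} It pulls $J_1(D_t)$ back to the fixed domain $Y_\infty\setminus\overline{D}$ and differentiates the transported metric $\gamma_t DT_t^{-1}DT_t^{-\top}$. This gives the distributed term $\int[(\operatorname{div}\theta-D\theta-D\theta^{\top})\nabla u]\cdot\nabla v\,dx$. That term is turned into a boundary integral by three divergence identities involving the Hessians and $\Delta u=\Delta v=0$. The remaining terms are handled with the material derivatives $\dot u,\dot v$, whose Dirichlet data $\nabla f\cdot\theta$ and $\nabla g\cdot\theta$ are immediate.

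\textbf{Your route.} Reynolds' transport theorem produces the term $-(\nabla u\cdot\nabla v)(\theta\cdot\nu)$ directly. You then work with the Eulerian derivatives $u'=\dot u-\nabla u\cdot\theta$. Their Dirichlet data $(\partial_\nu f-\partial_\nu u)(\theta\cdot\nu)$ already show the Hadamard structure, so a single Green identity finishes the argument.

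\textbf{Where they meet.} Both computations arrive at the same intermediate identity
\[
J_1'(D;\theta)=-\int_{\partial D}\big(u'\,\partial_\nu v+v'\,\partial_\nu u+(\nabla u\cdot\nabla v)(\theta\cdot\nu)\big)\big|_{+}\,d\sigma .
\]
In the paper, term $\mathrm{I}$ supplies exactly the $(\nabla u\cdot\theta)\,\partial_\nu v$ and $(\nabla v\cdot\theta)\,\partial_\nu u$ pieces that turn $\dot u,\dot v$ into $u',v'$.

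\textbf{Trade-offs.} Your version is shorter and avoids the metric derivative and the Hessian identities. The paper's version works only with material derivatives on a fixed domain, whose $H^1$ existence is the standard result. It also passes through a volume form $\mathrm{I}+\mathrm{II}+\mathrm{III}$ that is valid under weaker regularity. Your version instead has to justify that $u'$ exists with the stated trace and that $(\nabla u)'=\nabla u'$. This needs $H^2$ regularity of $u,v$ up to $\partial D$, which the $C^2$ boundary provides, as you note.

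\textbf{A small wording point.} The lateral cell faces cancel because the functions in \eqref{eq: PDE solution of capacity form} are periodic. For genuinely $\alpha$-quasi-periodic $u,v$ with $\alpha\neq 0$, the unconjugated product $u'\,\partial_n v$ would not cancel across opposite faces.
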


We now derive the shape derivatives of the eigenvalues and eigenvectors based on the variational characterization of the capacitance matrix in \eqref{eq: def of capacity matrix by PDE}. The proof of \Cref{thm: shape derivative of eigenvalue} is given in \Cref{appdx: shape derivative of eigenvalue}.

\begin{theorem}\label{thm: shape derivative of eigenvalue}
The shape derivatives of the capacitance matrix $C(D)$ in \eqref{eq: def of capacity matrix by PDE} and the volume matrix $V(D)$ admit the following representations:
\begin{align}
\label{eq: shape derivative of C in varaitional form}
C'(D;\theta)&=\int_{\partial D}g^C \, (\theta\cdot\nu)\, d\sigma,\quad g^C_{ij}=\left(\frac{\partial v_i}{\partial \nu}\frac{\partial v_j}{\partial \nu}\right)\Big{|}_{+},\\
\label{eq: shape derivative of V}
V'(D;\theta)&=\int_{\partial D}g^V \, (\theta\cdot\nu)\, d\sigma,\quad 
 g^V_{ij}=\chi_{\partial D_i}\chi_{\partial D_j},
\end{align}
where $g^C$ and $g^V$ are matrix valued functions defined on $\partial D$. Let $\lambda_j$ and $u_j$ depend on the shape of the domain $D$ and satisfy \eqref{eq: eigenvalue problem of C}. Then, their shape derivatives admit the boundary integral representations
\begin{equation}\label{eq: shape derivative of eigenpair}
    \lambda_j^\prime(D;\theta)= \int_{\partial D} g^{\lambda,0}_j (\theta\cdot\nu)\, d\sigma,
    \quad u_j^\prime(D;\theta)= \int_{\partial D} g^{u}_j (\theta\cdot\nu)\, d\sigma,
\end{equation}
where the scalar function $g^{\lambda,0}_j$ and the vector valued function $g^{u}_j$ are given by
\begin{align}
    g^{\lambda,0}_j=u_j^{\top}(g^C-\lambda_j g^V)u_j,\quad 
    g^{u}_j = \sum_{i\neq j}\frac{u_j^{\top}(g^C-\lambda_j g^V)u_i}{\lambda_j-\lambda_i} u_i-\frac{1}{2}(u_j^\top g^Vu_j)u_j.
\end{align}
Furthermore, the derivative of $\lambda_{j,1}(D)$, as defined in \eqref{eq: lambdaj1 omega}, is given by
\begin{align}\label{eq: shape derivative of lambdaj1}
\lambda_{j,1}^\prime(D;\theta) = \int_{\partial D}  g^{\lambda,1}_j (\theta\cdot\nu)\, d\sigma,\quad  g^{\lambda,1}_j=\frac{2\tau_m(m^{\top}u_j)}{|Y|}  \left( m^{\top}g^{u}_j+u_j^{\top} g^{m}\right).
\end{align}
Here, the shape derivative $m_j^{\prime}(D;\theta)$ is given by
\begin{align}\label{eq: shape derivative of m in varaitional form}
    m_j^{\prime}(D;\theta) = \int_{\partial D}
       g^{m}_j (\theta\cdot\nu)  \mathrm{d}\sigma,\quad g^{m}_j=
       \left[\frac{\partial w}{\partial \nu} \frac{\partial v_j}{\partial \nu}\right]-\nu_d\left[\frac{\partial v_j}{\partial \nu}\right],
\end{align}
where $w$ solves the exterior periodic Laplace problem \eqref{eq: PDE solution of capacity form} in $Y_{\infty}\setminus\overline{D}$ and the interior Laplace problem in $D$, with the Dirichlet boundary condition $w=x_d$ on $\partial D$ in both cases.
\end{theorem}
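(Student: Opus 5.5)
The proof splits into three parts: the derivatives of the matrices $C$ and $V$, the perturbation of the generalized eigenpairs, and the chain rule for $\lambda_{j,1}$ through $m$.

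\emph{Derivatives of $C$ and $V$.} For $C'$ we use the variational form \eqref{eq: def of capacity matrix by PDE}, $C_{ij}=J_1(D)$ with $u=v_i$ and $v=v_j$. We apply \eqref{eq: shape derivative for J1} of \Cref{lem: shape derivative for general framework} with boundary data $f=\chi_{\partial D_i}$ and $g=\chi_{\partial D_j}$. These data are locally constant, but the terms $\partial f/\partial\nu$ and $\partial g/\partial\nu$ must be read as normal derivatives of the transported Dirichlet data. The natural extension is the interior solution, which is constant in each $D_k$, so these terms vanish. Likewise $\nabla_T v_i=0$ on $\partial D$, so only $\bigl(\partial_\nu v_i\,\partial_\nu v_j\bigr)|_+$ survives, giving $g^C$. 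The sign convention of \eqref{eq: shape derivative for J1} has to be checked against the Hadamard formula for an exterior Dirichlet energy. For $V$ we use the classical identity $|D_i|'=\int_{\partial D_i}\theta\cdot\nu\,d\sigma$, which follows from \Cref{lem: derivative of s} via $\gamma'=\operatorname{div}\theta$ and the divergence theorem. This is exactly $g^V_{ij}=\chi_{\partial D_i}\chi_{\partial D_j}$, nonzero only on the diagonal.

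\emph{Eigenpair perturbation.} Under \Cref{asmp: eigenvalue}, $\lambda_j$ and $u_j$ are differentiable in $t$. We differentiate $Cu_j=\lambda_jVu_j$ to get
\[
C'u_j+Cu_j'=\lambda_j'Vu_j+\lambda_jV'u_j+\lambda_jVu_j'.
\]
Multiplying by $u_j^\top$, using the symmetry of $C$ and $V$ and $u_j^\top Vu_j=1$, gives $\lambda_j'=u_j^\top(C'-\lambda_jV')u_j$. Multiplying instead by $u_i^\top$ with $i\neq j$ gives the coefficient $u_i^\top Vu_j'=u_j^\top(C'-\lambda_jV')u_i/(\lambda_j-\lambda_i)$. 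Differentiating the normalization $u_j^\top Vu_j=1$ gives $u_j^\top Vu_j'=-\tfrac12u_j^\top V'u_j$. Expanding $u_j'$ in the $V$-orthonormal basis $\{u_i\}$ and inserting the boundary representations of $C'$ and $V'$ then yields $g^{\lambda,0}_j$ and $g^u_j$.

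\emph{Derivative of $\lambda_{j,1}$ and of $m$.} By \eqref{eq: lambdaj1 omega}, $\lambda_{j,1}=\tau_m(m^\top u_j)^2/|Y|$, so the chain rule gives $\lambda_{j,1}'=2\tau_m(m^\top u_j)(m'^\top u_j+m^\top u_j')/|Y|$, which is \eqref{eq: shape derivative of lambdaj1} once $m'$ is known. The main obstacle is $m_j'$, because $m_j$ is defined through the layer-potential density $\psi_j$. We first rewrite it variationally. Since $\psi_j=[\partial_\nu v_j]$ (up to sign, by the jump relation), we have $m_j=-\int_{\partial D}x_d[\partial_\nu v_j]\,d\sigma$. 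Green's identity on $D$ and on $Y_\infty\setminus\overline D$ with $w$ turns this into
\[
m_j=\int_{Y_\infty\setminus\overline D}\nabla w\cdot\nabla v_j\,dx+\int_D\nabla w\cdot\nabla v_j\,dx .
\]
Here the contribution of the Dirichlet plane vanishes because $v_j=0$ there, and the contribution at infinity vanishes because of the exponential decay in \eqref{eq: PDE solution of capacity form}. Then \eqref{eq: shape derivative for J12} applies with $u=w$, $f=x_d$, $v=v_j$, $g=\chi_{\partial D_j}$. The $\partial_\nu g$ term drops out as before. The term $\partial_\nu f=\nu_d$ (the extension $x_d$ is smooth across $\partial D$) contributes $-\nu_d[\partial_\nu v_j]$, and the remaining term is $[\partial_\nu w\,\partial_\nu v_j]$, which gives $g^m_j$. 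The delicate points to verify are the consistency of the jump signs with the definition $m_i=-\int x_d\psi_i$, and that $w$ is not constant in $D$, so the interior contribution genuinely remains.
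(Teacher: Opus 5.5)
Your proposal is correct and follows the paper's proof essentially step for step. It uses the Hadamard formula \eqref{eq: shape derivative for J1} with locally constant data for $C'$, the standard volume derivative for $V'$, and differentiation of $Cu_j=\lambda_jVu_j$ together with $u_j^\top Vu_j=1$ for the eigenpair; it then applies the chain rule for $\lambda_{j,1}$ and rewrites $m_j=-\int_{\partial D}w\,[\partial_\nu v_j]\,d\sigma=\int_{Y_\infty\setminus\overline D}\nabla w\cdot\nabla v_j\,dx+\int_D\nabla w\cdot\nabla v_j\,dx$ before applying \eqref{eq: shape derivative for J12}, exactly as the paper does.

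A small correction to your closing remark: the jump relation gives $[\partial_\nu v_j]=\psi_j$ exactly, not up to sign, and since $v_j$ is constant in each $D_k$, the interior integral $\int_D\nabla w\cdot\nabla v_j\,dx$ vanishes identically ($\partial_\nu v_j|_-=0$ kills every interior trace in $g^m_j$), so the interior contribution does not survive, though this does not affect the formula.
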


\begin{remark}
Let $\widetilde{\psi}$ be the solution of the boundary integral equation $\mathcal{S}^{0,0}_{D,s}[\widetilde{\psi}]=x_d$ on $\partial D$, and then $w$ is given by $w=\mathcal{S}^{0,0}_{D,s}[\widetilde{\psi}]$. Then the shape derivatives of $C_{ij}(D)$ and $m_j(D)$ admit boundary density representations obtained by applying the jump relations for $v_i$ and $w$:
\begin{align}\label{eq: shape derivative of C in BIE form}
C_{ij}^\prime(D;\theta) =& \int_{\partial D} \Big( \psi_i (\mathcal{K}^{0,0}_{D,s})^*[\psi_j] +\psi_j (\mathcal{K}^{0,0}_{D,s})^*[\psi_i] \Big)(\velocity \cdot \nu)\, d\sigma,\\
\label{eq: shape derivative of m in BIE form}
m_j^\prime(D;\theta) =&\int_{\partial D}
\Big( \psi_j (\mathcal{K}^{0,0}_{D,s})^*[\widetilde{\psi}] +\widetilde{\psi} (\mathcal{K}^{0,0}_{D,s})^*[\psi_j] -\nu_d\psi_j \Big) (\velocity \cdot \nu)\, d\sigma.
\end{align}
Here, we have used in the derivation of \eqref{eq: shape derivative of C in BIE form} the fact that $\left(-\frac{1}{2}\mathcal{I}+(\mathcal{K}^{0,0}_{D,s})^*\right)[\psi_i]=0$.
\end{remark}

\begin{remark}
    In the single particle case ($N=1$), the general shape derivative formula simplifies considerably. More precisely, the variational representation reduces to
    \begin{align*}
    \lambda^\prime(D;\theta)
    =
    \frac{1}{|D|}
    \int_{\partial D}
    \left(
    \left(\frac{\partial v}{\partial \nu}\Big{|}_{+}\right)^2
    -
    \frac{\mathrm{Cap}(D)}{|D|}
    \right)
    (\theta\cdot\nu)\, d\sigma.
    \end{align*}
    Equivalently, one obtains the boundary integral representation
    \begin{equation*}
    \lambda^\prime(D;\theta)
    =
    \frac{1}{|D|}
    \int_{\partial D}
    \left(
    2 \psi (\mathcal{K}^{0, 0}_{D, s})^*[\psi]
    -
    \frac{\mathrm{Cap}(D)}{|D|}
    \right)
    (\theta\cdot\nu)\, d\sigma.
    \end{equation*}
\end{remark}

\begin{proposition}
Let $r(\omega,D;\theta)$ denote the approximate reflection coefficient given in \eqref{eq: asymptotic of reflection coefficient}. Then, its shape derivative in the direction $\theta$ admits the boundary integral representation
\begin{align}
r^{\prime}(\omega,D;\theta)= \int_{\partial D} g^{r}(\omega)\,(\theta\cdot\nu)\, d\sigma,
\end{align}
where
\begin{align*}
g^{r}(\omega)
= - \sum_{j=1}^{N}\frac{2\mathrm{i}\omega \left((\lambda_{j}-\lambda(\omega))\,g^{\lambda,1}_j- \lambda_{j,1}\,g^{\lambda,0}_j\right)}{\bigl(\lambda_j - \mathrm{i}\omega \lambda_{j,1}-\lambda(\omega)\bigr)^2}\bigl(1 + \mathcal{O}(\omega)\bigr)
+ \mathcal{O}(\omega).
\end{align*}
Moreover, the shape derivatives of $J^{\mathrm{ref}}$ and $J^{\mathrm{res}}$ can be written in the form
\begin{equation}\label{eq: shape derivative of Jref}
(J^{\mathrm{ref}})^{\prime}(D;\theta)
= \int_{\partial D} g^{\mathrm{ref}}\,(\theta \cdot \nu)\, d\sigma,
\qquad
g^{\mathrm{ref}}
=\frac{2}{\omega_{\max}-\omega_{\min}}
\int_{\omega_{\min}}^{\omega_{\max}} \Re\!\left( \overline{r(\omega)}\, g^{r}(\omega) \right) \, d\omega ,
\end{equation}
and
\begin{equation}\label{eq: shape derivative of Jres}
(J^{\mathrm{res}})^{\prime}(D;\theta)
= \int_{\partial D} g^{\mathrm{res}}\,(\theta \cdot \nu)\, d\sigma,
\end{equation}
with
\begin{align}
g^{\mathrm{res}}
=\frac{2}{M}\sum_{j=1}^M\Bigg[
\left(\frac{\lambda_j}{\Re{\lambda(\omega^{*}_j)}}-1\right)\frac{g^{\lambda,0}_j}{\Re{\lambda(\omega^{*}_j)}}
+
\left(\frac{\omega_j^{*}\lambda_{j,1}}{\Im{\lambda(\omega^{*}_j)}}-1\right)\frac{\omega_j^{*}g^{\lambda,1}_j}{\Im{\lambda(\omega^{*}_j)}}
\Bigg].
\end{align}
\end{proposition}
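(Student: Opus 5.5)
The plan is to differentiate the approximate reflection coefficient \eqref{eq: asymptotic of reflection coefficient} term by term. Only the frequency-independent modal quantities $\lambda_j$ and $\lambda_{j,1}$ depend on the shape, and \cref{thm: shape derivative of eigenvalue} already gives their shape derivatives as boundary integrals. The scalars $\omega$, $\lambda(\omega)=\omega^2/(\delta v_b^2)$, $\tau_m$ and $|Y|$ do not depend on $D$.

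The first step is to write the $j$-th summand as $f_j=2\mathrm{i}\omega\lambda_{j,1}/d_j$, with denominator $d_j=\lambda_j-\mathrm{i}\omega\lambda_{j,1}-\lambda(\omega)$. The quotient rule gives
\begin{align*}
f_j' = \frac{2\mathrm{i}\omega\,\bigl(\lambda_{j,1}'\, d_j-\lambda_{j,1}\, d_j'\bigr)}{d_j^2},
\qquad d_j'=\lambda_j'-\mathrm{i}\omega\lambda_{j,1}'.
\end{align*}
In the numerator the terms $\mp\mathrm{i}\omega\lambda_{j,1}\lambda_{j,1}'$ cancel. This leaves $(\lambda_j-\lambda(\omega))\lambda_{j,1}'-\lambda_{j,1}\lambda_j'$.

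Next I substitute $\lambda_j'=\int_{\partial D}g^{\lambda,0}_j(\theta\cdot\nu)\,d\sigma$ from \eqref{eq: shape derivative of eigenpair} and $\lambda_{j,1}'=\int_{\partial D}g^{\lambda,1}_j(\theta\cdot\nu)\,d\sigma$ from \eqref{eq: shape derivative of lambdaj1}. Linearity then pulls everything under a single boundary integral, which gives the stated density $g^r(\omega)$ with the minus sign inherited from $r$.

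The remainder factors $(1+\mathcal{O}(\omega))$ and $\mathcal{O}(\omega)$ are carried along formally. I will simply assume, as the paper does, that their shape derivatives are of the same order, since $r$ here is understood as the leading-order reduced model. Making this uniform in the shape would be the only genuinely delicate point, and I would not attempt it; the statement is at the level of the approximate $r$.

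For $J^{\mathrm{ref}}$, differentiating under the frequency integral (justified since the integrand is smooth in $D$ on the compact band, away from the complex poles) gives $(|r|^2)'=2\Re(\overline{r}\,r')$. Inserting the representation of $r'$ and using Fubini to swap the $\omega$ and $\partial D$ integrals yields \eqref{eq: shape derivative of Jref}.

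For $J^{\mathrm{res}}$, the quantities $\Re\lambda(\omega_j^*)$ and $\Im\lambda(\omega_j^*)$ are shape independent. The chain rule on each square therefore gives $2(\lambda_j/\Re\lambda-1)\lambda_j'/\Re\lambda$ and $2(\omega_j^*\lambda_{j,1}/\Im\lambda-1)\,\omega_j^*\lambda_{j,1}'/\Im\lambda$. Substituting the boundary densities again gives $g^{\mathrm{res}}$.

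The main care is in the algebraic cancellation in the quotient rule, together with the fact that, in this approximation, $\lambda(\omega)$ carries no shape dependence.
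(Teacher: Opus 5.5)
Your proposal is correct and matches the computation the paper relies on; the proposition is stated there without a separate proof. The steps are the same: the quotient rule, in which the $\mathrm{i}\omega\lambda_{j,1}\lambda_{j,1}'$ terms cancel; substitution of the Hadamard densities $g^{\lambda,0}_j$ and $g^{\lambda,1}_j$ from \cref{thm: shape derivative of eigenvalue}; the identity $(|r|^2)'=2\Re(\overline{r}\,r')$ for $J^{\mathrm{ref}}$; and the chain rule for $J^{\mathrm{res}}$, using that $\lambda(\omega_j^*)$ does not depend on the shape. Your formal handling of the remainders is at the paper's level, and the product-rule cross term $f_j\cdot\mathcal{O}(\omega)$ is harmless. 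For passive $v_b$, $\Im\lambda(\omega)\ge 0$ and $\lambda_{j,1}\ge 0$ give $|d_j|\ge\omega\lambda_{j,1}$, hence $|f_j|\le 2$, so that term is absorbed into the additive $\mathcal{O}(\omega)$.
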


\subsection{Optimal design algorithm}

In the previous sections, we have derived shape derivative formulas in a general, nonparametric framework. To obtain a practical optimization algorithm, we now let $d=2$ and restrict the admissible shapes to a finite dimensional, parametrized family.

For each resonator $D_j\subset\mathbb{R}^{2}$, we assume that $D_j$ is star shaped with respect to a reference point $x_{0,j}\in\mathbb{R}^{2}$. Its boundary $\partial D_j$ is parametrized in polar coordinates as
\begin{align*}
    x(t;p_j)=x_{0,j}+r(t;p_j)(\cos t,\sin t), \qquad t \in[0,2\pi].
\end{align*}
We approximate the radial function $r(t;p_j)$ by a truncated Fourier series,
\begin{align*}
    r(t;p_j)=a_{0,j}\left(1+\frac{1}{2M}\sum_{i=1}^{M}\left(a_{i,j}\cos(it)+b_{i,j}\sin(it)\right)\right),
    \qquad t\in[0,2\pi],
\end{align*}
where $a_{0,j}>0$ controls the overall size and the remaining Fourier coefficients describe shape perturbations. The associated parameter vector is
\begin{align*}
    p_j=\bigl(x_{0,j},a_{0,j},a_{1,j},\dots ,a_{M,j},\,b_{1,j},\dots ,b_{M,j}\bigr)^{\top}.
\end{align*}
Collecting the parameters for all resonators, we define the global design vector
\begin{align*}
    p=\bigl(p_1,\dots,p_M\bigr)^{\top}.
\end{align*}
With this parametrization, the shape optimization problem reduces to a finite dimensional optimization problem in the design variable $p$.

To compute gradients with respect to the parameters, we relate parametric derivatives to the general shape derivative. For a given parameter $p_{i,j}$, we choose the deformation field as the parametric sensitivity of the boundary mapping,
$\theta=\frac{\partial x}{\partial p_{i,j}}$. By the chain rule, the corresponding partial derivative of the objective satisfies $\frac{\partial J}{\partial p_{i,j}} = J'(D;\theta)$, which can be evaluated using the Hadamard boundary integral representation derived earlier. This provides a direct way to compute the gradient $\nabla_p J$ and thus enables the use of standard gradient based optimization methods. The resulting optimal shape design procedure is summarized in \Cref{algo: optimal design}.

\begin{algorithm}[!h]
\caption{Optimal shape design.}
\label{algo: optimal design}
\begin{algorithmic}[1]
\REQUIRE Material parameters $\rho_m$, $\rho_b$, $\kappa_m$, $\kappa_b$; periodic cell $Y$; incident direction $\hat{\theta}$; frequency interval $(\omega_{\min}, \omega_{\max})$; initial design $p^{(0)}$; objective functional $J$.
\FOR{$n = 0,1,\dots,N-1$}
    \STATE Construct the capacitance matrix $C$ using the boundary integral formulation \eqref{eq: def of capacity matrix by BIE};
    \STATE Solve the generalized eigenvalue problem \eqref{eq: eigenvalue problem of C} to obtain eigenpairs $(\lambda_j, u_j)$;
    \STATE Compute the shape derivatives of $\lambda_j$ and $\lambda_{j,1}$ using \eqref{eq: shape derivative of eigenpair} and \eqref{eq: shape derivative of lambdaj1};
    \STATE Evaluate the shape gradient of $J$ using \eqref{eq: shape derivative of Jref} or \eqref{eq: shape derivative of Jres};
    \STATE Update the design from $p^{(n)}$ to $p^{(n+1)}$ using a gradient based method.
\ENDFOR
\ENSURE Optimized design parameters $p^{(N)}$.
\end{algorithmic}
\end{algorithm}

\section{Numerical experiments} \label{sec:numerics}

In this section, we present numerical examples to validate the accuracy of the reduced order model in the subwavelength regime and to demonstrate broadband absorption optimal design based on the reduced order model. For simplicity, we restrict ourselves to the two-dimensional case $d=2$. Unless otherwise stated, we adopt the following parameter settings throughout:
\begin{itemize} 
    \item Incident and reflection directions: $\hat{\theta}_{\pm}=(0,\pm 1)$;
    \item Unit cell: $Y=[-L/2,L/2]$ with period $L=20$;
    \item Material parameters: $v_m=1$, $v_b=1-0.05\mathrm{i}$, and $\delta=0.001$;
    \item Frequency band of interest: $\omega_{\min}=0.01$ and $\omega_{\max}=0.1$;
    \item Shape parameters of the $j$\textsuperscript{th} resonator: $a_{0,j}\in[0.1,1]$ and $a_{i,j}, b_{i,j}\in[-1,1]$ for $1\le j\le N$ (number of resonators) and $1\le i\le M$ (number of shape parameters).
\end{itemize}

\subsection{Efficiency of the reduced order model}

To evaluate the accuracy and robustness of the reduced order model (ROM), we benchmark it against the exact full order solution. Specifically, we solve \eqref{eq: integral scattering problem} and then evaluate the reflection coefficient $r(\omega)$ using \eqref{eq: formula of reflection coefficient}. We begin with the simplest setting of a single resonator, and then consider configurations with multiple resonators, including arrangements with three and nine resonators. This progression allows us to examine whether the ROM retains its predictive accuracy as the number of geometric features increases and modal interactions become more pronounced.

Figure~\ref{fig:rom_accuracy_1_vs_3} compares the exact and ROM results for the single-resonator and three-resonator unit cells. For both configurations, the ROM reproduces the geometric description of the unit cell (top row) and yields absorptance spectra (bottom row) that closely match the exact solution across the frequency range of interest, capturing both the resonance location and the overall spectral trend. This agreement indicates that the reduced order model is already sufficiently expressive in the low-complexity regime and remains stable when the problem size increases from one to three resonators.

The assessment is further extended to more complex settings in \Cref{fig:rom_accuracy_3_vs_9}, where the three-resonator case is contrasted with a nine-resonator unit cell. Despite the substantially increased geometric complexity and potentially stronger coupling effects, the ROM continues to show excellent agreement with the exact solution in terms of both geometry representation and absorptance response. Importantly, the spectral comparison confirms that the ROM preserves the key physics governing the resonant behavior, namely accurate resonance frequencies and the corresponding absorptance levels, without introducing noticeable spurious features. Although small discrepancies remain between the ROM and the exact solution, they are within an acceptable range for practical prediction and design.

Overall, \Cref{fig:rom_accuracy_1_vs_3} and \Cref{fig:rom_accuracy_3_vs_9} demonstrate that the ROM maintains good accuracy as the geometry progresses from a simple single-resonator setting to more complex multi-resonator layouts with three and nine resonators. This consistent performance supports the use of the ROM as an efficient surrogate model for subsequent parametric sweeps and optimization studies, where repeated evaluations of the spectral response would otherwise be computationally prohibitive with the full order solver.

\begin{figure}[htbp]
    \centering
    % --- top row: geometry ---
    \subfloat[Single resonator: geometry in the unit cell]%
    {\includegraphics[width=.45\textwidth]{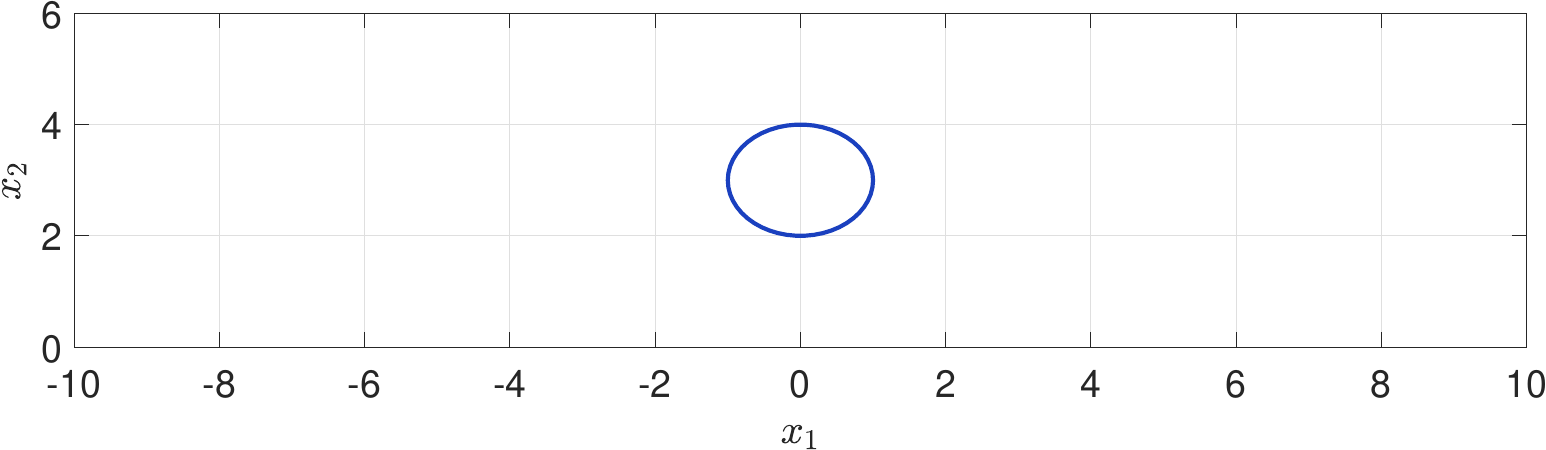}}
    \hfill
    \subfloat[Three resonators: geometry in the unit cell]%
    {\includegraphics[width=.45\textwidth]{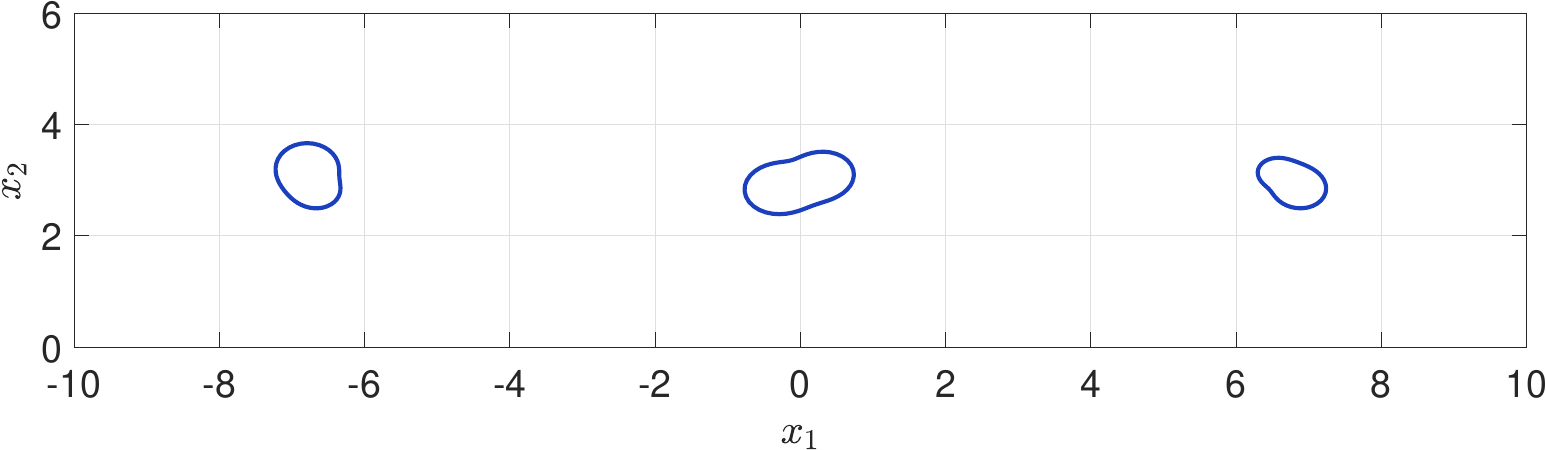}}\\[0.5ex]
    % --- bottom row: absorptance ---
    \subfloat[Single resonator: absorptance (exact vs.\ reduced order model)]%
    {\includegraphics[width=.45\textwidth]{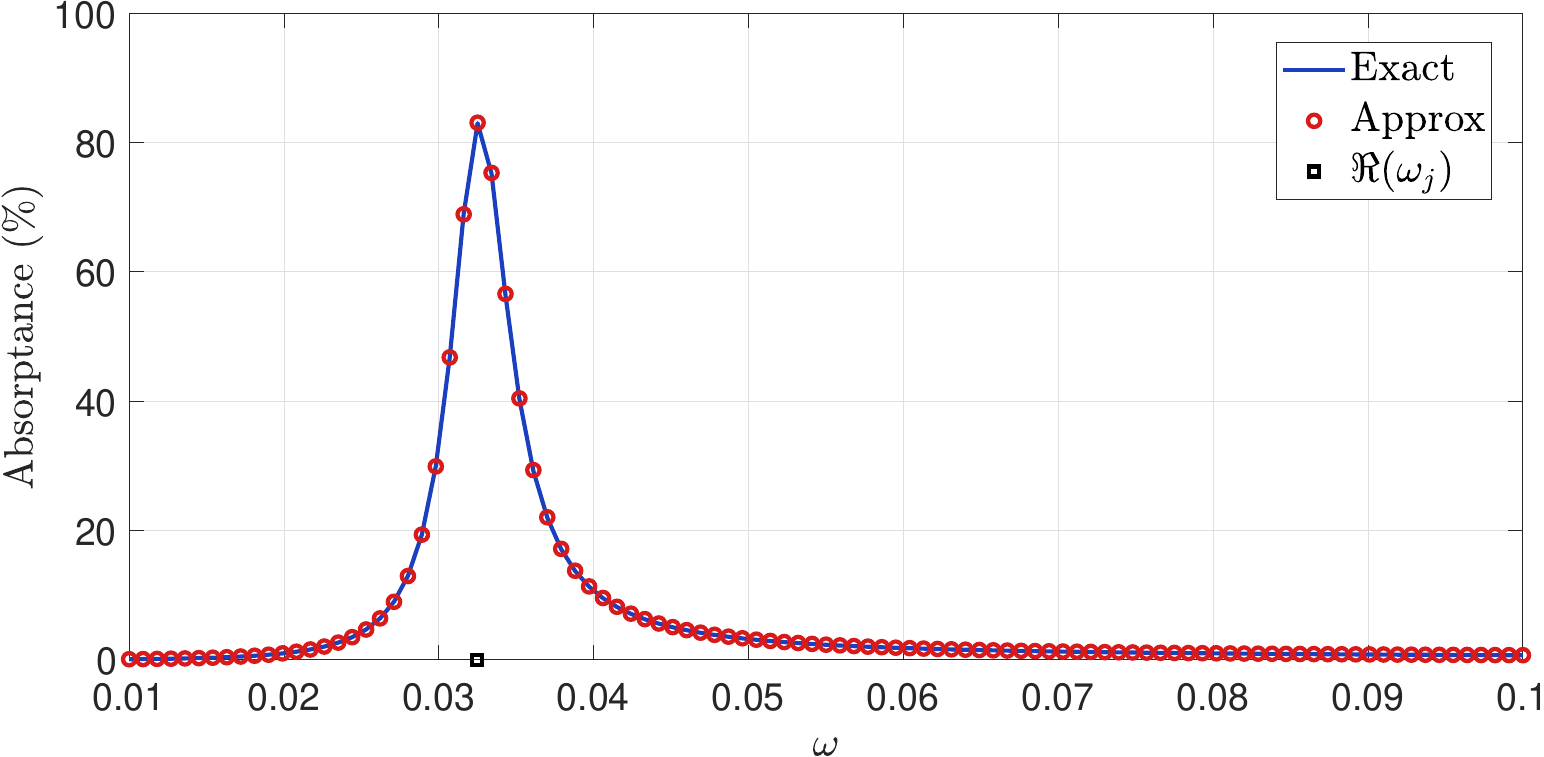}}
    \hfill
    \subfloat[Three resonators: absorptance (exact vs.\ reduced order model)]%
    {\includegraphics[width=.45\textwidth]{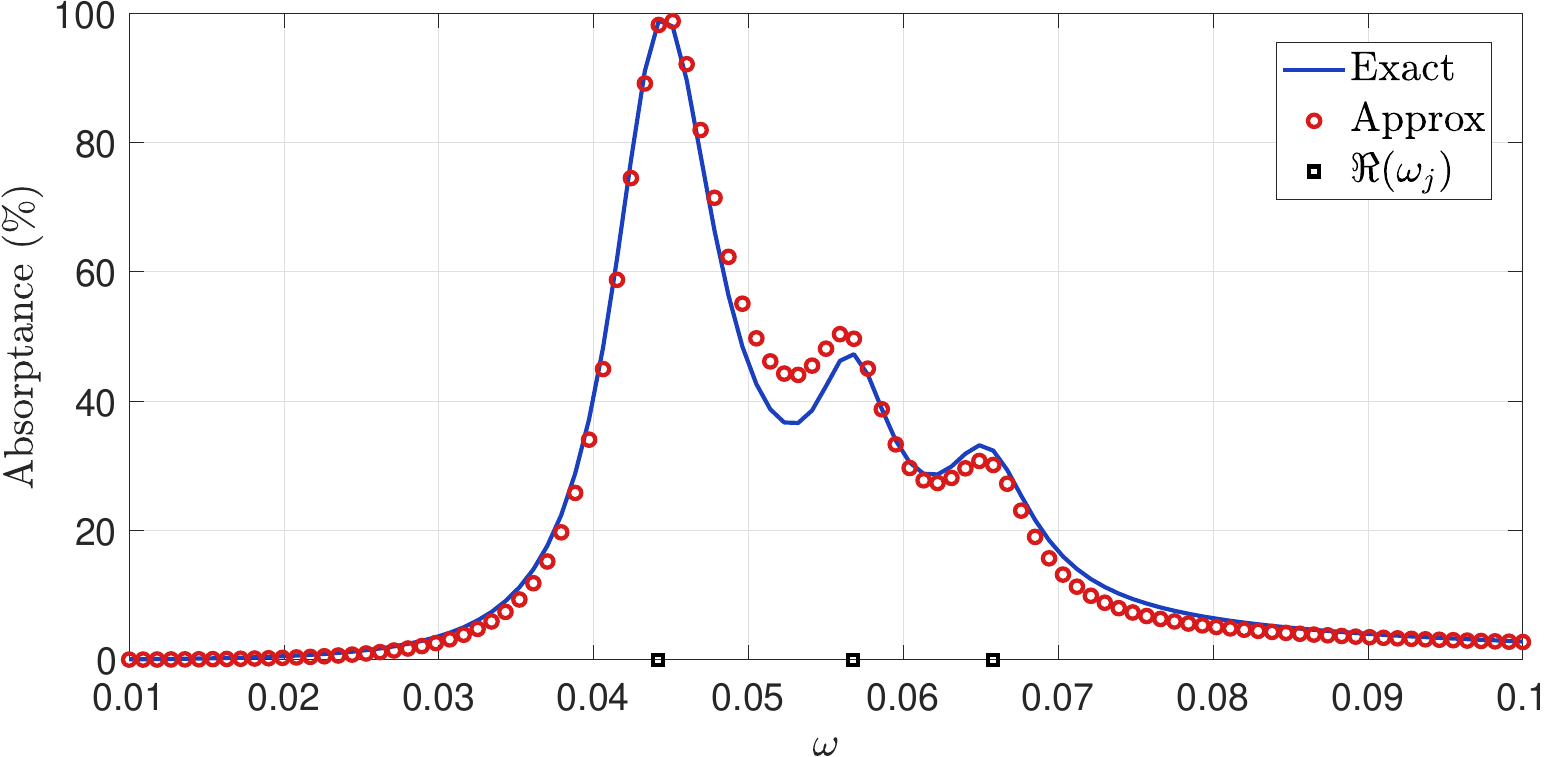}}
    \caption{Geometry (top) and absorptance spectra (bottom) for single and three resonator configurations (per unit cell), comparing the exact solution and the reduced order model.}
    \label{fig:rom_accuracy_1_vs_3}
\end{figure}
    
\begin{figure}[htbp]
    \centering
    % --- top row: geometry ---
    \subfloat[Three resonators: geometry in the unit cell]%
    {\includegraphics[width=.45\textwidth]{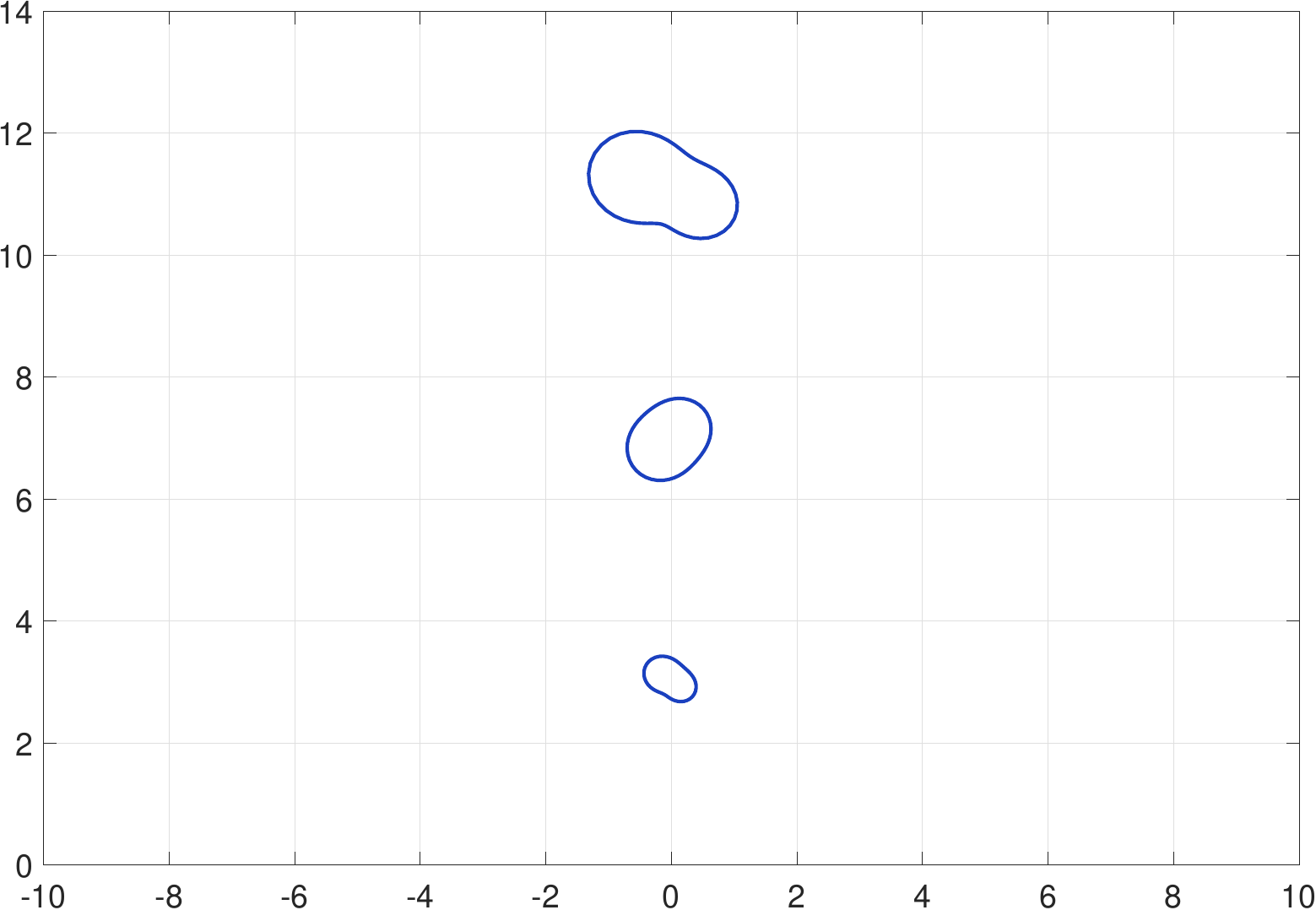}}
    \hfill
    \subfloat[Nine resonators: geometry in the unit cell]%
    {\includegraphics[width=.45\textwidth]{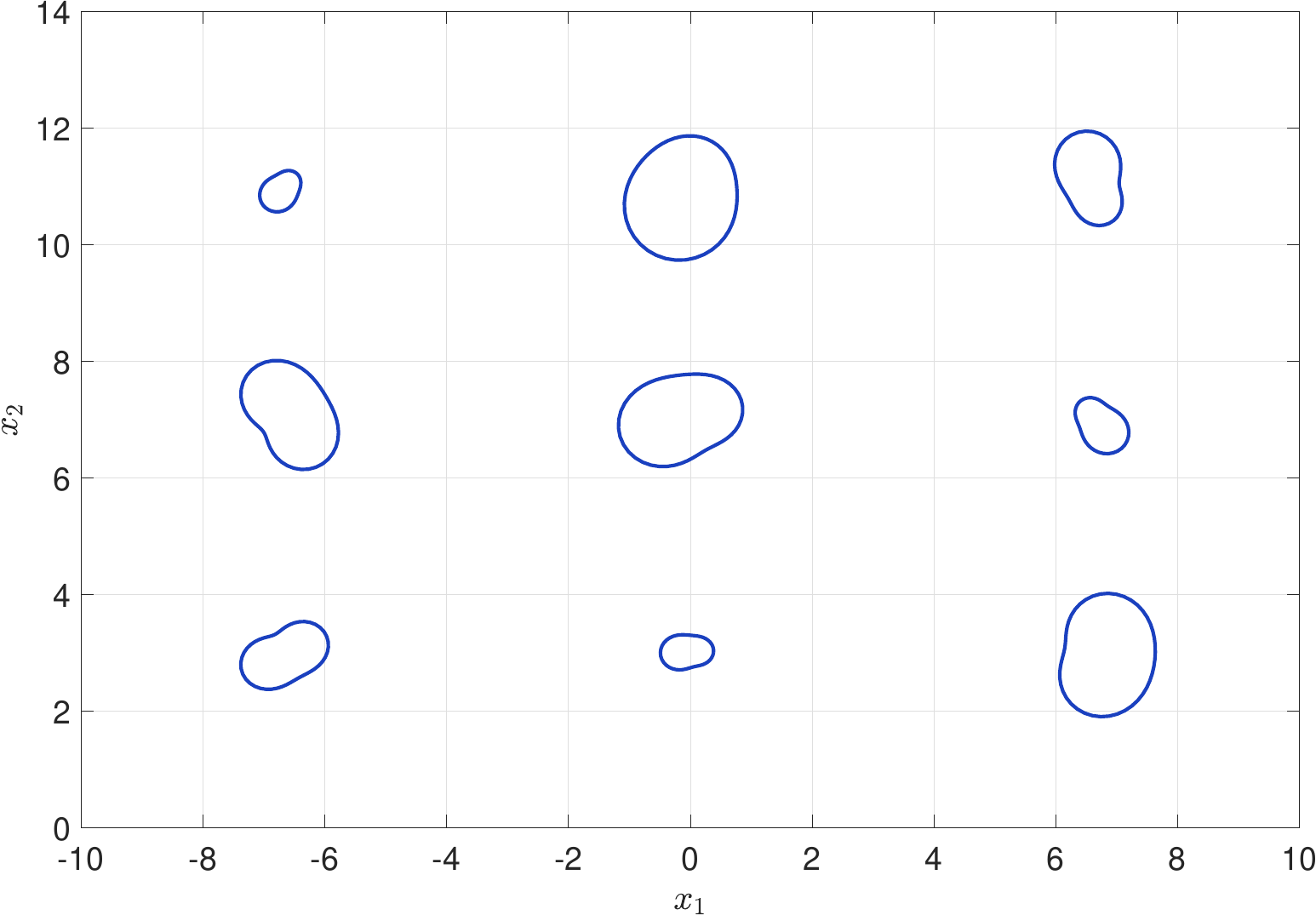}}\\[0.5ex]
    % --- bottom row: absorptance ---
    \subfloat[Three resonators: absorptance (exact vs.\ reduced order model)]%
    {\includegraphics[width=.45\textwidth]{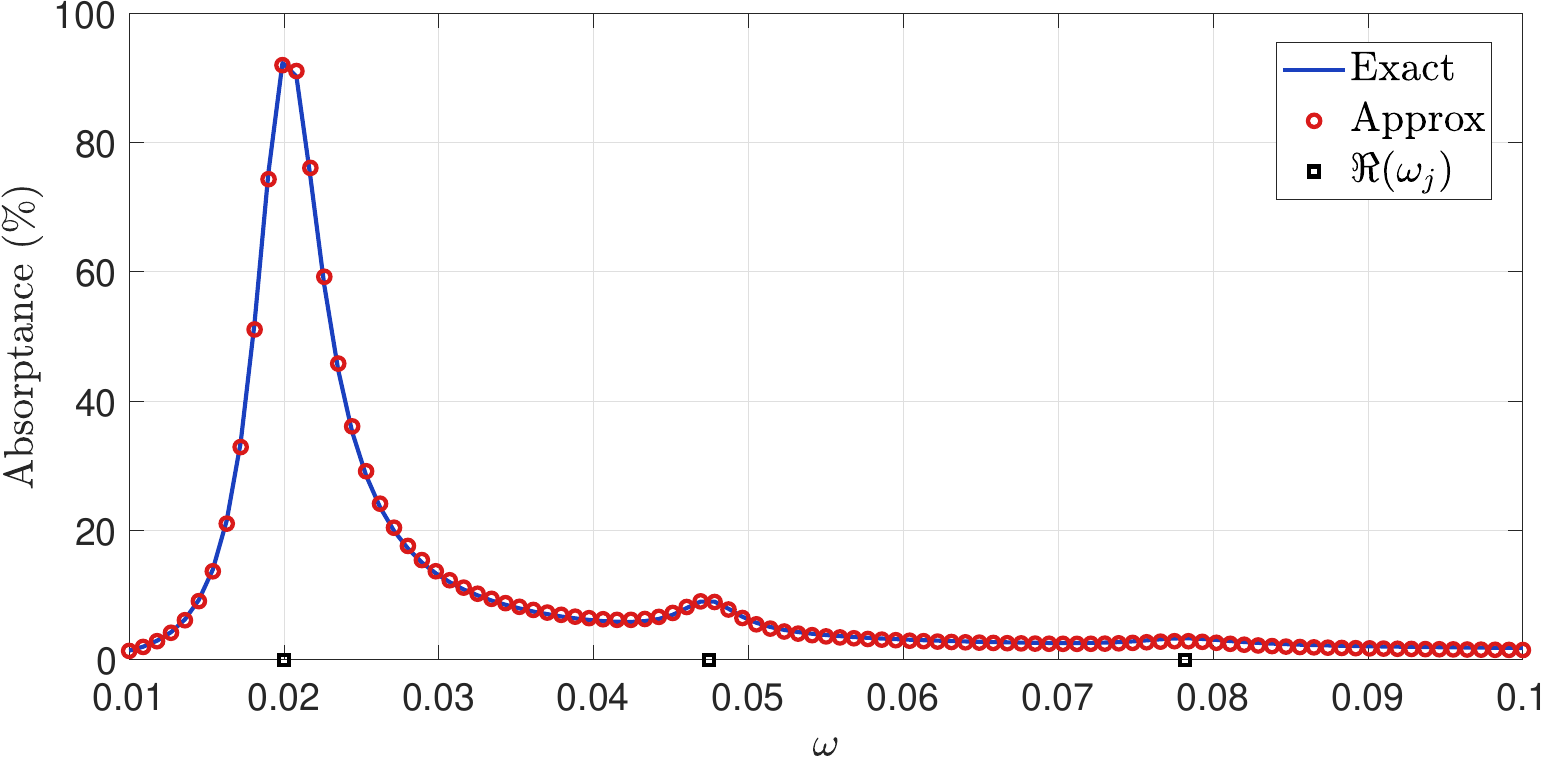}}
    \hfill
    \subfloat[Nine resonators: absorptance (exact vs.\ reduced order model)]%
    {\includegraphics[width=.45\textwidth]{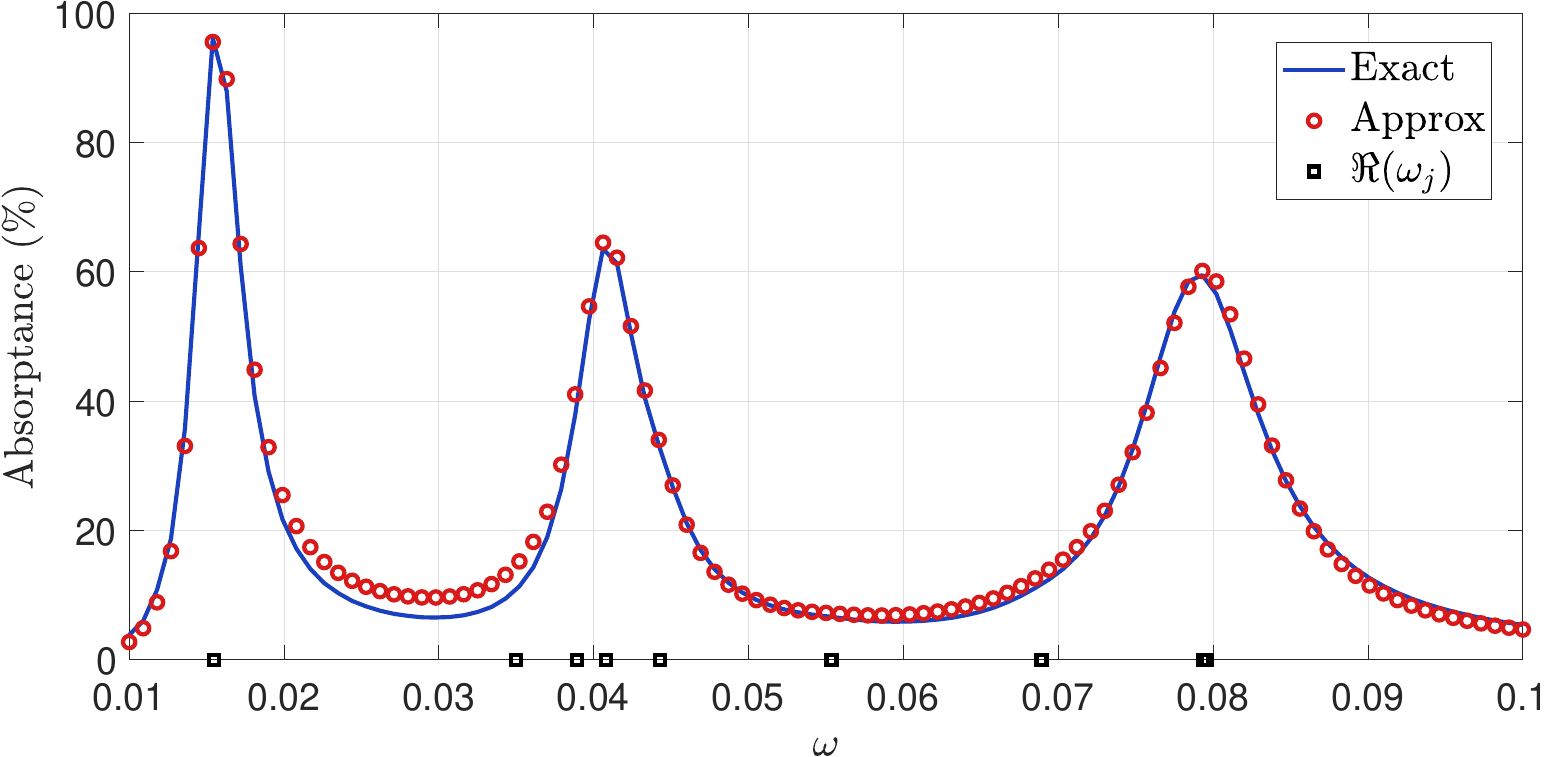}}
    \caption{Geometry (top) and absorptance spectra (bottom) for three and nine resonator configurations (per unit cell), comparing the exact solution and the reduced order model.}
    \label{fig:rom_accuracy_3_vs_9}
\end{figure}

\subsection{Shape optimization results}

We next present the shape optimization results for different layouts with increasing complexity: a single resonator ($1\times1$), three resonators arranged horizontally ($3\times1$) and vertically ($1\times3$), and nine resonators ($3\times3$) (see \Cref{fig:shape_opt_1x1_Jres_vs_Jref}--\Cref{fig:shape_opt_3x3_Jres_vs_Jref}). In each figure, the left column corresponds to the resonance driven objective $J^{\mathrm{res}}$ and the right column corresponds to the broadband reflectance objective $J^{\mathrm{ref}}$. The three panels respectively display the convergence history, the optimized deformation, and the absorptance spectra before and after optimization.

For the simplest configurations ($1\times1$, and to a large extent the $3$ resonator layouts), optimizing $J^{\mathrm{res}}$ yields a pronounced improvement in the spectral response. This is consistent with the role of $J^{\mathrm{res}}$ as a mode by mode enforcement of the critical coupling matching conditions: by driving $\Re \lambda(\omega_j^*)$ towards $\lambda_j$ and $\Im \lambda(\omega_j^*)$ towards $\omega_j^*\lambda_{j,1}$ at selected target frequencies, the optimization can produce sharp absorption peaks, and in several instances the absorptance approaches $100\%$ at the prescribed frequencies. In contrast, the functional $J^{\mathrm{ref}}$ measures the average of $|r(\omega)|^2$ over the entire band. As a result, its minimization tends to allocate the optimization effort to those portions of the spectrum that contribute most to the integral. In particular, low-frequency resonant features may be comparatively underemphasized if their contribution to the band averaged reflectance is small, so that improvements at the lowest resonances can be less pronounced even when the overall average reflectance decreases.

As the number of resonators increases, the optimization landscape becomes more intricate due to stronger inter resonator coupling and the resulting hybridization of resonant modes. This trend is most evident for the $3\times3$ configuration in \Cref{fig:shape_opt_3x3_Jres_vs_Jref}, where minimizing $J^{\mathrm{res}}$ becomes noticeably more challenging: the objective attempts to enforce multiple critical-coupling conditions simultaneously, but the corresponding resonant quantities are no longer weakly interacting. Consequently, adjusting the shape to improve the matching for one targeted resonance can deteriorate the matching for another, leading to a more constrained and less uniformly successful optimization outcome. In the same regime, the broadband objective $J^{\mathrm{ref}}$ is often more effective in producing an overall reduction of reflectance across the band, because it does not require a one-to-one alignment between prescribed target frequencies and individual resonance branches; instead, it exploits the collective response of the coupled system to reduce the integrated reflectance.

To avoid suboptimal local minima we use UniformAdam \cite{nicolet2021large} on the shape gradient density. We observe that the initial conditions strongly affect the result of the optimization of $J^\mathrm{ref}$ and $J^\mathrm{res}$. In \Cref{fig:shape_opt_comparison_J_ref} for the objective $J^\mathrm{ref}$, we can see that for an initial mesh consisting only of circles, the initial radii can lead to very different local minima. Similarly, we show that more vertical structures with adjusted, shorter periods tend to produce a broader range of high absorption compared to other configurations.
With $J^\mathrm{res}$, the dependence on initial conditions is even more pronounced in \Cref{fig:shape_opt_comparison_J_res}, particularly with different initial radii, where a larger initial radius produces a significantly better result than smaller radii.

Finally, these observations also clarify an important practical point regarding broadband design. While the motivating discussion suggests that having many resonance frequencies with real parts ``densely distributed'' in $[\omega_{\min},\omega_{\max}]$ can promote peak overlap, the numerical results indicate that a strictly uniform distribution of $\Re\omega_j(\delta)$ is not always necessary (and can even be suboptimal) once multiple resonators interact. In strongly coupled configurations, the relevant absorption features arise from collective modes whose locations and widths are shaped by coupling and loss, so that broadband superabsorption can be achieved through an appropriate coupled spectral arrangement rather than by enforcing an \emph{a priori} uniform spacing of the resonance real parts.

\begin{figure}[htbp]
    \centering
    \subfloat[Convergence history: $J^{\mathrm{res}}$ (left) and $J^{\mathrm{ref}}$ (right)]%
    {\includegraphics[width=.90\textwidth]{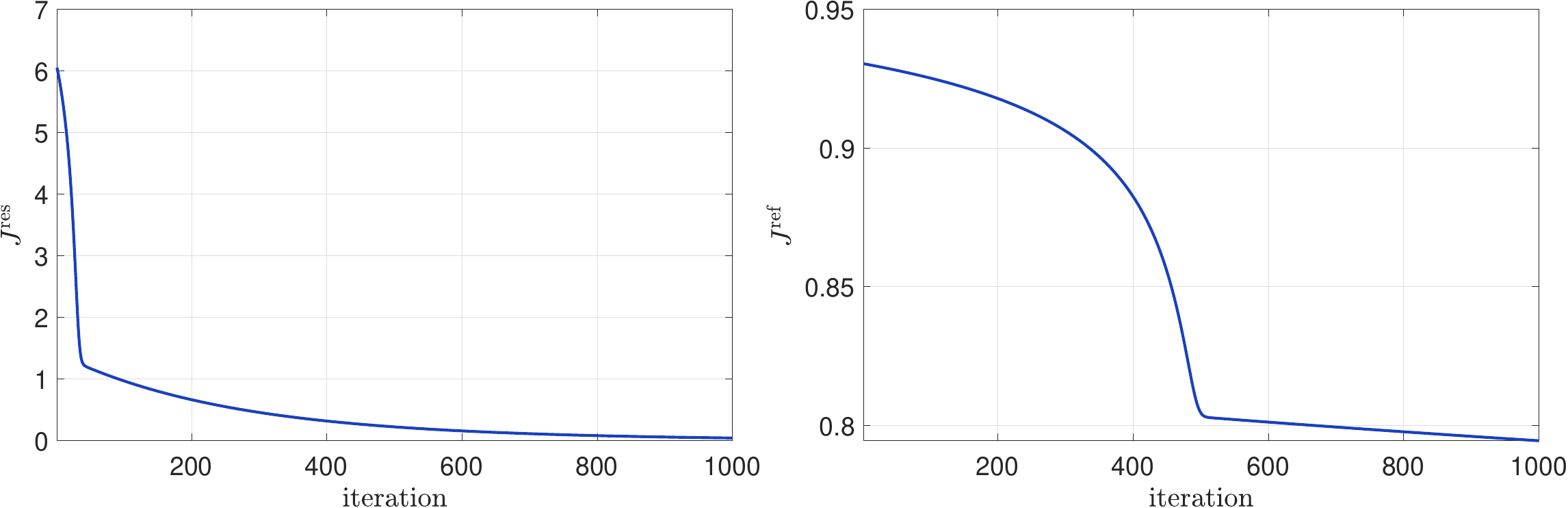}}\\[0.5ex]
    \subfloat[Shape deformation: optimized shapes for $J^{\mathrm{res}}$ (left) and $J^{\mathrm{ref}}$ (right)]%
    {\includegraphics[width=.90\textwidth]{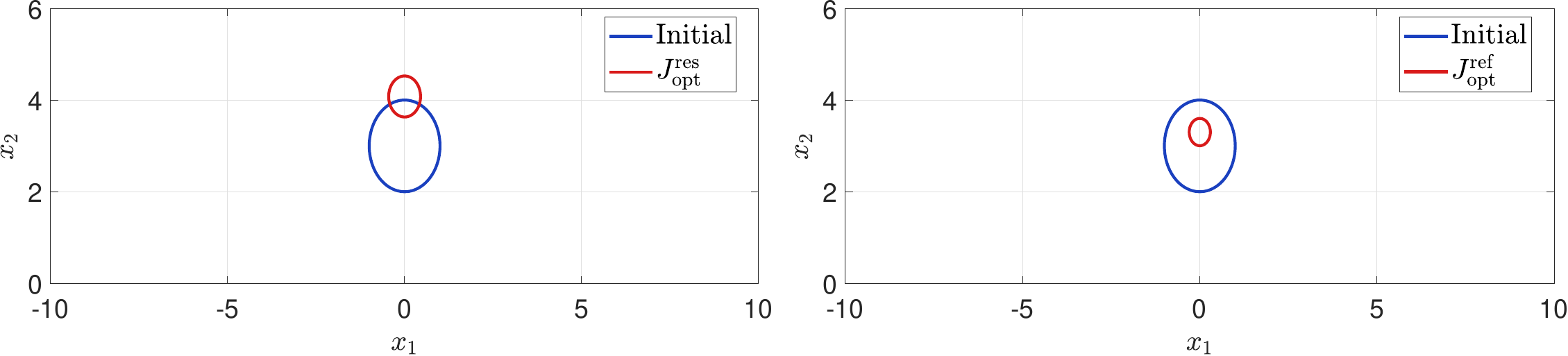}}\\[0.5ex]
    \subfloat[Absorptance spectra: initial vs.\ optimized for $J^{\mathrm{res}}$ (left) and $J^{\mathrm{ref}}$ (right)]%
    {\includegraphics[width=.90\textwidth]{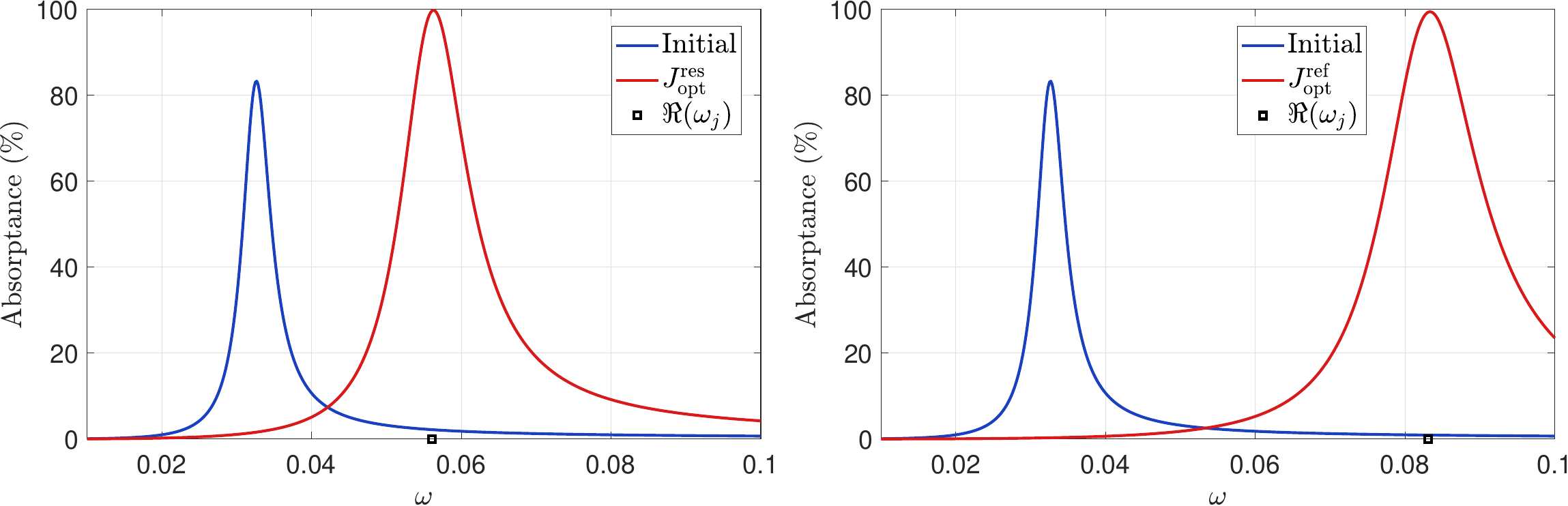}}
    \caption{Shape optimization results for a single resonator configuration ($1\times 1$). The left column corresponds to $J^{\mathrm{res}}$ and the right column to $J^{\mathrm{ref}}$.}
    \label{fig:shape_opt_1x1_Jres_vs_Jref}
\end{figure}

\begin{figure}[htbp]
    \centering
    \subfloat[Convergence history: $J^{\mathrm{res}}$ (left) and $J^{\mathrm{ref}}$ (right)]%
    {\includegraphics[width=.90\textwidth]{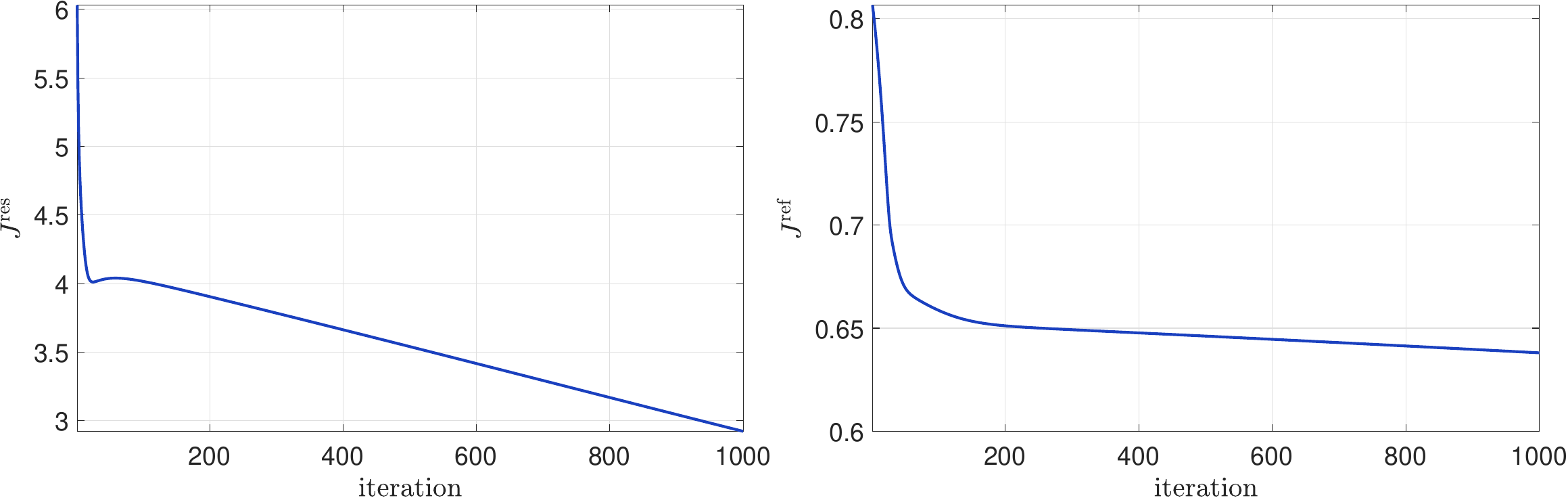}}\\[0.5ex]
    \subfloat[Shape deformation: optimized shapes for $J^{\mathrm{res}}$ (left) and $J^{\mathrm{ref}}$ (right)]%
    {\includegraphics[width=.90\textwidth]{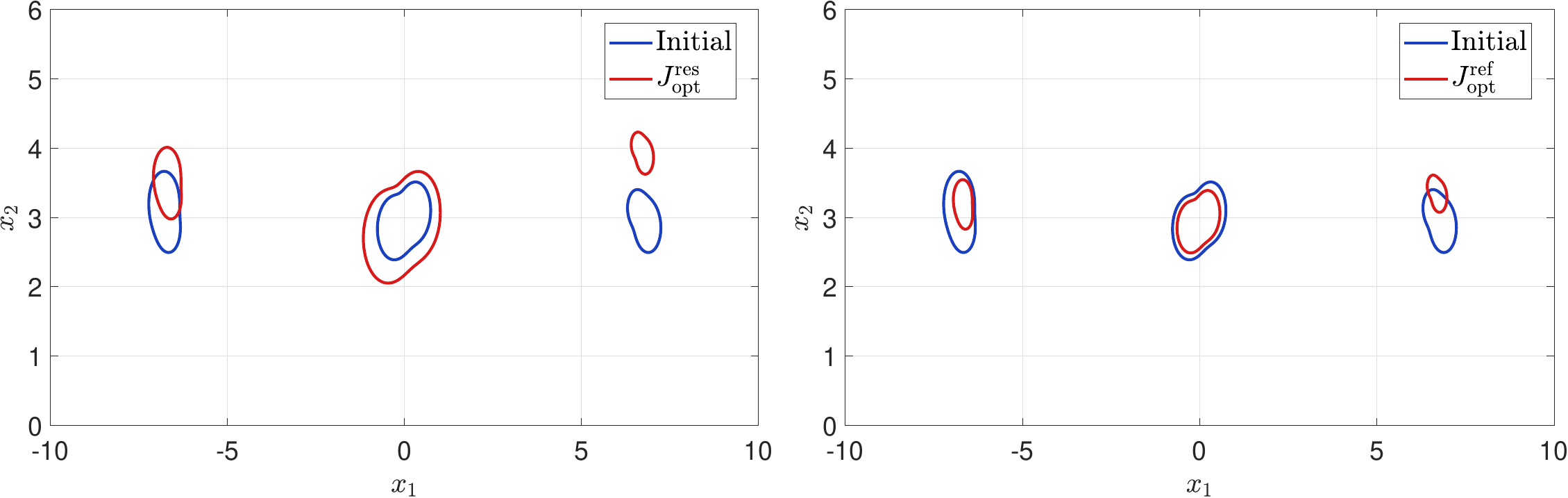}}\\[0.5ex]
    \subfloat[Absorptance spectra: initial vs.\ optimized for $J^{\mathrm{res}}$ (left) and $J^{\mathrm{ref}}$ (right)]%
    {\includegraphics[width=.90\textwidth]{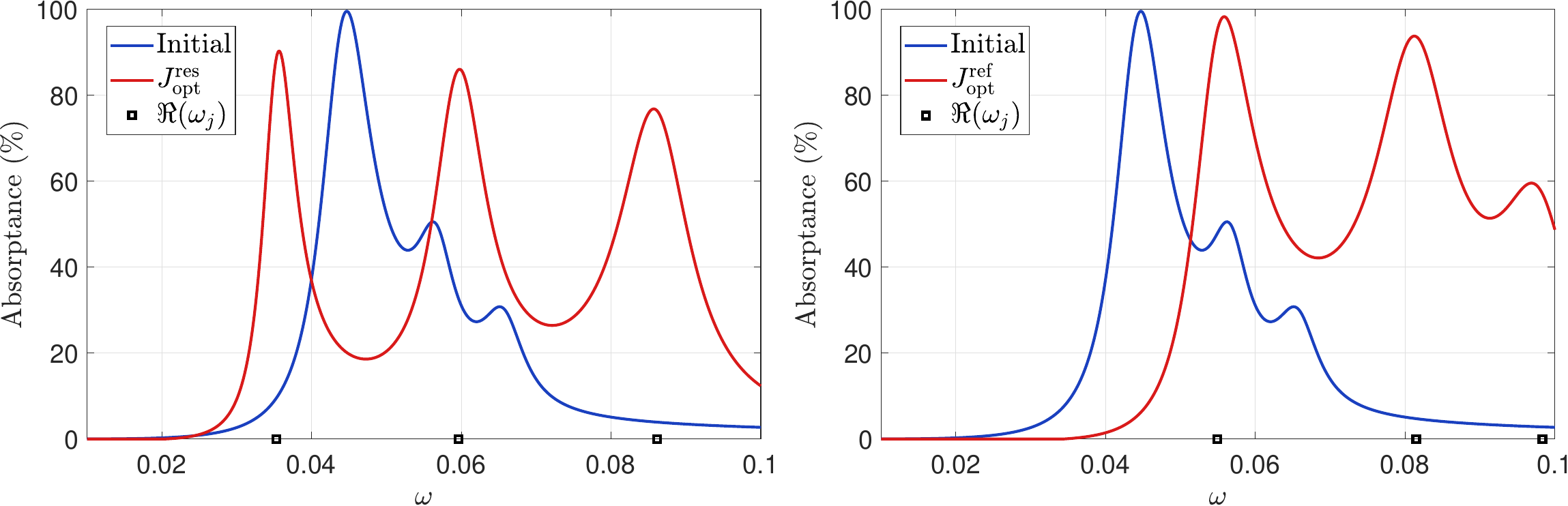}}
    \caption{Shape optimization results for a three resonators configuration arranged horizontally ($3\times 1$). The left column corresponds to $J^{\mathrm{res}}$ and the right column to $J^{\mathrm{ref}}$.}
    \label{fig:shape_opt_3x1_Jres_vs_Jref}
\end{figure}

\begin{figure}[htbp]
    \centering
    \subfloat[Convergence history: $J^{\mathrm{res}}$ (left) and $J^{\mathrm{ref}}$ (right)]%
    {\includegraphics[width=.90\textwidth]{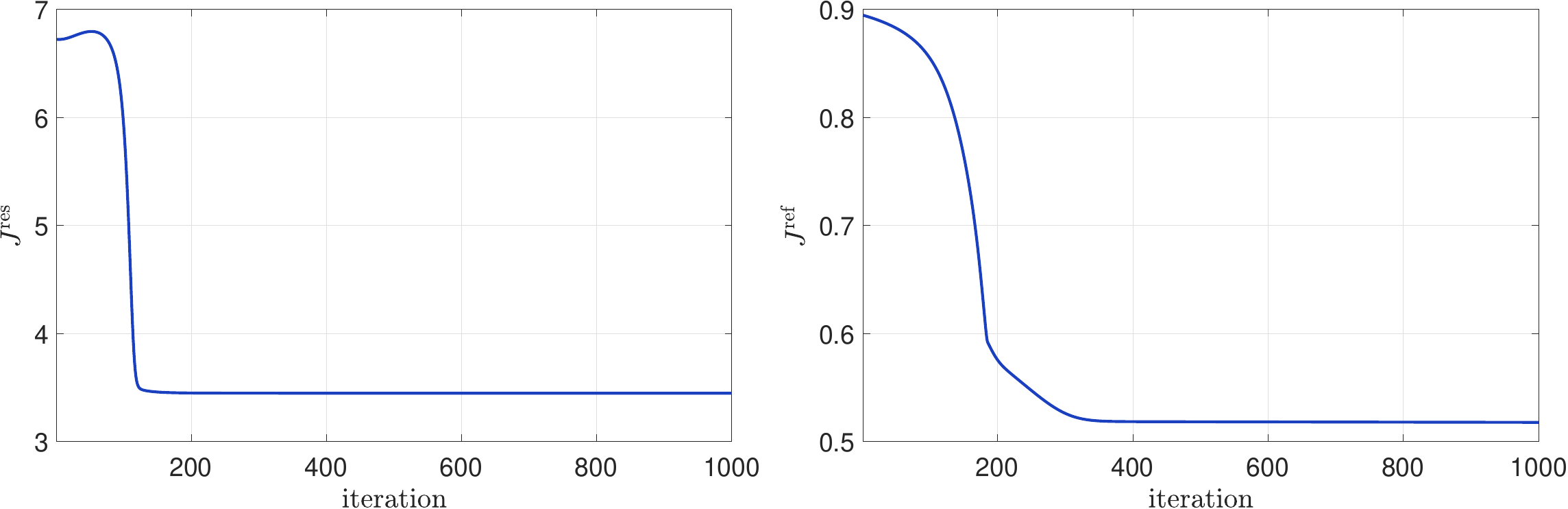}}\\[0.5ex]
    \subfloat[Shape deformation: optimized shapes for $J^{\mathrm{res}}$ (left) and $J^{\mathrm{ref}}$ (right)]%
    {\includegraphics[width=.90\textwidth]{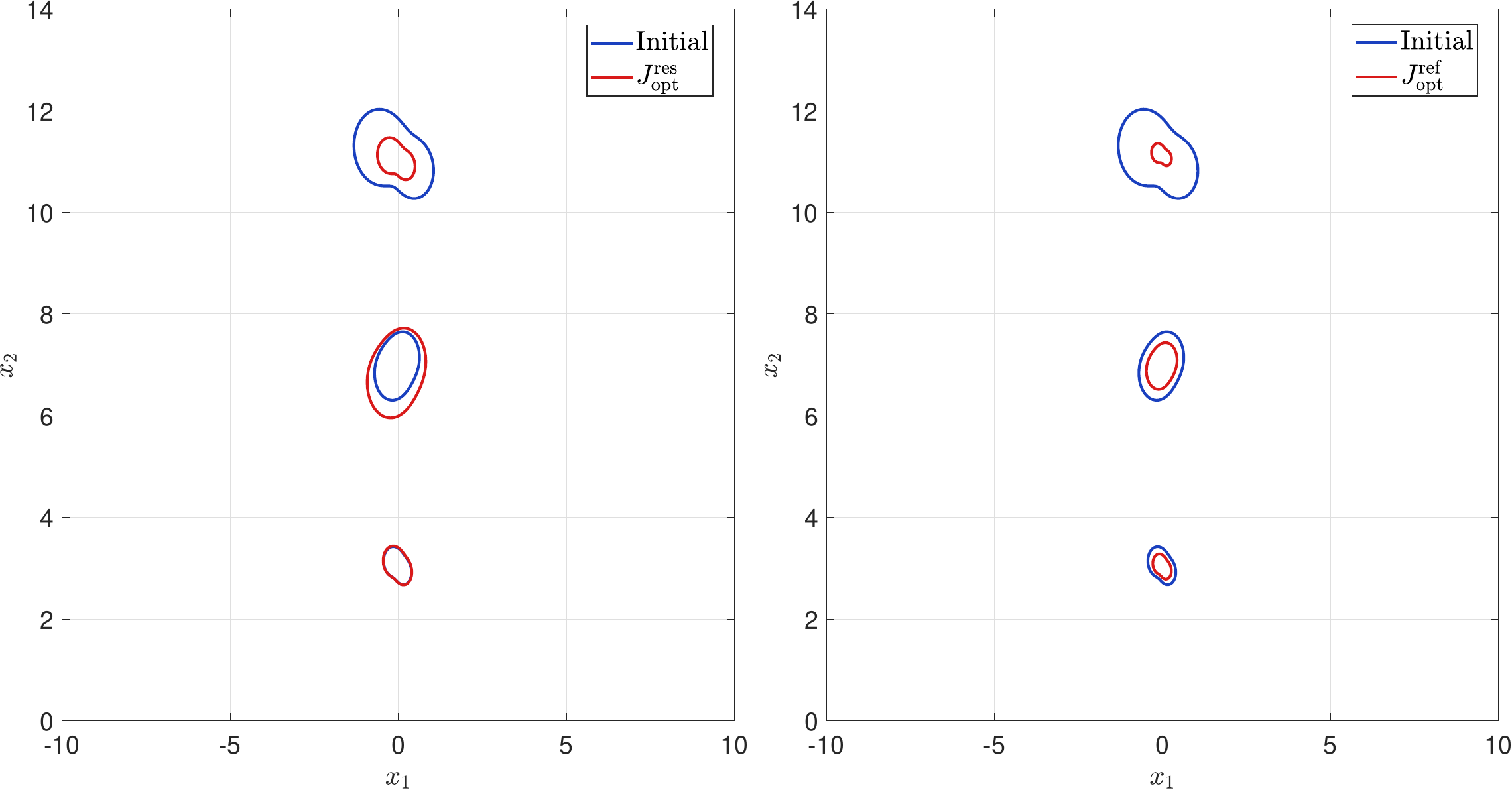}}\\[0.5ex]
    \subfloat[Absorptance spectra: initial vs.\ optimized for $J^{\mathrm{res}}$ (left) and $J^{\mathrm{ref}}$ (right)]%
    {\includegraphics[width=.90\textwidth]{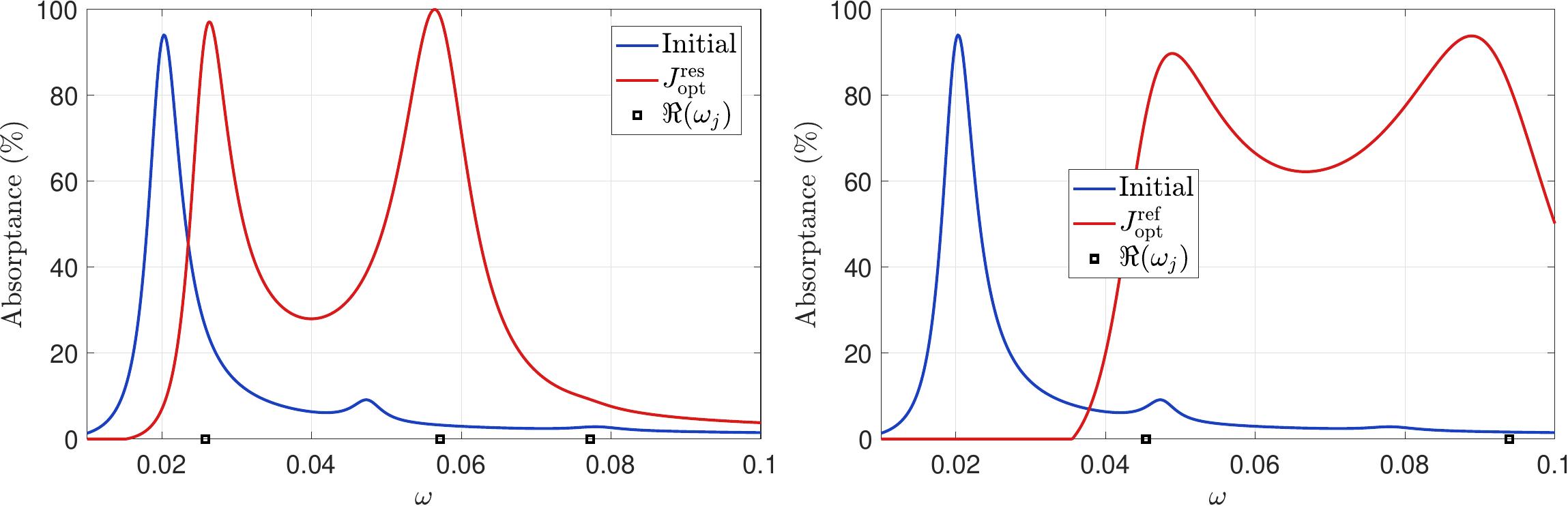}}
    \caption{Shape optimization results for a three resonators configuration arranged vertically ($1\times 3$). The left column corresponds to $J^{\mathrm{res}}$ and the right column to $J^{\mathrm{ref}}$.}
    \label{fig:shape_opt_1x3_Jres_vs_Jref}
\end{figure}

\begin{figure}[htbp]
    \centering
    \subfloat[Convergence history: $J^{\mathrm{res}}$ (left) and $J^{\mathrm{ref}}$ (right)]%
    {\includegraphics[width=.90\textwidth]{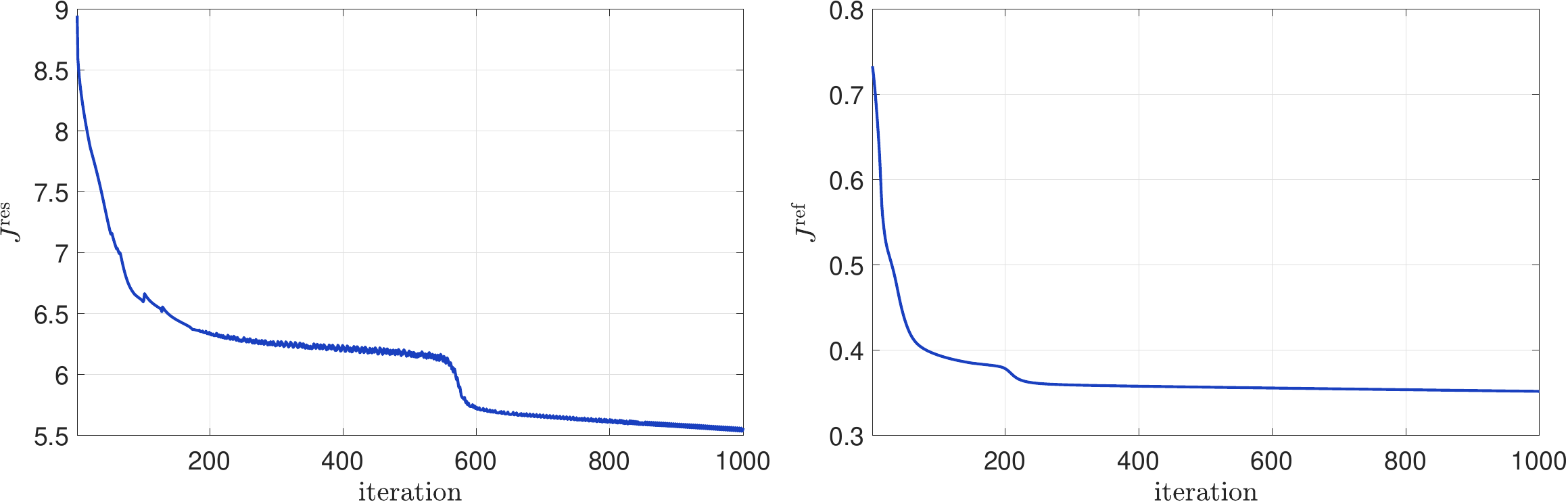}}\\[0.5ex]
    \subfloat[Shape deformation: optimized shapes for $J^{\mathrm{res}}$ (left) and $J^{\mathrm{ref}}$ (right)]%
    {\includegraphics[width=.90\textwidth]{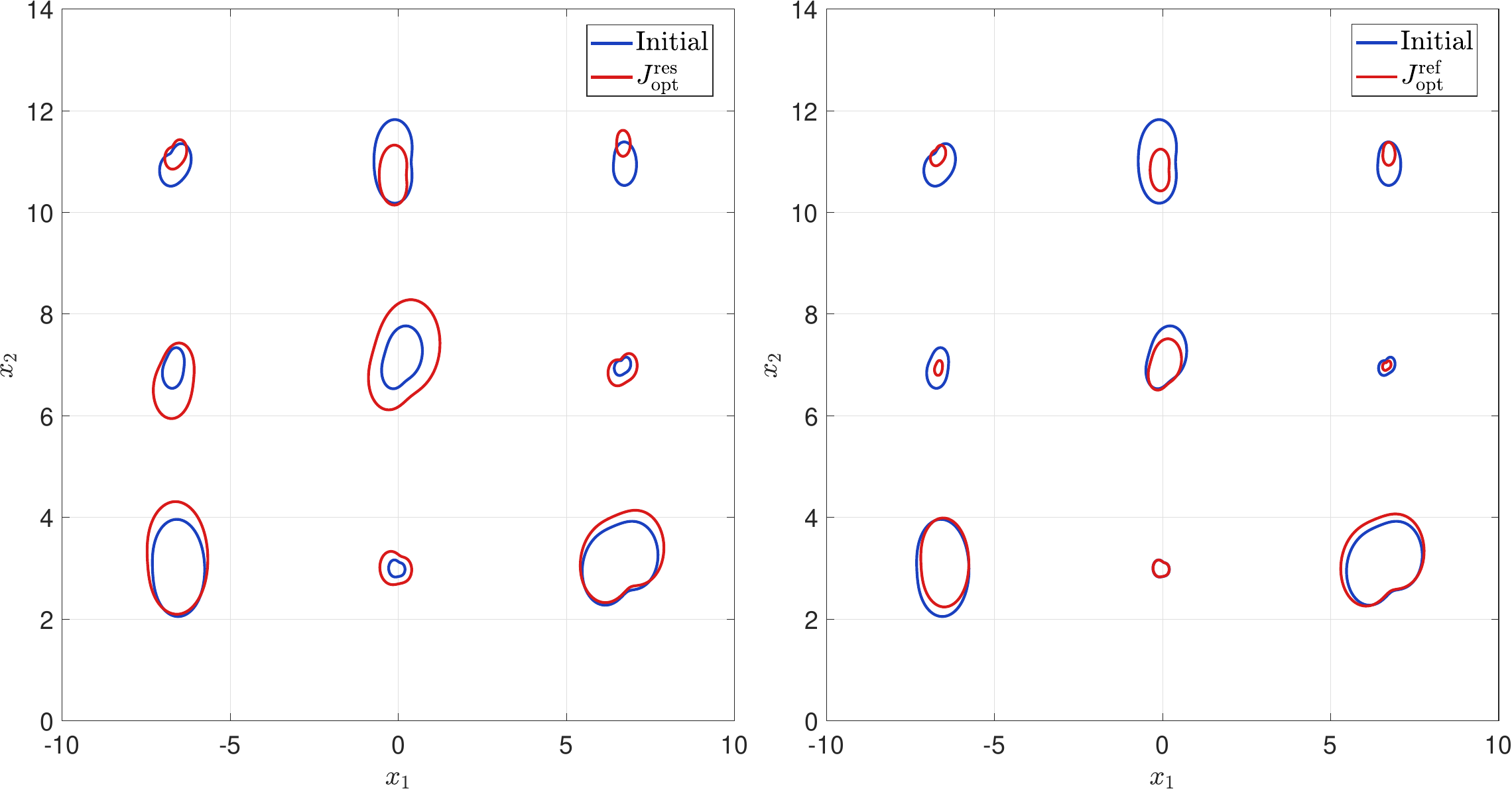}}\\[0.5ex]
    \subfloat[Absorptance spectra: initial vs.\ optimized for $J^{\mathrm{res}}$ (left) and $J^{\mathrm{ref}}$ (right)]%
    {\includegraphics[width=.90\textwidth]{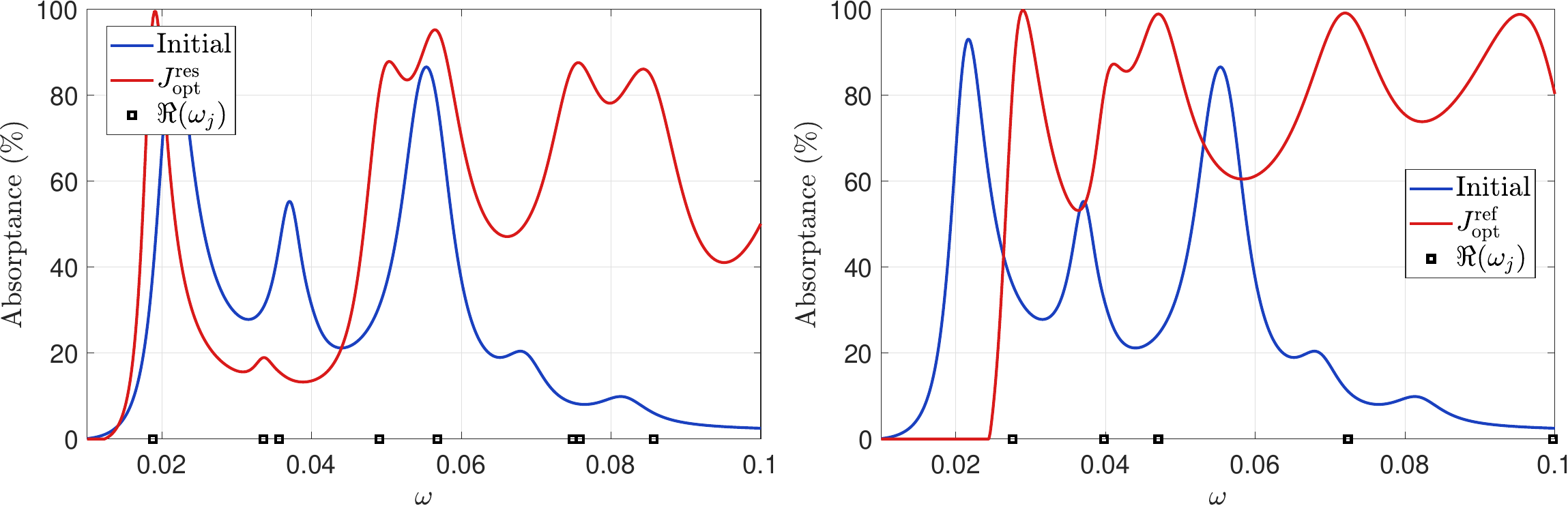}}
    \caption{Shape optimization results for a nine resonators configuration arranged vertically ($3\times 3$). The left column corresponds to $J^{\mathrm{res}}$ and the right column to $J^{\mathrm{ref}}$.}
    \label{fig:shape_opt_3x3_Jres_vs_Jref}
\end{figure}

\begin{figure}[htbp]
    \centering
    \subfloat[Absorptance spectra after optimization of $J^\mathrm{ref}$ for a grid of 9 resonators with circles of radius $r$ as the initial shape.]%
    {\includegraphics[width=.90\textwidth]{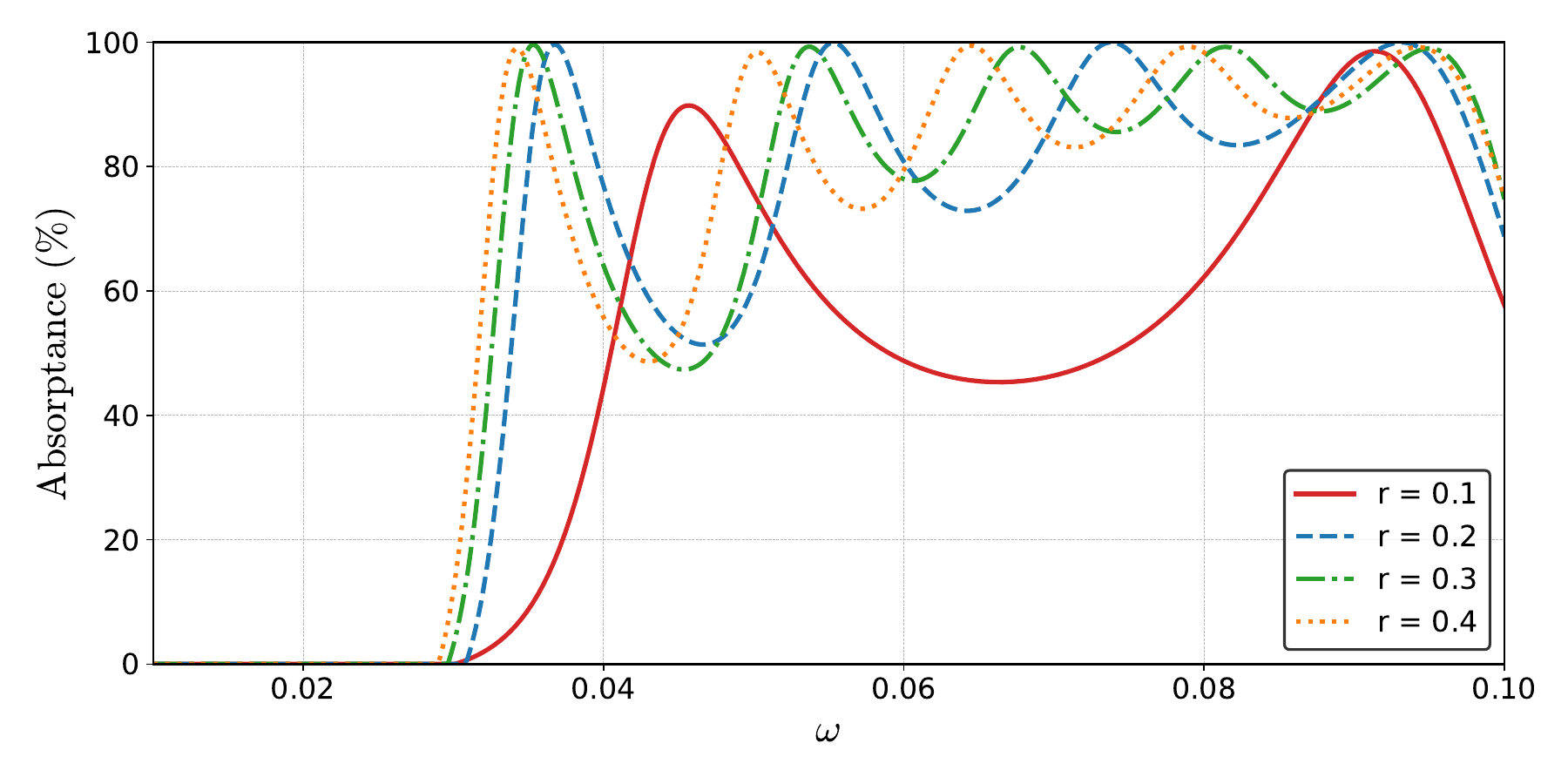}}\\[0.5ex]
    \subfloat[Absorptance spectra after optimization of $J^\mathrm{ref}$ for different grid types with 8 resonators. The period is adjusted according to the number of rows $L = 4(8/N_\text{rows} + 1).$]%
    {\includegraphics[width=.90\textwidth]{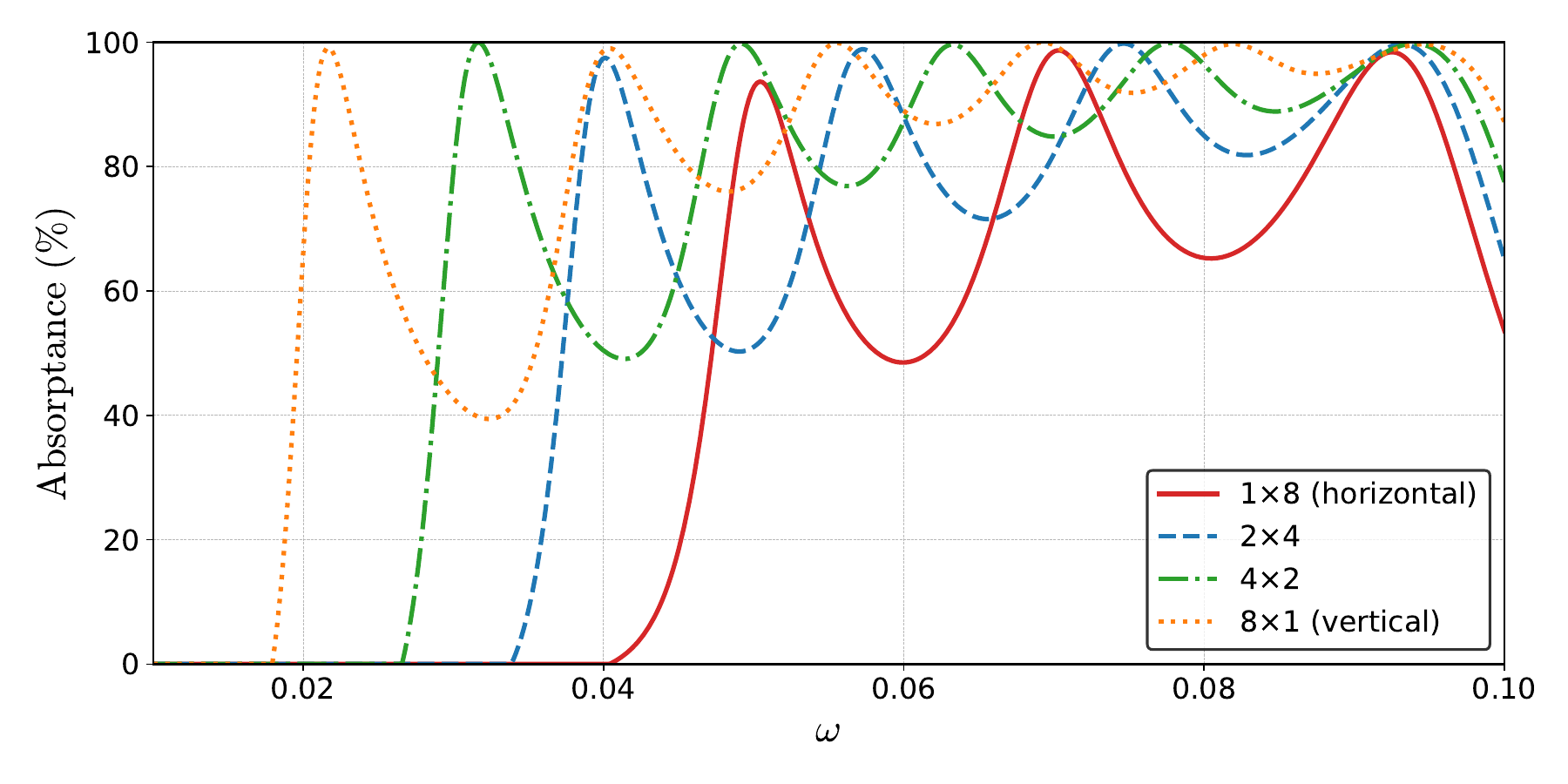}}
    \caption{Reflectance results after shape optimization of $J^\mathrm{ref}$ with different initial shape and grid configurations.}
    \label{fig:shape_opt_comparison_J_ref}
\end{figure}

\begin{figure}[htbp]
    \centering
    \subfloat[Absorptance spectra after optimization of $J^\mathrm{res}$ for a grid of 9 resonators with circles of radius $r$ as the initial shape.]%
    {\includegraphics[width=.90\textwidth]{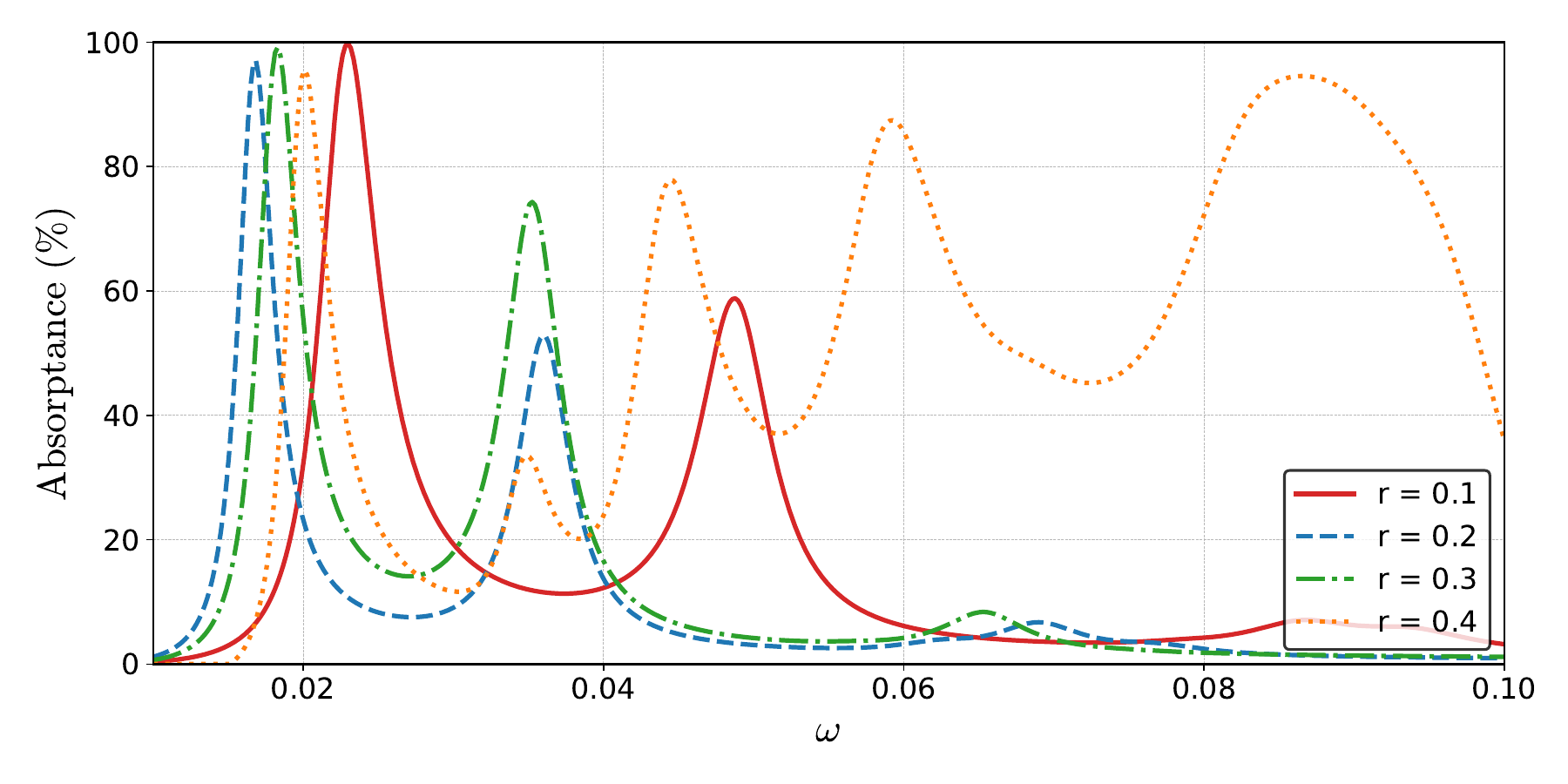}}\\[0.5ex]
    \subfloat[Absorptance spectra after optimization of $J^\mathrm{res}$ for different grid types with 8 resonators. The period is adjusted according to the number of rows $L = 4(8/N_\text{rows} + 1).$]%
    {\includegraphics[width=.90\textwidth]{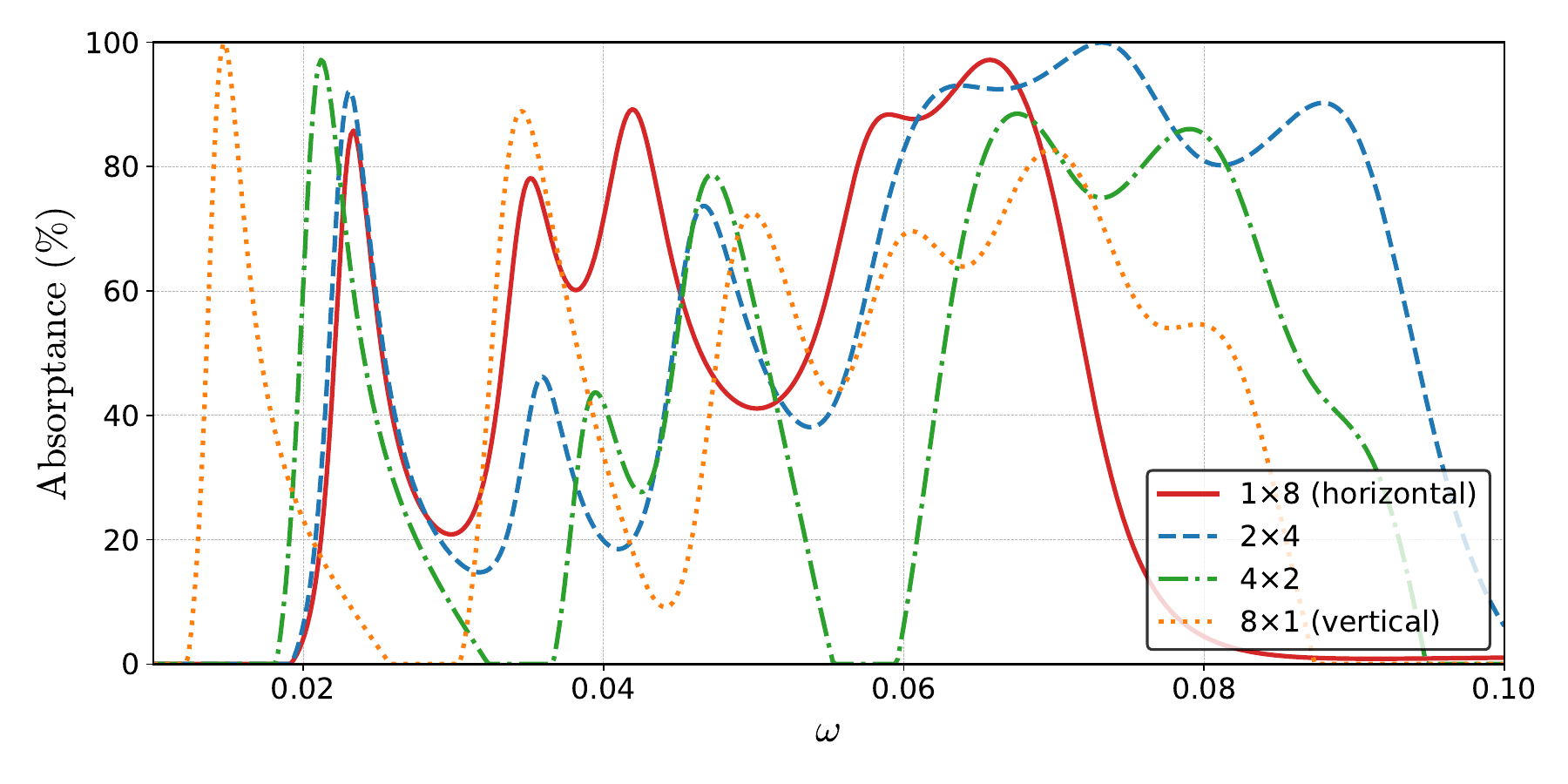}}
    \caption{Reflectance results after shape optimization of $J^\mathrm{res}$ with different initial shape and grid configurations.}
    \label{fig:shape_opt_comparison_J_res}
\end{figure}

\section{Concluding remarks}

In this work,  a reduced order model framework for the broadband optimal design of metascreens with multiple resonators has been developed. In the low-frequency regime, we have established asymptotic formulas for the subwavelength resonant frequencies and for the reflection coefficient. Building on the resulting analytical approximation of the reflection coefficient, we have proposed reduced order objective functionals formulated in terms of the resonant frequencies and the reflection coefficient, which significantly accelerate broadband optimal design. A key feature of the proposed approach is that the dominant computations are frequency independent and therefore need to be carried out only once; the resulting quantities can then be reused efficiently across the entire target frequency band. Finally, to enable gradient-based optimization, we have derived shape derivatives for the relevant eigenvalues and eigenvectors.

This reduced order model framework opens up numerous possibilities for the broadband design of acoustic metascreens based on Helmholtz resonators \cite{hai2015,imerihelmholtz}, as well as electromagnetic metascreens using plasmonic or high-dielectric particles \cite{ruiz2016,bowen2020,bowen2024}. In the present work, the subwavelength resonators are passive; incorporating active elements would provide a route to tunable metascreens \cite{bryn2020,imeritunable1,imeritunable2}. Moreover, future design efforts require a deeper understanding of the fundamental limit between broadband absorption and resonator volume, especially under the physical limits inherent to passive systems. In addition, it is important to develop strategies for selecting better initial designs, since improved initialization can lead to more reliable convergence and higher quality optimal configurations.

%\section*{Acknowledgments}

\begin{appendices}

\crefalias{section}{appendix}

\section{Auxiliary proofs}

\subsection{Proof of \texorpdfstring{\cref{lem: shape derivative for general framework}}{Lemma: shape derivative for general framework}}\label{appdx: proof of shape derivative for general form}

%\subsection{Proof of \cref{lem: shape derivative for general framework}}\label{appdx: proof of shape derivative for general form}

\begin{proof}
    For a general function $w$, we have $\nabla(w \circ T_t) = (DT_t)^{\top}(\nabla w)\circ T_t$, which implies that
\begin{align*}
    J_1(D_t)
    = \int_{Y_{\infty} \setminus \overline{D_t}}
        \nabla u_{t} \cdot \nabla v_{t}\, dx 
    = \int_{Y_{\infty} \setminus \overline{D}}
        (\gamma_t DT_t^{-1} DT_t^{-\top})
        \nabla (u_{t}\circ T_t)\cdot\nabla (v_{t}\circ T_t)\, dx.
\end{align*}
Moreover, using the identity (see \cite{delfour2011shapes})
\begin{align*}
    \frac{d}{dt} \left( \gamma_t DT_t^{-1} DT_t^{-\top} \right)\Big|_{t=0}
    = \operatorname{div}\theta - D\theta - D\theta^{\top},
\end{align*}
differentiating yields
\begin{align*}
    J_1^\prime(D;\theta)
    = \int_{Y_{\infty} \setminus \overline{D}}
    \Big[
        (\operatorname{div}\theta - D\theta - D\theta^{\top})\nabla u
    \Big]\cdot\nabla v
    + \nabla\dot{u}\cdot\nabla v
    + \nabla u\cdot\nabla\dot{v}
    \, dx=\mathrm{I}+\mathrm{II}+\mathrm{III}.
\end{align*}
By a direct calculation, we obtain the following vector identities:
\begin{align*}
&\operatorname{div}((\nabla u\cdot\nabla v)\theta)
    = (\nabla u\cdot\nabla v)\operatorname{div}\theta
      + (\nabla^2 u\,\theta)\cdot\nabla v
      + (\nabla^2 v\,\theta)\cdot\nabla u,\\
&\operatorname{div}((\nabla u \cdot  \theta)\nabla v)
    = (D\theta\,\nabla u)\cdot\nabla v
      + (\nabla^2 u\,\theta)\cdot\nabla v
      + (\nabla u \cdot  \theta)\Delta v,\\
&\operatorname{div}((\nabla v \cdot  \theta)\nabla u)
    = (D\theta\nabla v)\cdot\nabla u
      + (\nabla^2 v\,\theta)\cdot \nabla u
      + (\nabla v \cdot  \theta)\Delta u,
\end{align*}
where $\nabla^2 u$ and $\nabla^2 v$ denote the Hessian matrices. Thus, we obtain
\begin{align*}
    \big[(\operatorname{div}\theta - D\theta - D\theta^{\top})\nabla u\big]
        \cdot \nabla v
    = \operatorname{div}\big((\nabla u\cdot\nabla v)\theta\big)
       - \operatorname{div}\big((\nabla u \cdot  \theta)\nabla v\big)
       - \operatorname{div}\big((\nabla v \cdot  \theta)\nabla u\big).
\end{align*}
Using integration by parts and the divergence theorem, we obtain
\begin{equation}\label{eq: integral of I}
    \mathrm{I}=\int_{\partial D}
    \Big[
        (\nabla u \cdot  \theta) \frac{\partial v}{\partial \nu}+(\nabla v \cdot  \theta)\frac{\partial u}{\partial \nu}-(\nabla u\cdot\nabla v)(\theta\cdot\nu)
    \Big]\Big{|}_{+}\, d\sigma.
\end{equation}
Since $\Delta u =\Delta v=0$ in $Y_{\infty} \setminus \overline{D}$, and $\dot{u} =\nabla f \cdot \velocity$ and $\dot{v} =\nabla g\cdot \velocity$ on $\partial D$, integration by parts gives
\begin{equation}\label{eq: integral of II}
    \mathrm{II}=\int_{Y_{\infty} \setminus \overline{D}}
        \nabla\dot{u} \cdot \nabla v\, dx
    = -\int_{\partial D} \dot{u} \frac{\partial v}{\partial \nu}\Big{|}_{+}\, d\sigma
       - \int_{Y_{\infty} \setminus \overline{D}} \dot{u}\, \Delta v\, dx
    = -\int_{\partial D} (f \cdot \velocity)\frac{\partial v}{\partial \nu}\Big{|}_{+}\, d\sigma,
\end{equation}
\begin{equation}\label{eq: integral of III}
    \mathrm{III}=  \int_{Y_{\infty} \setminus \overline{D}}\nabla u \cdot \nabla \dot{v}\, dx
    = -\int_{\partial D} \dot{v} \frac{\partial u}{\partial \nu}\Big{|}_{+}\, d\sigma
       - \int_{Y_{\infty} \setminus \overline{D}} \dot{v}\, \Delta u\, dx
    = -\int_{\partial D} (g \cdot \velocity)\frac{\partial u}{\partial \nu}\Big{|}_{+}\, d\sigma,
\end{equation}
where we have used the fact that $\nabla u$ and $\nabla v$ decay exponentially as $x_d\rightarrow +\infty$. Combining \eqref{eq: integral of I}, \eqref{eq: integral of II}, and \eqref{eq: integral of III}, we obtain
\begin{align*}
    J_1^\prime(D;\theta) 
    = &\int_{\partial D}
    \Big[
        ((\nabla u-\nabla f) \cdot  \theta )\frac{\partial v}{\partial \nu}
        +((\nabla v-\nabla g) \cdot  \theta )\frac{\partial u}{\partial \nu}
        -(\nabla u\cdot\nabla v)(\theta\cdot\nu)
    \Big]\, d\sigma.
\end{align*}
Finally, using the decomposition of the gradient into tangential and normal components in \eqref{eq: def of tangential gradient}, together with the identities $\nabla_T u=\nabla_T f$ and $\nabla_T v=\nabla_T g$ on $\partial D$, we obtain \eqref{eq: shape derivative for J1}. The proof of \eqref{eq: shape derivative for J2} follows the same argument and is therefore omitted.
\end{proof}

%\subsection{Proof of \cref{thm: shape derivative of eigenvalue}}\label{appdx: shape derivative of eigenvalue}
\subsection{Proof of \texorpdfstring{\Cref{thm: shape derivative of eigenvalue}}{Theorem: shape derivative of eigenvalue}}\label{appdx: shape derivative of eigenvalue}

\begin{proof} 
Note that $v_j=\chi_{\partial D_j}$ and $\nabla_T v_j = 0$ on $\partial D$. Then the shape derivative $C_{ij}'(D;\theta)$ follows directly from \eqref{eq: shape derivative for J1} together with \eqref{eq: def of capacity matrix by PDE}. Moreover, the derivative $V_{ij}'(D;\theta)$ is standard (see, e.g., \cite{delfour2011shapes,sokolowski1992introduction}). The eigenpair $(\lambda_j(D),u_j(D))$ satisfies
\begin{align*}
    C(D)u_j(D) = \lambda_j(D)V(D)u_j(D),\quad
    u_j^{\top}(D)V(D)u_j(D) = 1.
\end{align*}
Differentiating with respect to the domain variation in the direction $\theta$ yields
\begin{align}
\label{eq: derivative of generalized eigenvalue representation}
     &(C-\lambda_jV)u_j^\prime(D;\theta)
   + (C^\prime(D;\theta)-\lambda_jV^\prime(D;\theta))u_j
   = \lambda_j^\prime(D;\theta)Vu_j,\\
   &u_j^{\top}Vu_j^{\prime}(D;\theta)
   = -\frac{1}{2}u_j^\top V^\prime(D;\theta)u_j,
\end{align}
which implies that
\begin{align}
    \lambda_j^\prime(D;\theta)
    &= u_j^{\top}\big(C^\prime(D;\theta)-\lambda_jV^\prime(D;\theta)\big)u_j,\\ 
    u_j^\prime(D;\theta)
    &= \sum_{i\neq j}\frac{u_j^{\top}\big(C^\prime(D;\theta)-\lambda_jV^\prime(D;\theta)\big)u_i}{\lambda_j-\lambda_i} u_i
        -\frac{1}{2}\big(u_j^\top V^\prime(D;\theta) u_j\big)u_j.
\end{align}
According to the definition of $\lambda_{j,1}$ in \eqref{eq: lambdaj1 omega}, its shape derivative is given by
\begin{align*}
    \lambda_{j,1}^\prime(D;\theta)
    = \frac{2\tau_m (m^{\top}  u_j) }{|Y|}\left(u_j^{\top}m^{\prime}(D;\theta)+m^{\top} u^{\prime}_j(D;\theta)\right).
\end{align*}
To compute $m'(D;\theta)$, we rewrite $m_i$ as
\begin{align*}
    m_i
    =-\int_{\partial D} w\Big(\frac{\partial v_i}{\partial \nu}\Big{|}_+
     -\frac{\partial v_i}{\partial \nu}\Big{|}_-\Big)\, \mathrm{d}\sigma
    =\int_{Y_{\infty} \setminus \overline{D}} \nabla w \cdot \nabla v_i \, dx
     +\int_{D} \nabla w \cdot \nabla v_i \, dx,
\end{align*}
and apply \eqref{eq: shape derivative for J12} in \cref{lem: shape derivative for general framework} to obtain $m_i'(D;\theta)$. Finally, the quantities $C_{ij}'(D;\theta)$, $V_{ij}'(D;\theta)$, $\lambda_j'(D;\theta)$, $\lambda_{j,1}'(D;\theta)$, $u_j'(D;\theta)$, and $m'(D;\theta)$ can all be rewritten in Hadamard form, which completes the proof.
\end{proof}

\end{appendices}

%
%%~~~~~~~~~ Bibliography
%\bibliographystyle{unsrt}
\bibliographystyle{siam}
\bibliography{ref}

\end{document}